\newtheorem{theorem}{Theorem}
\newtheorem{corollary}{Corollary}
\newtheorem{proposition}{Proposition}
\newtheorem{lemma}{Lemma}
\newtheorem{example}{Example}
\newtheorem{definition}{Definition}
\def\text#1{\mbox{\rm #1}}
\def\overset#1#2{\stackrel{#1}{#2} }
\def\underwiggle 1{
	\ifmmode\setbox\TempBox=\hbox{$ 1$}\else\setbox\TempBox=\hbox{
		1}\fi \setbox\TempBoxA=\hbox to \wd\TempBox{\hss\char'176\hss}
	\rlap{\copy\TempBox}\smash{\lower9pt\hbox{\copy\TempBoxA}} }
\newcommand{\E}{\mathrm E}
\newcommand{\df}{\,\mathrm d}
\newcommand{\var} {\mbox{var}}
\newcommand{\beq}{\begin{equation}}
	\newcommand{\eeq}{\end{equation}}
\newcommand{\beas}{\begin{eqnarray*}}
	\newcommand{\eeas}{\end{eqnarray*}}
\newcommand{\bea}{\begin{eqnarray}}
	\newcommand{\eea}{\end{eqnarray}}
\newcommand{\bei}{\begin{itemize}}
	\newcommand{\eei}{\end{itemize}}
\newcommand{\ben}{\begin{enumerate}}
	\newcommand{\een}{\end{enumerate}}
\newcommand{\bet}{\begin{theorem}}
	\newcommand{\eet}{\end{theorem}}
\newcommand{\bel}{\begin{lemma}}
	\newcommand{\eel}{\end{lemma}}
\newcommand{\bep}{\begin{proposition}}
	\newcommand{\eep}{\end{proposition}}
\newcommand{\bed}{\begin{definition}}
	\newcommand{\eed}{\end{definition}}
\newcommand{\bec}{\begin{corollary}}
	\newcommand{\eec}{\end{corollary}}
\newcommand{\bex}{\begin{example}}
	\newcommand{\eex}{\end{example}}
\begin{document}
	\title{\textcolor{black}{Regression analysis of longitudinal data with mixed synchronous and asynchronous longitudinal covariates}}
	
	\author{Zhuowei Sun$^1$, Hongyuan Cao$^2$, Li Chen$^3$ and Jason P. Fine$^4$}
	
	\footnotetext[1]{~ School of Mathematics, Jilin University, Changchun 130012, China.}
	\footnotetext[2]{~ Department of Statistics, Florida State University, Tallahassee, FL 32306, U.S.A.}
	\footnotetext[3]{~ Novartis Pharmaceuticals Corp. 1 Health Plaza, East Hanover, NJ, 07936, U.S.A.}
\footnotetext[4]{~ Department of Biostatistics and Department of Statistics and Operations Research, University of North Carolina at Chapel Hill, Chapel Hill, NC 27514, U.S.A.}

\date{}
\maketitle

\begin{abstract}
	In linear models, omitting a covariate that is orthogonal to covariates in the model does not result in biased coefficient estimation. This in general does not hold for longitudinal data, where additional assumptions are needed to get unbiased coefficient estimation in addition to the orthogonality between omitted longitudinal covariates and longitudinal covariates in the model. We propose methods to mitigate the omitted variable bias under weaker assumptions.
	A \textcolor{black}{two-step estimation procedure} is proposed for inference about the asynchronous longitudinal covariates, when such covariates are observed. For mixed synchronous and asynchronous longitudinal covariates, we get parametric rate of convergence for the coefficient estimation of the synchronous longitudinal covariates by the two-step method. Extensive simulation studies provide numerical support for the theoretical findings. We illustrate the performance of our method on dataset from the Alzheimers Disease Neuroimaging Initiative study.
\end{abstract}

\noindent \textbf{Keywords: \/}
Asynchronous longitudinal data analysis; Estimating equations; Last value carried forward; Omitted longitudinal covariate; Rate of convergence.
\thispagestyle{empty}

\section{Introduction}\label{intro.sec}
\setcounter{page}{1}
In linear models, the Frisch-Waugh-Lovell (FWL) theorem states the equivalence of the coefficients from the full and partial regression. Specifically, using projection matrices to make the explanatory variables orthogonal to each other will lead to the same results as running the regression with all non-orthogonal explanators included \citep{ding2021,frisch1933,lovell1963,lovell2008}. In particular, if we omit a variable orthogonal to variables in the model, we get unbiased regression coefficient estimation. Recently, this idea is used to obtain causal treatment effect estimation for longitudinal data \citep{bates2022}. Do similar results hold in longitudinal studies with time-dependent covariates? How do we get unbiased regression coefficient estimation with omitted longitudinal covariate? Furthermore, what if the omitted longitudinal covariate is asynchronous with the longitudinal response and other longitudinal covariates in the model? 

Asynchronous longitudinal data refer to the misalignment of measurement times on two longitudinal processes within an individual. Typical examples arise in the analysis of electronic health records (EHR) data, where patients' health information are collected from multiple sources. EHR may include data on individual's demographics, medication and allergies, immunization status, laboratory test results and billing information, among others. Due to the retrospective nature of EHR, the measurement times are collected at each  clinical encounter, which can be irregular and sparse across patients and asynchronous within patients. Another example comes from the Alzheimer's Disease Neuroimaging Initiative study (ADNI), where cognitive decline metrics, such as Mini-Mental State Examination (MMSE) score, are misaligned with medical imaging measurement, such as log hazard of fractional anisotropy (FA), which reflects fiber density, axonal diameter, and myelination in white matter, within an individual. Typical measurement times of a few patients in this dataset are plotted in Figure \ref{fig0}. We can see that each patient has different number of measurements of MMSE and FA. Furthermore, the measurement times of MMSE and FA are mismatched. \begin{figure}[!ht]
	\includegraphics[width = 4.5in]{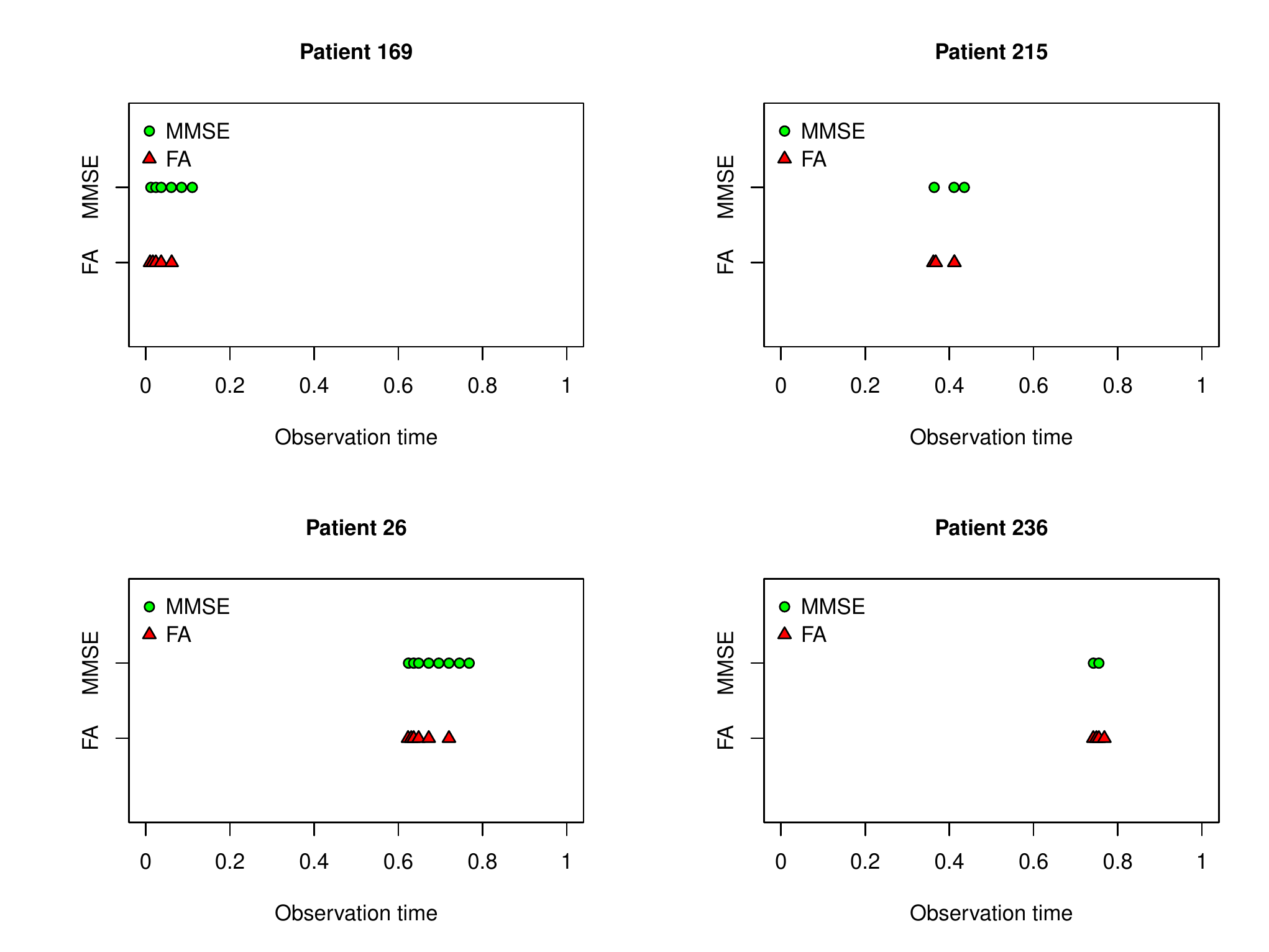}
	\centering \caption{\label{fig0}Examples of individual observations}
\end{figure}

For asynchronous longitudinal data, \cite{xiong10} employed a binning approach to synchronize covariates and response measurements to use exiting methods for classic longitudinal data analysis. \cite{senturk13} explicitly addressed the asynchronous structure for generalized varying-coefficient models with one covariate yet did not provide any theoretical justification. \cite{cao15} proposed a non-parametric weighting approach for generalized linear models with asynchronous longitudinal data and rigorously established inferential strategies. This was extended to a more general set up in \cite{cao16} and partial linear model in \cite{chen17}. Recently, \cite{lizhu2022} studied temporally asynchronous functional imaging data and \cite{sun2021} examined informative measurement times for asynchronous longitudinal data. These approaches assume that all asynchronous longitudinal covariates have the same measurement times, which are asynchronous with longitudinal response. The problem of mixed synchronous and asynchronous longitudinal covariates have not been addressed before. 

In this paper, we propose statistical methods for analyzing mixed synchronous and asynchronous longitudinal covariates. The longitudinal covariates have two sets, one set is measured synchronously with the longitudinal response and another set is measured asynchronously with the longitudinal response. Suppose we are interested in inference on the synchronous longitudinal covariates and treat the asynchronous longitudinal covariates as nuisance. Unlike classic linear models, unbiased regression coefficient estimation of the synchronous longitudinal covariates usually cannot be obtained when omitting the asynchronous longitudinal covariates even if they are uncorrelated unless the omitted asynchronous longitudinal covariates have constant expectation over time. Ignoring this fact and only fitting synchronous longitudinal covariates with the longitudinal response may incur omitted variable bias. 

To mitigate such bias, we can fit synchronous and asynchronous longitudinal covariates simultaneously like that in \cite{cao15}. For synchronous longitudinal covariates, this one-step method implements an unnecessary smoothing, which slows down rate of convergence of the regression coefficient. To improve efficiency, we propose a two-step method. In the first step, we either fit a partial linear model of synchronous longitudinal covariate with respect to the longitudinal response or a linear model with centered synchronous longitudinal covariate and centered longitudinal response, omitting the asynchronous longitudinal covariates. Intuitively, we either absorb the omitted longitudinal covariates through the non-parametric intercept in the partial linear model or get rid of them through centering. We show that parametric rate of convergence can be obtained for the regression coefficient estimation of synchronous longitudinal covariates. In the second step, residuals from the first step are fitted with the asynchronous longitudinal covariates by kernel weighting. It is established that the resulting estimator is consistent, asymptotically normal, and has the same convergence rates as that in \cite{cao15}. 

For analysis of longitudinal data with partial linear models, \cite{fan2004} developed statistical estimation and inference under the corrected specified model whereas we are working with a mis-specified model. \cite{qian11} proposed a centering approach for analysis of classic longitudinal data assuming the model is correctly specified while we are dealing with omitted variable bias and model mis-specification. Moreover, the analysis of mixed synchronous and asynchronous longitudinal covariates and omitted variable analysis for longitudinal data have not been studied before.

The rest of the paper is organized as follows. In Section 2, we elaborate conditions for consistency of the na\"ive estimation of omitting the asynchronous longitudinal covariates. We then propose a partial linear model and a centering approach for consistent estimation of the regression coefficient of the synchronous longitudinal covariate and study the sampling properties of the procedure. In Section 3, we consider a \textcolor{black}{two-step} estimator of the regression coefficient of the asynchronous longitudinal covariate and derive its asymptotic properties and associated inferences. In addition, we derive methods and theory for analyzing synchronous and asynchronous longitudinal covariates simultaneously. In Section 4, we conduct Monte Carlo simulation studies to examine the finite sample performance of the proposed methods. Analysis of dataset from an ADNI study illustrates the methodology in Section 5. Concluding remarks are given in Section 6. All proofs are relegated in the Supplementary Material.

\section{Estimation and inference with omitted longitudinal covariates}
\subsection{A na\"ive approach}\label{2.1}
We first look at the case where mis-specified model is na\"ively analyzed using methods from classic longitudinal data analysis omitting the asynchronous longitudinal covariates. Assume the true model is 
\begin{equation}\label{true-model}
	Y(t)  = \alpha + X(t)^T\beta + Z(t)^T\gamma + \epsilon(t),
\end{equation}
where $Y(t)$ is the longitudinal outcome, $\alpha$ is the intercept, $X(t) \in {\mathbb R}^{p}$ is the observed longitudinal covariates measured synchronously with $Y(t),$ $Z(t) \in {\mathbb R}^{q}$ is the omitted longitudinal covariates, which may be measured asynchronously with $Y(t)$ and $X(t),$ $\beta \in {\mathbb R}^p$ and $\gamma \in {\mathbb R}^q$ are unknown parameters to be estimated, and $\epsilon(t)$ is a mean $0$ stochastic process, uncorrelated with $X(t)$ and $Z(t)$. This is a marginal model, which specifies that the conditional mean of the longitudinal response only depends on the current value of the longitudinal covariates and there is no lagged effect of the longitudinal covariates.   
In this subsection, our interest is on inference about the regression coefficient ${\beta}.$ Since $Z(t)$ is omitted, in practice, we fit the mis-specified model
\begin{equation}\label{mis-model}
	Y(t) = \alpha^\diamond + X(t)^T\beta^\diamond + \epsilon^\diamond(t),
\end{equation}
where $\alpha^\diamond $ is the intercept, $\beta^\diamond \in {\mathbb R}^p$ is the regression coefficient, and $\epsilon^\diamond(t)$ is a mean $0$ stochastic process, uncorrelated with $\alpha^\diamond$ and $X(t).$ This na\"ive practice can negatively impact estimation of ${\beta}$ in the true model (\ref{true-model}). 

Suppose we have a random sample of $n$ subjects and for the $i$th subject, there are $M_i$ longitudinal observations. Denote $Y_{ij} \in {\mathbb R}$ and $X_{ij} \in {\mathbb R}^p$ as the synchronous longitudinal response and covariates observed at times $t_{ij}, i =1, \ldots, n; j = 1, \ldots, M_i.$ 
We minimize the least square error under model (\ref{mis-model})
\begin{equation}
	\sum_{i=1}^{n}\sum_{j=1}^{M_i}(Y_{ij} - \alpha^\diamond - X_{ij}^T\beta^\diamond )^2. \nonumber 
\end{equation}
We have 
\begin{equation}\label{hatbeta}
	\hat{\beta}_n = \Big (\sum_{i=1}^{n}\sum_{j=1}^{M_i}X_{ij}X_{ij}^T \Big)^{-1} \sum_{i=1}^{n}\sum_{j=1}^{M_i}X_{ij}(Y_{ij}-\alpha^\diamond).  
\end{equation}
Taking the expectation, we have
\begin{align}\label{bias}
	E(\hat{\beta}_n) &= E\left\{\Big(\sum_{i=1}^{n}\sum_{j=1}^{M_i}X_{ij}X_{ij}^T \Big)^{-1} \sum_{i=1}^{n}\sum_{j=1}^{M_i}X_{ij}(Y_{ij}-\alpha^\diamond)\right\}\notag\\ 
	& =\beta + E\left\{\Big(\sum_{i=1}^{n}\sum_{j=1}^{M_i}X_{ij}X_{ij}^T \Big)^{-1}\sum_{i=1}^{n}\sum_{j=1}^{M_i}X_{ij}(\alpha +Z_{ij}^T\gamma + \epsilon_{ij}- \alpha^\diamond)\right\}\notag\\
	&=\beta + E\bigg(\Big(\sum_{i=1}^{n}\sum_{j=1}^{M_i}X_{ij}X_{ij}^T \Big)^{-1}\sum_{i=1}^{n}\sum_{j=1}^{M_i}X_{ij}\left[\alpha+E(Z_{ij})^T \gamma-\alpha^\diamond \right.\bigg. \notag\\
	&\bigg.\left.+\{Z_{ij}-E(Z_{ij})\}^T\gamma\right]\bigg)\notag\\
	&=\beta + E\left[\Big(\sum_{i=1}^{n}\sum_{j=1}^{M_i}X_{ij}X_{ij}^T \Big)^{-1} \sum_{i=1}^{n}\sum_{j=1}^{M_i}X_{ij}\left\{\alpha+E(Z_{ij})^T \gamma-\alpha^\diamond\right\}\right]\notag\\
	&+E \left[\Big(\sum_{i=1}^{n}\sum_{j=1}^{M_i}X_{ij}X_{ij}^T \Big)^{-1} \sum_{i=1}^{n}\sum_{j=1}^{M_i}X_{ij} \left\{Z_{ij}-E(Z_{ij})\right\}^T\gamma\right]\notag\\
	&=\beta + I + II.
\end{align} 
In (\ref{bias}), $I$ and $II$ in general do not vanish and $\hat{\beta}$ from (\ref{hatbeta}) is biased. We next show that the na\"ive approach can still work under the following conditions. 

(C1) $E\{ Z(t)\}$ is constant $\forall t.$

(C2) $X(t)$ and $Z(t)$ are uncorrelated $\forall t$. 

\begin{theorem}\label{consis}
	Under (C1) and (C2), estimation of $\beta$ in (\ref{true-model}) under the mis-specified model (\ref{mis-model}) is unbiased. 
\end{theorem}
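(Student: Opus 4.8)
The plan is to argue directly from the bias identity $E(\hat{\beta}_n) = \beta + I + II$ in (\ref{bias}): it suffices to show that (C1) kills term $I$ and (C2) kills term $II$. The most transparent route, which I would take first, is to observe that under (C1) and (C2) the model (\ref{mis-model}) is not really mis-specified. Put $\mu_Z := E\{Z(t)\}$, a constant vector by (C1), and define $\alpha^\diamond := \alpha + \mu_Z^T\gamma$, $\beta^\diamond := \beta$, and $\epsilon^\diamond(t) := \{Z(t) - \mu_Z\}^T\gamma + \epsilon(t)$. Substituting into (\ref{true-model}) reproduces (\ref{mis-model}) as an identity, and $E\{\epsilon^\diamond(t)\} = 0$ while $\Cov\{X(t),\epsilon^\diamond(t)\} = \Cov\{X(t),Z(t)\}\gamma + \Cov\{X(t),\epsilon(t)\} = 0$ by (C2) and the error assumption in (\ref{true-model}). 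So (\ref{mis-model}) is a correctly specified marginal model whose slope is exactly the target $\beta$; in particular (C1) is precisely what lets a \emph{constant} working intercept $\alpha^\diamond$ absorb the omitted mean $E\{Z(t)\}^T\gamma$, and (C2) is what decouples the residual $Z$-fluctuation from $X(t)$.

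Granting this identification, the remaining work is the standard verification that the least-squares estimator (\ref{hatbeta}) --- with $\alpha^\diamond$ understood as the population intercept of (\ref{mis-model}), or equivalently with $(\alpha^\diamond,\beta^\diamond)$ estimated jointly / the data centered --- is unbiased for the slope of a correctly specified marginal model. Writing $\hat{\beta}_n - \beta = \big(\sum_{i,j} X_{ij}X_{ij}^T\big)^{-1}\sum_{i,j} X_{ij}\epsilon^\diamond_{ij}$ and conditioning on the covariate values $\{X_{ij}\}$ (with the measurement schedule treated as ancillary), the inner sum has conditional mean $\sum_{i,j} X_{ij}\,E(\epsilon^\diamond_{ij}\mid\{X_{ij}\}) = 0$, so $E(\hat{\beta}_n\mid\{X_{ij}\}) = \beta$ and hence $E(\hat{\beta}_n) = \beta$ by iterated expectation. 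Equivalently, staying with (\ref{bias}): under (C1) the factor $\alpha + E(Z_{ij})^T\gamma - \alpha^\diamond$ in $I$ is the constant $\alpha + \mu_Z^T\gamma - \alpha^\diamond = 0$, so $I = 0$; and under (C2), $E\big[X_{ij}\{Z_{ij}-E(Z_{ij})\}^T \mid \{X_{ij}\}\big] = X_{ij}\,E\big[\{Z_{ij}-E(Z_{ij})\}^T\mid\{X_{ij}\}\big] = 0$, so $II = 0$.

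The one genuine subtlety --- and the step I expect to require the most care --- is the random matrix inverse $\big(\sum_{i,j}X_{ij}X_{ij}^T\big)^{-1}$ sitting inside the expectations in (\ref{bias}): an expectation of a product of this inverse with a sum does not factor, so mere zero-covariance between $X(t)$ and $Z(t)$ is not literally enough to make the \emph{unconditional} expectations $I$ and $II$ vanish. I would handle this exactly as above, by conditioning on $\{X_{ij}\}$ so that the inverse is frozen and only a conditional-mean statement about $Z_{ij}$ and $\epsilon_{ij}$ given the $X$'s is needed --- which is the natural reading of (C2) and the error assumption in this marginal-model setting, and is implicitly what the manipulation leading to (\ref{bias}) already uses when it drops the $\epsilon_{ij}$ term. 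A fallback that avoids any strengthening of the hypotheses is to read the conclusion as consistency: with $N := \sum_{i=1}^{n} M_i$, a law of large numbers gives that $N^{-1}\sum_{i,j}X_{ij}X_{ij}^T$ converges to a positive-definite limit while, by (C1) and (C2), $N^{-1}\sum_{i,j}X_{ij}\{\alpha - \alpha^\diamond + Z_{ij}^T\gamma + \epsilon_{ij}\} \to 0$, so $\hat{\beta}_n \to \beta$; this needs only mild moment and regularity conditions on $(X_{ij},Z_{ij},\epsilon_{ij})$ and on the observation times.
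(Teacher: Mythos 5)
Your proposal is correct and follows essentially the same route as the paper: re-parametrize (\ref{true-model}) so that, under (C1) and (C2), the working model (\ref{mis-model}) is in fact correctly specified with $\alpha^\diamond = \alpha + E\{Z(t)\}^T\gamma$, $\beta^\diamond=\beta$ and $\epsilon^\diamond(t)=[Z(t)-E\{Z(t)\}]^T\gamma+\epsilon(t)$, and then show that the terms $I$ and $II$ in (\ref{bias}) vanish. The one point where you go beyond the paper is worth keeping: the paper factors $E\bigl[(\sum_{i,j}X_{ij}X_{ij}^T)^{-1}X_{ij}\{Z_{ij}-E(Z_{ij})\}^T\gamma\bigr]$ into a product of expectations without comment, which literal zero correlation does not license because the matrix inverse is a nonlinear function of all the $X$'s; your conditioning on $\{X_{ij}\}$ makes explicit that (C2) must be read as a mean-independence (or independence) condition for $II=0$ to hold exactly, and your consistency fallback covers the case where only zero correlation is available.
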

We remark that the intercept in (\ref{true-model}) cannot be consistently estimated under the mis-specified model (\ref{mis-model}). The proof of Theorem \ref{consis} is relegated in the Supplementary Material. Furthermore, we corroborate Theorem \ref{consis} empirically by simulation studies in Section 4.

\subsection{A partial linear model approach}

For general cases, instead of working with (\ref{mis-model}), we propose to use a partial linear model
\begin{equation}\label{plm}
	Y(t) = \alpha(t) + X(t)^T \beta_p + \epsilon_p(t),
\end{equation}
where $Y(t)$ is the longitudinal response, $\alpha(t)$ is the non-parametric intercept, $X(t) \in {\mathbb R}^p$ is the longitudinal covariate, $\beta_p \in {\mathbb R}^p$ is the regression coefficient and $\epsilon_p(t)$ is a mean $0$ stochastic process, uncorrelated with $\alpha(t)$ and $X(t)$. As shown below, fitting the mis-specified model (\ref{plm}) permits unbiased estimation of $\beta$ in (\ref{true-model}) under weaker conditions than those for the na\"ive estimator in Subsection \ref{2.1}. 

We first define some notations. For the $i$th subject, we observe longitudinal response and covariates 
$\{Y_i(T_{ij}), X_i(T_{ij}) \}, j =1, \ldots, M_i,$
where $T_{ij}, j =1, \ldots, M_i,$ are the observation times for the longitudinal measurements, where $M_i$ is finite with probability $1.$ We use a counting process to represent the observation times. Specifically, 
$N_i(t) = \sum_{j=1}^{M_i}I(T_{ij} \le t)$ counts the number of longitudinal observations up to $t$ 
\citep{cao15,lin01}. We use $t_{ij}$ to denote realized value of $T_{ij}.$

For the estimation of $\alpha(t)$ in (\ref{plm}), \cite{fan2004} proposes to use local linear approximation. Specifically, for $t$ in a neighborhood of $t_0,$ by Taylor expansion, we have
\begin{equation}
	\alpha(t) \approx \alpha(t_0) + \dot{\alpha}(t_0)(t-t_0):= a_0 + a_1(t-t_0), \nonumber 
\end{equation}
where the superscript dot denotes the first order derivative. 
Let $K(\cdot)$ be a kernel function and let $h$ be a bandwidth. We aim to find 
$(\hat{a}_0, \hat{a}_1)$ minimizing
\begin{equation}\label{gee}
	\sum_{i=1}^{n}\sum_{j=1}^{M_i}K_h(t_{ij} - t_0)\{Y_i(t_{ij}) - a_0 - a_1(t_{ij} -t_0) -X_i(t_{ij})^T \beta_p \}^2,
\end{equation}
where $K_h(\cdot) = h^{-1}K(\cdot/h).$ The motivation for this is that we would like to write $\hat{a}_0$ as a function of $\beta_p.$ This is different from the GEE with working independence covariance matrix and the inverse of $K_h(t_{ij}-t_0)$ as the variance function, where interest lies in estimating $a_0, a_1$ and $\beta_p$ jointly \citep{liangzeger86}. 

From (\ref{gee}), we have 
\begin{align}\label{local-linear}
	&\hat{a}_0 =\notag\\
	& \frac{\sum_{i=1}^n \int K_h(t-t_0)\{q_2(t-t_0)-(t-t_0)q_1(t-t_0)\}\{Y_i(t)-X_i(t)^T\beta_p\}dN_i(t)}{\sum_{i=1}^n \int K_h(t-t_0)\{q_2(t-t_0)-(t-t_0)q_1(t-t_0)\} dN_i(t)},
\end{align}
where 
\begin{align*}
	q_l(t-t_0)&=\sum_{i=1}^n\int K_h(t-t_0)(t-t_0)^l dN_i(t), \quad l = 1,2.
\end{align*}

Note that $\hat{a}_0$ is linear in $Y(t) - X(t)^T \beta_p,$ so we can write the estimator of $\beta_p$ in a closed form. This is accomplished through concatenating the longitudinal measurements from the first subject to the last subject into a long vector. Specifically, 
denote $m = \sum_{i=1}^{n}M_i,$ $T^* = (T^*_1, \ldots, T^*_m):= (t_{11}, \ldots, t_{nM_n})^T .$ 
We concatenate other functions of $T^*.$ Denote $\epsilon^* :=\epsilon(T^*)= \{\epsilon(T_1^*), \ldots, \epsilon(T_m^*) \}^T,$ $\alpha^*:=\alpha(T^*)= \{\alpha(T^*_1), \ldots, \alpha(T^*_m)\}^T,$ $X^* =\{X_1^*, \ldots, X^*_m \} := \{X_1(t_{11}),  \ldots, \newline X_n(t_{nM_n}) \}^T$ and $Y^*= \{Y^*_1, \ldots, Y^*_m \}^T := \{ Y_1(t_{11}), \ldots, Y_n(t_{nM_n})\}^T.$ Then $\hat{\alpha}^* = S(Y^* - X^* \beta_p),$ where $Y^* - X^* \beta_p$ is an $m \times 1$ column vector, $S$ is an $m \times m$ symmetric matrix with the $i$th row and $j$th column entry $s_{ij} = w_{ij}(\sum_{j=1}^{m}w_{ij})^{-1},$ where $w_{ij} = K_h(T^*_i - T^*_j)\{q_{i,2} - (T^*_i - T^*_j)q_{i,1}\},$ where $q_{i,l} = \sum_{j=1}^{m}K_h(T^*_i - T^*_j)(T^*_i - T^*_j)^l, l =1, 2.$ As a result, $\hat{\alpha}^*$ is an $m \times 1$ column vector. Marginally, substituting $\hat{\alpha}^*$ into 
$$
Y^* = \alpha^* + X^* \beta_p+ \epsilon^*,
$$
we obtain 
$$
(I-S)Y^* = (I-S)X^*\beta_p + \epsilon^*,
$$
where $I$ is the identity matrix of dimension $m.$ By minimizing the squared error, we have
\begin{equation}
	\hat{\beta}_p = \{X^{*T}(I-S)^T(I-S)X^{*}\}^{-1}X^{*T}(I-S)^T(I-S)Y^*. \nonumber
\end{equation}
With $\hat{\beta}_p$ estimated, we obtain residual vector $\hat{\epsilon}_i$ for the $i$th subject from fitting (\ref{plm}), $i = 1, \ldots, n.$ Define $\hat{C}=\mbox{diag}\{ \hat{\epsilon}_1\hat{\epsilon}_1^T, \ldots, \hat{\epsilon}_n \hat{\epsilon}_n^T\},$ and $\hat{V} =X^{*T}(I-S)^T\hat{C}(I-S)X^*.$ The variance of $\hat{\beta}_p$ can be estimated by $\widehat{\mbox{Var}}(\hat{\beta}_p) = D^{-1}\hat{V}D^{-1},$ where $D = X^{*T}(I-S)^T(I-S)X^*.$ 
We next show theoretical properties of $\hat{\beta}_p.$ In \cite{fan2004}, it has been shown that $\hat{\beta}_p$ is a consistent estimator of $\beta_p$ in (\ref{plm}). 
We shall show that $\hat{\beta}_p$ is consistent for $\beta$ in (\ref{true-model}) and derive its asymptotic distribution. We need the following assumptions.

(C3) $N_i(t)$ is independent of $\{X_i(t), Z_i(t)\}$ for each $t$ and $E\{ dN_i(t)\} = \lambda(t)dt, i=1, \ldots, n.$ Moreover, $\epsilon(t)$ is a mean $0$ process, uncorrelated  with $X(t)$ and $Z(t).$

(C4) $E\int  \{\tilde{X}(t) \tilde{X}(t)^T \}\lambda(t)dt$ is positive definite, where $\tilde X(t)=X(t)-E\{X(t)\}.$ 

(C5) $K(\cdot)$ is a symmetric density function satisfying $\int z K(z)dz = 0, \int z^2 \newline K(z)dz <\infty$ and $\int K(z)^2dz < \infty.$

(C6) $h\rightarrow 0$ and $nh\rightarrow \infty.$

Condition (C3) requires that the observation process and error process are independent of the longitudinal covariate processes. Condition (C4) ensures identifiability. 
Conditions (C5) and (C6) specify valid kernels and  bandwidths. 

The following theorem, which is established in the Supplementary Material, states the asymptotic properties of $\hat{\beta}_p.$
\begin{theorem}\label{thm1}
	Under (C2)-(C6), the asymptotic distribution of $\hat{\beta}_p$ satisifies
	\begin{equation}\label{asym-plm}
		\sqrt{n}(\hat\beta_p-\beta_0) \overset{d}{\rightarrow} N(0,{ A_{\textcolor{black}{\beta_0}}}^{-1}{\Sigma_{\textcolor{black}{\beta_0}}} {A_{\textcolor{black}{\beta_0}}}^{-1}),
	\end{equation} 
	where $\beta_0$ is the true value of $\beta$ in (\ref{true-model}),
	\begin{eqnarray*}
		{A_{\textcolor{black}{\beta_0}}}&=&E\int \{\tilde X(t)\tilde X(t)^T\}\lambda(t)dt,\quad \mbox{and}\\
		{\Sigma_{\textcolor{black}{\beta_0}}}&=&E \Big\{\int \tilde{X}(t) \{\tilde{Z}(t)^T\gamma_0 + \epsilon(t) \}dN(t)\Big\}^{\otimes2}.
	\end{eqnarray*}
\end{theorem}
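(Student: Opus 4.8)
The plan is to reduce the statement to a profile-least-squares expansion in the spirit of \cite{fan2004}, adapted to the counting-process sampling scheme, after first identifying the pseudo-true value of $\beta_p$ in the mis-specified partial linear model (\ref{plm}).

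First I would carry out an identification step. Set $\alpha^\dagger(t)=\alpha+E\{Z(t)\}^T\gamma_0$ and $r(t)=\tilde Z(t)^T\gamma_0+\epsilon(t)$, where $\tilde Z(t)=Z(t)-E\{Z(t)\}$. Substituting the true model (\ref{true-model}) gives $Y(t)=\alpha^\dagger(t)+X(t)^T\beta_0+r(t)$, which is exactly of the form (\ref{plm}) with coefficient $\beta_0$, smooth nonparametric intercept $\alpha^\dagger(t)$, and error $r(t)$. By (C2) and (C3), $E\{r(t)\}=0$ and $E\{\tilde X(t)r(t)\}=\cov\{X(t),Z(t)\}\gamma_0+E\{\tilde X(t)\epsilon(t)\}=0$, so $r(t)$ is a genuine mean-zero error uncorrelated with the centered covariate, and (C4) makes $\beta_0$ the unique minimizer of the associated population criterion. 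Hence the pseudo-true value of $\beta_p$ equals $\beta_0$, and the consistency result of \cite{fan2004} already gives $\hat\beta_p\overset{p}{\rightarrow}\beta_0$.

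Next I would establish the asymptotic expansion. Write $\hat\beta_p-\beta_0=\hat A_n^{-1}\,n^{-1}X^{*T}(I-S)^T(I-S)(Y^*-X^*\beta_0)$ with $\hat A_n=n^{-1}X^{*T}(I-S)^T(I-S)X^*$. Using that the local-linear weights reproduce linear trends exactly, $\sum_j w_{ij}(T^*_j-T^*_i)=0$, together with (C5)--(C6), one shows that $(I-S)X^*$ equals the population-centered design $\tilde X^*=\{X_i(T_{ij})-E(X(T_{ij}))\}$ plus an $O_p(h^2)$ bias and an $O_p\{(nh)^{-1/2}\}$ stochastic term, so $\hat A_n\overset{p}{\rightarrow}A_{\beta_0}$ by the law of large numbers for counting-process averages. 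Splitting $Y^*-X^*\beta_0$ into the smooth part $\alpha^\dagger(T^*)$ and the noise part $r^*=r(T^*)$: the smooth part only contributes the local-linear smoothing bias, each entry of $(I-S)\alpha^\dagger(T^*)$ being $O_p(h^2)$, so its contribution to $n^{-1/2}X^{*T}(I-S)^T(I-S)(\cdot)$ is $O_p(h^2)=o_p(1)$ under (C6); the noise part produces $n^{-1/2}\sum_{i=1}^n\int\tilde X_i(t)r_i(t)\,dN_i(t)+o_p(1)$, the remainder collecting the terms in which $S$ hits $r^*$ or the centered/noise part of $X^*$.

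The hard part is showing that this remainder is $o_p(1)$. Each entry of $Sr^*$ is $O_p\{(nh)^{-1/2}\}$, being a kernel average of a mean-zero process; after conditioning on the observation times, the contributions from distinct subjects vanish by independence across subjects and (C3), while the within-subject contributions are controlled because, with $M_i$ finite, the total smoother weight any row assigns to its own subject is $O_p\{(nh)^{-1}\}$. Bounding the surviving cross terms, such as $n^{-1/2}\sum_{i,j}\tilde X_{ij}(Sr^*)_{ij}$, requires (C5)--(C6) and continuity of the cross-covariance functions $\cov\{X(s),Z(t)\}$ and $\cov\{X(s),\epsilon(t)\}$ near $s=t$, so that over the $O(h)$ length scale of the smoother these stay $O(h)$-close to their contemporaneous values, which vanish by (C2) and (C3); this bookkeeping is the most delicate step. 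Finally, $\xi_i:=\int\tilde X_i(t)\{\tilde Z_i(t)^T\gamma_0+\epsilon_i(t)\}\,dN_i(t)$ are i.i.d.\ with mean zero (by (C2)--(C3)) and covariance $\Sigma_{\beta_0}=E\{\int\tilde X(t)(\tilde Z(t)^T\gamma_0+\epsilon(t))\,dN(t)\}^{\otimes2}$, so the multivariate Lindeberg--L\'evy theorem gives $n^{-1/2}\sum_i\xi_i\overset{d}{\rightarrow}N(0,\Sigma_{\beta_0})$; combining this with $\hat A_n\overset{p}{\rightarrow}A_{\beta_0}$ (symmetric, so $A_{\beta_0}^{-T}=A_{\beta_0}^{-1}$) via Slutsky's theorem yields (\ref{asym-plm}).
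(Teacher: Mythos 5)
Your proposal is correct in substance and reaches exactly the same asymptotic linearization as the paper, namely $\hat\beta_p=\beta_0+A_{\beta_0}^{-1}\,n^{-1}\sum_i\int\tilde X_i(t)\{\tilde Z_i(t)^T\gamma_0+\epsilon_i(t)\}\,dN_i(t)+o_p(n^{-1/2})$, followed by the i.i.d.\ CLT and Slutsky. The route differs in step two. The paper profiles out the intercept and works with the loss function: it cites \cite{fan92} for $\hat\alpha(t;\beta)-\alpha(t;\beta)=O_p\{h^2+(nh)^{-1/2}\}$ with $\alpha(t;\beta)=\alpha_0+E\{Z(t)\}^T\gamma_0-E\{X(t)\}^T(\beta-\beta_0)$, decomposes $l_n(\beta)=I_{n,1}+I_{n,2}+I_{n,3}$, disposes of $I_{n,2},I_{n,3}$ as $O_p\{h^4+(nh)^{-1}\}$ in one line, and reads off the expansion by minimizing the explicit quadratic $I_{n,1}$. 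You instead work directly with the closed-form linear-smoother representation and decompose the action of $I-S$ on the design, the smooth part $\alpha^\dagger(T^*)$, and the noise $r^*$. The two are equivalent in content (your identification step is literally the paper's reparameterization of the intercept), but the trade-off is visible: the paper's objective-function route hides the cross terms inside $I_{n,2}$ and leans on the cited uniform rate, while your hat-matrix route surfaces terms like $n^{-1/2}\tilde X^{*T}Sr^*$ explicitly and forces you to separate cross-subject contributions (killed by independence) from within-subject ones (killed by the $O_p\{(nh)^{-1}\}$ self-weight and $M_i<\infty$). Your observation that controlling these terms requires the cross-covariances $\mathrm{cov}\{X(s),Z(t)\}$ and $\mathrm{cov}\{X(s),\epsilon(t)\}$ to behave well \emph{near} $s=t$, not only at $s=t$, is a genuine point: (C2) and (C3) as stated are contemporaneous, and the paper's proof does not make this bookkeeping explicit either, so your sketch is, if anything, more candid about where the delicate step lies. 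Neither write-up fully nails the remainder bound under the weak condition $nh\to\infty$ alone, but yours is at the same level of rigor as the paper's and identifies the same limiting pair $(A_{\beta_0},\Sigma_{\beta_0})$.
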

This result is different from that in \cite{fan2004} as it is derived under the working model (\ref{plm}) to make inference of parameters in the true model (\ref{true-model}). 

We further make connections between the asymptotic variance of $\hat\beta_p$, $\Sigma = {A_{\textcolor{black}{\beta_0}}}^{-1}{\Sigma_{\textcolor{red}{\beta_0}}}{A_{\textcolor{red}{\beta_0}}}^{-1}$, and its estimator, $D^{-1}\hat VD^{-1}$. Specifically, $(I-S)X^*$ can be viewed as a realization of $\tilde X$, and its inner product $D=X^{*T}(I-S)^T(I-S)X^*$ reflects ${A_{\textcolor{red}{\beta_0}}}$, the integral with respect to the intensity function of the counting process $\lambda(t)dt$. Additionally $\hat{\epsilon}_i$ of $\hat C$ in $\hat V$ corresponds to $\tilde Z_i(t)^T\gamma_0+\epsilon_i(t)$ in $\Sigma_\beta$ as both are based on the residuals with $X_i(t)$ as the covariate, $i=1, \ldots, n.$

\subsection{A centering approach}

The partial linear model approach uses a non-parametric function to model the omitted longitudinal covariate $Z(t)$ and its effect. A different idea is to eliminate $Z(t)$ from the model by centering $X(t)$ and $Y(t).$ Specifically, we first take the unconditional expectation of (\ref{true-model}):
\begin{equation}
	E\{ Y(t)\} = \alpha + E\{X(t) \}^T\beta + E\{Z(t)\}^T \gamma, \nonumber
\end{equation}
and subtract it from (\ref{true-model}):
\begin{equation}\label{transform-model}
	E\{Y(t) \mid X(t), Z(t)\} - E \{Y(t) \} = \Big[ X(t) -E\{X(t) \}\Big]^T \beta + \Big[Z(t) -E\{Z(t) \} \Big]^T\gamma.
\end{equation}
By (C2), taking expectation conditional on $X(t)$ only, (\ref{transform-model}) becomes
\begin{equation}\label{center-model}
	E\{\tilde{Y}(t) \mid \tilde{X}(t)\} = \tilde{X}(t)^T\beta, 
\end{equation}
where $\tilde{Y}(t) = Y(t) - E\{ Y(t)\}.$ The estimate of $\beta$ can be obtained through the usual linear model analysis. Our motivation and set up are very different from \cite{qian11}, where a similar centering approach is developed for analysis of classic longitudinal data. 

Note that in (\ref{center-model}), the unknown mean processes $E\{Y(t)\}$ and $E\{ X(t)\}$ need to be estimated. 
This can be achieved through Nadaraya-Watson estimator \citep{nadaraya64,watson64}. Denote $m_Y(t) = E\{ Y(t)\}$ and $m_X(t)=E\{X(t)\}.$ We have
\begin{align*}
	\hat m_Y(t_0)&=\frac{\sum_{i=1}^n\int K_h(t-t_0)Y_i(t)d N_i(t)}{\sum_{i=1}^n\int K_h(t-t_0)dN_i(t)}  \quad \mbox{and} \\
	\hat m_X(t_0)&=\frac{\sum_{i=1}^n\int K_h(t-t_0)X_i(t)d N_i(t)}{\sum_{i=1}^n\int K_h(t-t_0)d N_i(t)}, 
\end{align*}
where $K_h(\cdot) = h^{-1}K(\cdot/h),$ $K(\cdot)$ is a kernel function and $h$ is the bandwidth. Let $\hat{Y}_i(t) = Y_i(t) - \hat{m}_Y(t)$ and $\hat{X}_i(t) = X_i(t) - \hat{m}_X(t).$ The estimating equation for $\beta$ is
\begin{equation}\label{ee}
	U(\beta) = n^{-1}\sum_{i=1}^{n}\int \hat{X}_i(t)\{ \hat{Y}_i(t) - \hat{X}_i(t)^T\beta\}dN_i(t).
\end{equation}
Solving (\ref{ee}), we obtain 
\begin{equation}
	\hat{\beta}_c = \Big \{\sum_{i=1}^{n} \int \hat{X}_i(t)\hat{X}_i(t)^TdN_i(t) \Big\}^{-1} \sum_{i=1}^{n}\int \hat{X}_i(t)\hat{Y}_i(t)dN_i(t). \nonumber 
\end{equation}
We shall show that $\hat{\beta}_c$ is a consistent estimator of the true $\beta$ in (\ref{true-model}) and establish its limiting distribution. 
Note that
$$
\hat{\beta}_c - \beta_0 = \Big \{\sum_{i=1}^{n} \int \hat{X}_i(t)\hat{X}_i(t)^TdN_i(t) \Big\}^{-1} \sum_{i=1}^{n}\int \hat{X}_i(t)\{ \hat{Y}_i(t)-\hat{X}_i^T\beta_0 \}dN_i(t).
$$
Therefore, the variance of $\hat{\beta}_c$ can be estimated with the sandwich formula:
\begin{eqnarray*}
	\widehat{\mbox{Var}}(\hat{\beta}_c) &=& \{\sum_{i=1}^{n}\int\hat{X}_i(t)\hat{X}_i(t)^TdN_i(t) \}^{-1} \\
	&&\sum_{i=1}^{n}\Big[ \int \hat{X}_i(t)\{\hat{Y}_i(t)-\hat{X}_i(t)^T\hat{\beta}_c\} dN_i(t)\Big]^{\otimes 2}\\
	&&\{\sum_{i=1}^{n}\int\hat{X}_i(t)\hat{X}_i(t)^TdN_i(t) \}^{-1}.
\end{eqnarray*}

We need an additional smoothness assumption specified in (C7) below.

(C7) $E\{X(t) \}$ and $E\{Y(t) \}$ are continuous functions for any $t.$ 

The following theorem, which is established in the Supplementary Material, states the asymptotic properties of $\hat{\beta}_c.$
\begin{theorem} \label{thm2}
	Under (C2)-(C7), the asymptotic distribution of $\hat{\beta}_c$ satisfies
	\begin{equation}
		\sqrt{n}(\hat{\beta}_c - \beta_0) \overset{d}{\rightarrow} N(0, {A_{\textcolor{black}{\beta_0}}}^{-1}{\Sigma_{\textcolor{black}{\beta_0}}} {A_{\textcolor{black}{\beta_0}}}^{-1}),
	\end{equation}
	where ${A_{\textcolor{black}{\beta_0}}}$ and ${\Sigma_{\textcolor{black}{\beta_0}}}$ are the same as those in Theorem \ref{thm1}.
\end{theorem}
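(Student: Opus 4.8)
The plan is to exploit the closed form $\hat\beta_c - \beta_0 = \hat A_n^{-1}\hat B_n$, where $\hat A_n = n^{-1}\sum_{i=1}^n\int \hat X_i(t)\hat X_i(t)^T dN_i(t)$ and $\hat B_n = n^{-1}\sum_{i=1}^n\int \hat X_i(t)\{\hat Y_i(t) - \hat X_i(t)^T\beta_0\}dN_i(t)$, and to show $\hat A_n \overset{p}{\rightarrow} A_{\beta_0}$ and $\sqrt n\,\hat B_n \overset{d}{\rightarrow} N(0,\Sigma_{\beta_0})$; the conclusion then follows from Slutsky's theorem and (C4), with consistency a byproduct of $\sqrt n(\hat\beta_c - \beta_0) = O_p(1)$. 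As a preliminary I would record, under (C3) and (C5)--(C7), the uniform rate $\sup_t\|\hat m_X(t) - m_X(t)\| \vee \sup_t|\hat m_Y(t) - m_Y(t)| = O_p\{(nh)^{-1/2} + h^2\}$ over the support of the observation times, on which $\lambda$ is bounded away from zero, together with the standard split of each Nadaraya--Watson error into a deterministic $O(h^2)$ bias and a mean-zero stochastic term that is itself an $n^{-1}$-average over subjects; write $\Delta_X = \hat m_X - m_X$ and $R = (\hat m_Y - m_Y) - \Delta_X^T\beta_0$.

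For $\hat A_n$, substituting $\hat X_i = \tilde X_i - \Delta_X$ with $\tilde X_i(t) = X_i(t) - m_X(t)$, the two cross terms are bounded by $\|\Delta_X\|_\infty$ times an $O_p(1)$ quantity, hence $o_p(1)$, and the leading term $n^{-1}\sum_i\int\tilde X_i(t)\tilde X_i(t)^T dN_i(t)$ converges in probability to $A_{\beta_0} = E\int\tilde X(t)\tilde X(t)^T\lambda(t)dt$ by the law of large numbers and (C3). For $\hat B_n$, the key algebraic step is to use $m_Y(t) = \alpha + m_X(t)^T\beta_0 + E\{Z(t)\}^T\gamma_0$ to obtain the identity $\tilde Y_i(t) - \tilde X_i(t)^T\beta_0 = \tilde Z_i(t)^T\gamma_0 + \epsilon_i(t) =: W_i(t)$, where $\tilde Z_i(t) = Z_i(t) - E\{Z(t)\}$; note that (C1) is not needed here, since centering removes $Z$ unconditionally. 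Then $\hat Y_i(t) - \hat X_i(t)^T\beta_0 = W_i(t) - R(t)$, and $\hat B_n$ expands as the oracle term $U_0 := n^{-1}\sum_i\int\tilde X_i(t)W_i(t)dN_i(t)$ minus $n^{-1}\sum_i\int\tilde X_i(t)R(t)dN_i(t)$, minus $n^{-1}\sum_i\int\Delta_X(t)W_i(t)dN_i(t)$, plus $n^{-1}\sum_i\int\Delta_X(t)R(t)dN_i(t)$. By (C2) and (C3) one has $E[\tilde X(t)W(t)dN(t)] = \{E[\tilde X(t)\tilde Z(t)^T]\gamma_0 + E[\tilde X(t)\epsilon(t)]\}\lambda(t)dt = 0$, so $\sqrt n\,U_0$ is a normalized sum of i.i.d. mean-zero vectors and, by the multivariate central limit theorem, converges to $N(0,\Sigma_{\beta_0})$ with $\Sigma_{\beta_0} = E\{\int\tilde X(t)W(t)dN(t)\}^{\otimes 2}$, matching Theorem \ref{thm1}.

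The main obstacle is showing that $\sqrt n$ times each of the three remainder terms is $o_p(1)$. Writing $\mu_n(dt) = n^{-1}\sum_i\tilde X_i(t)dN_i(t)$ and $\nu_n(dt) = n^{-1}\sum_i W_i(t)dN_i(t)$, the first two remainders are $\int R(t)\mu_n(dt)$ and $\int\Delta_X(t)\nu_n(dt)$; crucially $\mu_n$ and $\nu_n$ are mean-zero random measures by (C3) and by $E\tilde X(t) = 0$, $E\tilde Z(t) = 0$, so pairing the deterministic $O(h^2)$ bias parts of $R$ and $\Delta_X$ with these $O_p(n^{-1/2})$ measures gives $O_p(h^2 n^{-1/2}) = o_p(n^{-1/2})$. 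The delicate contributions are the stochastic-times-stochastic pieces: here I would express the stochastic parts of $R$ and $\Delta_X$ as $n^{-1}$-sums of kernel-weighted subject terms, so the products become partially degenerate double sums over subjects; the off-diagonal parts are controlled by degenerate $U$-statistic bounds, while the diagonal parts are again mean-zero $n^{-1}$-averages because $E[W(t)\tilde X(t)] = 0$ by (C2)--(C3), yielding bounds of order $(nh)^{-1}n^{-1/2}$ up to logarithmic factors, which is $o_p(n^{-1/2})$ under (C6); the third remainder $\int\Delta_X(t)R(t)\{n^{-1}\sum_i dN_i(t)\}$ is handled by the same decomposition. Uniform control of the kernel-weighted empirical processes that appear, via maximal inequalities, together with the $U$-statistic bookkeeping, is the technical crux; the remaining steps parallel the proof of Theorem \ref{thm1}, and consistency of the sandwich variance estimator follows by replacing the population quantities in $A_{\beta_0}$ and $\Sigma_{\beta_0}$ with their empirical analogues and reusing the same uniform bounds.
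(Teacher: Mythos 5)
Your proposal is correct, but it takes a genuinely different route from the paper. The paper works with the estimating equation $U(\beta)$ of (2.11), asserts (citing Fan, 1992) that replacing the Nadaraya--Watson estimates $\hat m_X,\hat m_Y$ by their targets perturbs $U(\beta)$ only by $o_p(1)$, so that $U(\beta)=U^*(\beta)+o_p(1)$ with $U^*$ the oracle centered equation, then proves consistency via the convexity lemma and asymptotic normality via a Donsker-class decomposition of $\sqrt{n}\,U^*(\beta)$ into an empirical-process term and a drift term $-\sqrt{n}A_{\beta_0}(\beta-\beta_0)$, finishing with the CLT applied to the i.i.d.\ summands $\phi_i=\int\tilde X_i(t)\{\tilde Z_i(t)^T\gamma_0+\epsilon_i(t)\}dN_i(t)$. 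You instead exploit the fact that $\hat\beta_c$ is linear in the data, writing $\hat\beta_c-\beta_0=\hat A_n^{-1}\hat B_n$ and proving $\hat A_n\overset{p}{\rightarrow}A_{\beta_0}$ and $\sqrt{n}\,\hat B_n\overset{d}{\rightarrow}N(0,\Sigma_{\beta_0})$ directly, so no convexity lemma or Donsker argument is needed; your oracle term $U_0$ coincides with the paper's $n^{-1}\sum_i\phi_i$ and yields the same $\Sigma_{\beta_0}$. The substantive difference is that you make explicit the step the paper glosses over: for the $\sqrt{n}$-asymptotics one needs the plug-in error in $\hat B_n$ to be $o_p(n^{-1/2})$, not merely the $o_p(1)$ the paper records, and your decomposition into bias-times-mean-zero-measure pieces of order $O_p(h^2 n^{-1/2})$ and degenerate stochastic-times-stochastic double sums controlled by $U$-statistic bounds is exactly the bookkeeping required to justify this. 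The price of that rigor is the need for uniform (rather than pointwise) rates for $\hat m_X$ and $\hat m_Y$, hence an intensity $\lambda$ bounded away from zero on the support of the observation times --- a mild condition that the paper's argument implicitly requires as well.
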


We note that $\hat{\beta}_p$ and $\hat{\beta}_c$ are asymptotically unbiased, obtain parametric root $n$ rate of convergence and have the same limiting variance. This suggests that the newly proposed two estimators should perform similarly in practice. Simulation studies reported in Section 4 further substantiate these theoretical findings. It is counter-intuitive that we get an efficient estimation of $\beta_0$ with less information, as information contained in $Z(t)$ are not used. This is due to the key assumption (C2). 
When (C2) is violated, we conjecture similar strategies of FWL theorem would work, with slower rate of convergence due to the asynchronous nature of $Z(t).$ 

Both partial linear model approach and centering approach require a bandwidth for smoothing. We experimented various values of bandwidths in the allowable range in the simulation studies and the results are fairly robust to the choice of bandwidth. In practice, one may use cross validation to select bandwidth. In terms of computation, centering is faster. The trade off is that partial linear model approach allows us to get a sense of the omitted $Z(t)$ through the estimated non-parametric intercept term. 
\section{Estimation and inference of asynchronous longitudinal covariates} 
\subsection{A two-step method}
In this section, we consider the case that longitudinal covariates $X(t)$ and $Z(t)$ are asynchronous and longitudinal response $Y(t)$ is observed in alignment with $X(t).$ There is no existing literature to deal with such mixed synchronous and asynchronous longitudinal covariates. To be specific, suppose we have a random sample of $n$ subjects. For subject $i=1, \ldots, n$,
$N_i(t,s) = \sum_{j=1}^{M_i}\sum_{k=1}^{L_i}I(t_{ij} \le t, s_{ik} \le s)$
counts the number of observation times up to $t$ on $X(\cdot)$ and $Y(\cdot)$ and up to $s$ on $Z(\cdot),$ where $t_{ij}, j =1, \ldots, M_i$ are the observation times of $X(\cdot)$ and $Y(\cdot)$ and $s_{ik}, k =1, \ldots, L_i$ are the observation times of $Z(\cdot).$ Denote $E\{d N_i(t,s)\} = \eta(t,s)dtds, i =1, \ldots, n.$
We propose a two-step approach to estimate $\beta$ and $\gamma$ in (\ref{true-model}). 

Step 1: Regress longitudinal response $Y(t)$ on synchronous longitudinal covariate $X(t)$ to get $\hat{\beta}$ and the residuals. 

Step 2. Regress residuals from Step 1 on asynchronous longitudinal covariate $Z(t)$ to estimate $\hat{\gamma}$. 

In Step 1, either partial linear model approach or centering approach can be used as they have the same asymptotic distribution. Once $\hat{\beta}$ is obtained, we compute the residual $\hat{\omega}_i(T_{ij}) = Y_i(T_{ij}) - X_i(T_{ij})^T\hat{\beta}.$ 
In Step 2, to estimate $\gamma,$
we propose the following estimating equation \citep{cao15}
\begin{equation}\label{ee-asyn}
	U^f (\gamma) = n^{-1}\sum_{i=1}^{n}\iint K_h(t-s)Z_i(s)\{Y_i(t) - Z_i(s)^T\gamma - X_i(t)^T\hat{\beta} \}dN_i(t,s),
\end{equation}
where $K_h(t) = K(t/h)/h, K(t)$ is a symmetric kernel function, usually taken to be the Epanechnikov kernel $K(t) = 0.75(1-t^2)_{+}$ and $h$ is the bandwidth.
Solving $U^f(\gamma) = 0,$ we obtain
\begin{align*}
	\hat\gamma&=\left\{\sum_{i=1}^n\iint K_h(t-s) Z_i(s) Z_i(s)^{T}d N_i(t,s)\right\}^{-1}\\
	&\times \sum_{i=1}^n\iint K_h(t-s) Z_i(s)\{ Y_i(t)- X_i(t)^T\hat\beta \}dN_i(t,s).
\end{align*}

The idea of the two-step approach is intuitive. Note (\ref{true-model}) can be written as $Y(t) - X(t)^T\beta = Z(t)^T\gamma + \epsilon(t),$ where we abuse notation by absorbing intercept $\alpha$ into $\beta$ and letting the first entry of vector $X_{ij}$ to be 1. Once we get $\hat{\beta},$ the estimation of $\gamma$ can proceed as an asynchronous regression problem with residual $Y(t) - X(t)^T\hat{\beta}$ as the new response. 

We next present asymptotic properties of $\hat{\gamma}.$ Denote $\sigma^2(t) = \mbox{Var}\{ \epsilon(t)\}$ and let $\gamma_0$ be the true regression coefficient. We need the following conditions.

(C8) $\eta(t,s)$ is twice continuously differentiable for $(t,s) \in [0,1]^{\otimes 2}.$ Moreover, For $t_1 \ne t_2,s_1\ne s_2$, $P \{dN(t_1,s_1) = 1 \mid N(t_2,s_2) - N(t_2-,s_2-) =1\} = f(t_1, t_2,s_1,s_2)dt_1d s_1$, where $f(t_1,t_2,s_1,s_2)$ is continuous for $t_1 \ne t_2,s_1\ne s_2$, and $f\{t_1\pm, t_2\pm,s_1\pm,s_2\pm\}$ exists.

(C9) $E\{Z(t)Z(s)^T \}$ is twice continuously differentiable for $(t,s) \in [0,1].$ In addition, \newline
$\int E \{Z(s) Z(s)^T \} \eta(s,s)ds$ is positive definite and
$$
\|\int E\{Z(s) Z(s)^T \}\eta(s,s)\sigma^2(s)ds\|_{\infty} < \infty,
$$
where for a square matrix $A,$  $\|A\|_{\infty} = \max_{1 \le i \le n}\sum_{j=1}^n |a_{ij}|.$

(C10) $nh\rightarrow \infty$ and $nh^5 \rightarrow 0.$ 

The following theorem states the asymptotic properties of $\hat{\gamma}.$
\begin{theorem} \label{thm3}
	Under (C3)-(C5),(C7)-(C10), the asymptotic distribution of $\hat{\gamma}$ satisfies 
	\begin{equation}
		\sqrt{nh}(\hat{\gamma} - \gamma_0) \overset{d}{\rightarrow}N(0, A_{\textcolor{black}{\gamma_0}}^{-1}\Sigma_{\textcolor{black}{\gamma_0}} A_{\textcolor{black}{\gamma_0}}^{-1}),
	\end{equation}
	where
	\begin{align*}
		A_{\textcolor{black}{\gamma_0}}&=\int E\left\{ Z(s)Z(s)^T\right\}\eta(s,s)ds\quad\text{and}\\
		\Sigma_{\textcolor{black}{\gamma_0}}&= \int K(z)^2dz\int E\left\{Z(s)Z(s)^T\right\} \eta(s,s)\sigma^2(s) ds.
	\end{align*}
\end{theorem}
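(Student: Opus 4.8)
The plan is to linearize $\hat\gamma$ about $\gamma_0$ and to show that both the first-step estimation error and the asynchronicity bias are of strictly smaller order than the nonparametric rate $(nh)^{-1/2}$, so that a single stochastic term drives the limit. Using the convention $Y_i(t)-X_i(t)^T\beta_0=Z_i(t)^T\gamma_0+\epsilon_i(t)$ and writing $\hat A_n=n^{-1}\sum_{i=1}^n\iint K_h(t-s)Z_i(s)Z_i(s)^T\,dN_i(t,s)$, the estimating equation $U^f(\hat\gamma)=0$ gives $\hat\gamma-\gamma_0=\hat A_n^{-1}(\Psi_A+\Psi_B+\Psi_C)$ with
\begin{align*}
\Psi_A &= n^{-1}\sum_{i=1}^n\iint K_h(t-s)Z_i(s)\epsilon_i(t)\,dN_i(t,s)=:n^{-1}\sum_{i=1}^n\xi_i,\\
\Psi_B &= n^{-1}\sum_{i=1}^n\iint K_h(t-s)Z_i(s)\{Z_i(t)-Z_i(s)\}^T\gamma_0\,dN_i(t,s),\\
\Psi_C &= \Big\{n^{-1}\sum_{i=1}^n\iint K_h(t-s)Z_i(s)X_i(t)^T\,dN_i(t,s)\Big\}(\beta_0-\hat\beta).
\end{align*}
First I would establish two laws of large numbers for kernel-weighted counting-process integrals: using $E\{dN_i(t,s)\}=\eta(t,s)\,dt\,ds$, the continuity in (C8)--(C9), and the substitution $u=(t-s)/h$, both $\hat A_n$ and the matrix in braces in $\Psi_C$ converge in probability (their variances being $O\{(nh)^{-1}\}$) to $A_{\gamma_0}=\int E\{Z(s)Z(s)^T\}\eta(s,s)\,ds$ and to $\int E\{Z(s)X(s)^T\}\eta(s,s)\,ds$, respectively; positive-definiteness of $A_{\gamma_0}$ in (C9) then gives $\hat A_n^{-1}=A_{\gamma_0}^{-1}+o_p(1)$.

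The core of the argument is the central limit theorem $\sqrt{nh}\,\Psi_A\overset{d}{\rightarrow}N(0,\Sigma_{\gamma_0})$. Conditioning on the covariate and observation paths and using (C3) (mean-zero $\epsilon$, uncorrelated with $X$, $Z$ and $N$), each $\xi_i$ has mean zero, so $\mathrm{Var}(\sqrt{nh}\,\Psi_A)=h\,E(\xi_1\xi_1^T)$. I would decompose the product measure $dN_i(t,s)\,dN_i(t',s')$ into a diagonal part (the same observation point) and an off-diagonal part governed by the second-order intensity $f$ of (C8). The diagonal part equals $\iint K_h(t-s)^2\,E\{Z_1(s)Z_1(s)^T\sigma^2(t)\}\eta(t,s)\,dt\,ds$, which after $u=(t-s)/h$ equals $h^{-1}\int K(z)^2dz\int E\{Z(s)Z(s)^T\}\eta(s,s)\sigma^2(s)\,ds+o(h^{-1})=h^{-1}\Sigma_{\gamma_0}+o(h^{-1})$; the off-diagonal part is $O(1)$, since the two kernel factors confine a four-dimensional integral to a set of measure $O(h^2)$ while contributing $h^{-2}$ and $f$ is bounded. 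Hence $h\,E(\xi_1\xi_1^T)\to\Sigma_{\gamma_0}$. Since $\sqrt{nh}\,\Psi_A=n^{-1/2}\sum_{i=1}^n(\sqrt h\,\xi_i)$ is a normalized sum of i.i.d. mean-zero vectors with covariance converging to $\Sigma_{\gamma_0}$, it remains to verify a Lyapunov condition: a bookkeeping of the number $P$ of observation pairs with $|t_{1j}-s_{1k}|=O(h)$ (which, by (C8), has all moments $O(h)$) gives $E\|\sqrt h\,\xi_1\|^{2+\delta}=O(h^{-\delta/2})$, so that $n^{1-(2+\delta)/2}E\|\sqrt h\,\xi_1\|^{2+\delta}=O\{(nh)^{-\delta/2}\}\to0$ by (C10).

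It then remains to show $\Psi_B$ and $\Psi_C$ are $o_p\{(nh)^{-1/2}\}$. For $\Psi_B$, a second-order Taylor expansion of $E\{Z(s)Z(t)^T\}$ about $t=s$ together with $\int zK(z)\,dz=0$ in (C5) gives $E(\Psi_B)=O(h^2)$, while $E\|Z_i(t)-Z_i(s)\|^2=O\{(t-s)^2\}$ on the support of $K_h(t-s)$ (from the smoothness in (C9)) makes $\mathrm{Var}(\sqrt{nh}\,\Psi_B)\to0$; hence $\sqrt{nh}\,\Psi_B=O_p(\sqrt{nh^5})+o_p(1)=o_p(1)$ by (C10). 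For $\Psi_C$, root-$n$ consistency of the first-step estimator, $\hat\beta-\beta_0=O_p(n^{-1/2})$ (Theorem~\ref{thm1} or Theorem~\ref{thm2}), together with the $O_p(1)$ matrix in braces, gives $\Psi_C=O_p(n^{-1/2})$ and thus $\sqrt{nh}\,\Psi_C=O_p(\sqrt h)=o_p(1)$ --- this is precisely where the two-step construction pays off, the parametric first-step error being of strictly smaller order than the nonparametric rate. Combining with $\hat A_n^{-1}\overset{p}{\rightarrow}A_{\gamma_0}^{-1}$ and Slutsky's theorem yields $\sqrt{nh}(\hat\gamma-\gamma_0)\overset{d}{\rightarrow}N(0,A_{\gamma_0}^{-1}\Sigma_{\gamma_0}A_{\gamma_0}^{-1})$.

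The main obstacle is the variance computation for $\Psi_A$: one must carefully split the product counting measure into its diagonal and off-diagonal pieces, use the second-order intensity bound in (C8) to show the off-diagonal piece is negligible, and check that the diagonal piece reproduces exactly the constant $\int K(z)^2dz$ and the weight $\eta(s,s)\sigma^2(s)$ of $\Sigma_{\gamma_0}$; the uniform moment bounds needed for the triangular-array CLT demand the same care. The remaining ingredients --- the two laws of large numbers, the $O(h^2)$ smoothing bias, and the negligibility of the first-step error --- are routine given the stated conditions.
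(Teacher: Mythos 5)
Your proposal is correct, and in substance it performs exactly the same accounting as the paper's proof: the limiting variance comes from the diagonal of the product counting measure (yielding $\int K(z)^2dz$ and the weight $\eta(s,s)\sigma^2(s)$, with the off-diagonal part controlled by the second-order intensity $f$ of (C8) and contributing only $O(h)$ after normalization), the asynchronicity bias is $O(h^2)$ by symmetry of $K$ and killed by $nh^5\to 0$, the first-step error enters as $O_p(\sqrt{h})$ because $\hat\beta-\beta_0=O_p(n^{-1/2})$ beats the nonparametric rate, and a Lyapunov condition closes the CLT. Where you differ is in the organization: because $U^f$ is linear in $\gamma$ (and in $\hat\beta$), you write $\hat\gamma-\gamma_0=\hat A_n^{-1}(\Psi_A+\Psi_B+\Psi_C)$ as an \emph{exact} algebraic identity, which lets you dispense with two pieces of machinery the paper uses: the consistency argument via the convexity lemma, and the P-Donsker / stochastic-equicontinuity step that the paper invokes to replace $\sqrt{nh}(\mathcal P_n-\mathcal P)[\cdots(\hat\beta,\gamma)\cdots]$ by the same quantity at $(\beta_0,\gamma_0)$. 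For this linear estimating equation your route is cleaner and self-contained; the paper's empirical-process formulation is the one that would generalize to nonlinear link functions, where no exact decomposition is available. One small point to keep in mind (present in the paper's proof as well): in the diagonal variance term you write $E\{Z(s)Z(s)^T\sigma^2(t)\}$, which implicitly uses that $\epsilon(t)^2$ is uncorrelated with $Z(s)Z(s)^T$ --- slightly more than the uncorrelatedness in (C3) --- so you are at the same level of rigor as the paper, not below it.
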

The asymptotic distribution of $\hat{\gamma}$ is the same as that in \cite{cao15} with identity link function. As $\hat{\gamma}$ has $\sqrt{nh}$ rate of convergence, slower than the $\sqrt{n}$ rate of convergence of $\hat{\beta},$ plugging in $\hat{\beta}_p$ or $\hat{\beta}_c$ does not affect the limiting distribution of $\hat{\gamma},$ which corroborates the validity of the proposed two-step method. That is, estimation of $\gamma$ is as efficient as if $\beta$ were known {\it a priori}. The variance of $\hat{\gamma}$ can be estimated from the sandwich formula
\begin{align*}
	\widehat{\mbox{Var}}(\hat{\gamma})&=\left\{\sum_{i=1}^n\iint K_h(t-s) Z_i(s) Z_i(s)^{T}d N_i(t,s)\right\}^{-1}\\
	& \sum_{i=1}^n\left [\iint K_h(t-s) Z_i(s)\{ Y_i(t)-Z_i(s)^T \hat{\gamma} -X_i(t)^T\hat\beta \} dN_i(t,s) \right]^{\otimes 2}\\
	&\left\{\sum_{i=1}^n\iint K_h(t-s) Z_i(s) Z_i(s)^{T}d N_i(t,s)\right\}^{-1}.
\end{align*}

\subsection{Simultaneous estimation of synchronous and asynchronous longitudinal covariates}
For the mixed synchronous and asynchronous longitudinal covariates, a natural idea is to use estimating equations to get estimators of $\beta$ and $\gamma$ simultaneously, similar to \cite{cao15}. 
Denote $W(t,s) = \{X(t)^T, Z(s)^T \}^T.$ We use the estimating equation
\begin{align}
	&U_w(\beta, \gamma)\notag \\
	&=n^{-1}\sum_{i=1}^{n}\iint K_h(t-s)W_i(t,s)\{Y_i(t) - Z_i(s)^T\gamma - X_i(t)^T{\beta} \}dN_i(t,s)  \label{ee2-1}
\end{align}
to get $\hat{\beta}_w$ and $\hat{\gamma}_w,$ respectively. 
As shown in Theorem \ref{Ch4: cor cao}, the resulting estimators are both asymptotically unbiased.
However, rate of convergence of the obtained estimator of $\beta$ is slower than the parametric $\sqrt{n}$ rate, as obtained through the partial linear model or centering approach. The rationale is that unnecessary smoothing on $X(t)$ makes the corresponding estimator less efficient. This is further demonstrated in the simulation studies. We need the following assumption. 

(C11) $\E\{W(t,s)W(t,s)^T\} \in \mathbb{R}^{(p+q)\times(p+q)}$ is twice continuously differentiable for $(t,s) \in [0,1]$ with $W(t,s)=\{X(t)^T,Z(s)^T\}^T.$ $\int \E \{W(t,t)W(t,t)^T\} \newline \eta(t,t)\df t$ is positive definite and
$$
\|\int\E\{W(t,t)W(t,t)^T\}\sigma^2(t)\eta(t,t)\df t \|_{\infty}< \infty \quad \forall t ,
$$
where for a square matrix $A,$ $\|A\|_\infty = \max_{1 \le i \le n}\sum_{j=1}^n |a_{ij}|.$
\begin{theorem}\label{Ch4: cor cao}
	Under conditions (C8), (C10) and (C11), let $\hat{\theta}_w = (\hat{\beta}_w^T, \hat{\gamma}_w^T)^T$ and $\theta_0=(\beta_0^T, \gamma_0^T)^T,$ the asymptotic distributions of $\hat\theta_w$ satisfies
	\begin{align*}
		\sqrt{nh}(\hat\theta_w-\theta_0 )&\stackrel{d}{\to}N(0,A_{\textcolor{black}{\theta_0}}^{-1}\Sigma_{\textcolor{black}{\theta_0}} A_{\textcolor{black}{\theta_0}}^{-1}),
	\end{align*}
	where \begin{align*}
		A_{\textcolor{black}{\theta_0}}&=\int\E\{W(t,t)W(t,t)^T\}\eta(t,t)\df t \quad\text{and}\\
		\Sigma_{\textcolor{black}{\theta_0}}&=\int K(z)^2\df z\int\E\{W(t,t)W(t,t)^T \}\sigma^2(t)\eta(t,t)\df t.
	\end{align*}
\end{theorem}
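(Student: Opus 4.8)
The plan is to exploit that $U_w(\beta,\gamma)$ is linear in $\theta=(\beta^T,\gamma^T)^T$, so $\hat\theta_w$ has the closed form $\hat\theta_w=\hat A_n^{-1}\hat B_n$ with $\hat A_n=n^{-1}\sum_{i=1}^n\iint K_h(t-s)W_i(t,s)W_i(t,s)^T dN_i(t,s)$ and $\hat B_n=n^{-1}\sum_{i=1}^n\iint K_h(t-s)W_i(t,s)Y_i(t)dN_i(t,s)$; hence $\hat\theta_w-\theta_0=\hat A_n^{-1}U_w^*$ with $U_w^*:=U_w(\beta_0,\gamma_0)$, and no separate consistency or Taylor step is needed. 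Writing (\ref{true-model}) with the intercept absorbed into $X(t)$ gives $Y_i(t)-X_i(t)^T\beta_0-Z_i(s)^T\gamma_0=\{Z_i(t)-Z_i(s)\}^T\gamma_0+\epsilon_i(t)$, so that $U_w^*=n^{-1}\sum_i\xi_{i,n}$ is an i.i.d. average with $\xi_{i,n}=\iint K_h(t-s)W_i(t,s)[\{Z_i(t)-Z_i(s)\}^T\gamma_0+\epsilon_i(t)]dN_i(t,s)$.

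I would first show $\hat A_n\overset{p}{\rightarrow}A_{\theta_0}$: since $E\{dN_i(t,s)\}=\eta(t,s)dtds$, we get $E\hat A_n=\iint K_h(t-s)E\{W(t,s)W(t,s)^T\}\eta(t,s)dtds$, and the substitution $u=(t-s)/h$ with the twice-differentiability in (C8) and (C11) yields $E\hat A_n\to\int E\{W(t,t)W(t,t)^T\}\eta(t,t)dt=A_{\theta_0}$; a second-moment calculation (using the diagonal/off-diagonal split of $dN_i\,dN_i$ described below) gives $\hat A_n-E\hat A_n=o_p(1)$, so $\hat A_n$ is invertible with probability tending to one by the positive-definiteness in (C11).

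Next I would treat the bias and the variance of $\xi_{i,n}$ separately. For the bias, $E\xi_{i,n}=\iint K_h(t-s)E\{W(t,s)[Z(t)-Z(s)]^T\}\gamma_0\,\eta(t,s)dtds$ (the $\epsilon$ contribution vanishes because the error is mean zero and uncorrelated with the covariates); since $Z(t)-Z(s)=0$ on the diagonal, a Taylor expansion in $t-s$, the symmetry of the kernel, and the smoothness conditions in (C8) and (C11) yield $E\xi_{i,n}=O(h^2)$, whence $\sqrt{nh}\,\|EU_w^*\|=O(\sqrt{nh^5})\to0$ by (C10). For the variance, I would expand $E(\xi_{i,n}\xi_{i,n}^T)$ by decomposing the product measure $dN_i(t,s)\,dN_i(t',s')$ into a diagonal part (same atom, first-moment measure $\eta(t,s)dtds$) and an off-diagonal part (two distinct atoms, controlled by the conditional intensity $f$ in (C8)); on the diagonal $Z(t)-Z(s)=0$ leaves only $\epsilon(t)^2$ with mean $\sigma^2(t)$, and $u=(t-s)/h$ produces the diagonal term $h^{-1}\int K(z)^2dz\int E\{W(t,t)W(t,t)^T\}\sigma^2(t)\eta(t,t)dt+o(h^{-1})=h^{-1}\Sigma_{\theta_0}+o(h^{-1})$, while the off-diagonal part is $O(1)$. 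Thus $h\,\mbox{Var}(\xi_{i,n})\to\Sigma_{\theta_0}$.

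To finish, $\sqrt{nh}(U_w^*-EU_w^*)=\sqrt{h/n}\sum_{i=1}^n(\xi_{i,n}-E\xi_{i,n})$ is a centered i.i.d. triangular-array sum with variance $h\,\mbox{Var}(\xi_{i,n})\to\Sigma_{\theta_0}$, and the Lyapunov condition holds because the kernel is $O(h^{-1})$ on a band of width $O(h)$, giving $E\|\xi_{i,n}\|^{2+\delta}=O(h^{-(1+\delta)})$ and a Lyapunov ratio of order $(nh)^{-\delta/2}\to0$ under (C10); combining the resulting CLT with the negligible bias and Slutsky's theorem applied to $\hat A_n^{-1}\overset{p}{\rightarrow}A_{\theta_0}^{-1}$ gives $\sqrt{nh}(\hat\theta_w-\theta_0)\overset{d}{\rightarrow}N(0,A_{\theta_0}^{-1}\Sigma_{\theta_0}A_{\theta_0}^{-1})$. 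The whole scheme is the multivariate analogue of Theorem \ref{thm3} (and of \cite{cao15}) with $Z(s)$ replaced by $W(t,s)$, so I expect the main obstacle to be purely technical: the careful accounting for the double point-process integral $dN_i(t,s)\,dN_i(t',s')$, i.e. showing the off-diagonal mass is only $O(1)$ and hence asymptotically negligible after multiplication by $h$, which is precisely where condition (C8) enters.
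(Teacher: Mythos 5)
Your proposal is correct and reaches the same conclusion by the same basic mechanism (kernel-smoothed estimating equation, bias of order $h^2$ killed by $nh^5\to 0$, variance of order $h^{-1}$ with leading constant $\Sigma_{\theta_0}$, Lyapunov CLT, Slutsky), but it is organized differently from the paper in two respects, both to your advantage. First, the paper treats $\hat\theta_w$ as a generic Z-estimator: it proves consistency separately via the convexity lemma, and then uses a P-Donsker class equicontinuity argument to replace $\theta$ by $\theta_0$ in the centered empirical-process term of $\sqrt{nh}\,U_w(\theta)$. You instead exploit the exact linearity of $U_w$ in $\theta$ to write $\hat\theta_w-\theta_0=\hat A_n^{-1}U_w(\theta_0)$ in closed form, which makes both the consistency step and the Donsker argument unnecessary; this is a genuinely more elementary route that is available precisely because the link is the identity. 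Second, for the variance you split the product measure $dN_i(t,s)\,dN_i(t',s')$ into same-atom and distinct-atom parts (the latter controlled by $f$ in (C8)), whereas the paper conditions on the covariate and observation processes and decomposes $\mathrm{var}(\psi_i)=J_1+J_2$ with $J_1$ carrying the $\epsilon$ contribution and $J_2$ the $\{W(t,t)-W(t,s)\}^T\theta_0$ contribution; these are equivalent bookkeeping devices and both isolate the same leading term $h^{-1}\Sigma_{\theta_0}$. One small point to tighten: your phrase ``on the diagonal $Z(t)-Z(s)=0$'' is loose, since the same-atom part of the double integral still lives on $t\ne s$; what actually makes that contribution negligible is that $E[\{Z(t)-Z(s)\}^T\gamma_0]^2\to 0$ as $t-s\to 0$ by the smoothness of $E\{Z(t)Z(s)^T\}$ in (C11), so that after multiplying by $h$ it vanishes --- exactly the content of the paper's estimate $J_2=O(h)$. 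With that sentence repaired, your argument is complete at the same level of rigor as the paper's.
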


The asymptotic covariance matrix can be estimated by the sandwich formula
\begin{eqnarray*}
	&&\widehat{\mbox{Var}}(\hat{\theta}_w) \\
	&= & \left \{\sum_{i=1}^n \iint K_h(t-s)W_i(t,s)W_i(t,s) ^T dN_i(t,s) \right \}^{-1}\\
	&&\sum_{i=1}^{n}\left [\iint K_h(t-s)W_i(t,s)\{Y_i(t) - Z_i(s)^T\hat{\gamma} - X_i(t)^T\hat{\beta} \}dN_i(t,s) \right]^{\otimes 2}\\
	&&\left \{\sum_{i=1}^n \iint K_h(t-s)W_i(t,s)W_i(t,s) ^T dN_i(t,s) \right \}^{-1}.
\end{eqnarray*}

It is worth noting that Theorem~\ref{Ch4: cor cao} requires weaker condition on the relationship between $X(t)$ and $Z(t)$ than the two-step approach. For the latter to work, we need the key assumption that $X(t)$ and $Z(t)$ are uncorrelated as specified in (C2). This reflects the trade off between robustness and efficiency.   

\subsection{Bandwidth selection}\label{Ch3: subsec bdwd}
Our approach in the estimation of $\gamma$ depends on the bandwidth selection. In synchronous longitudinal data, cross-validation is usually used to select the optimal bandwidth by minimizing the squared prediction error. However, in asynchronous longitudinal data, since observations are mismatched, prediction errors are not well defined. \cite{cao15} proposes to minimize the mean squared error by calculating bias and variance separately. First, based on the asymptotic result, bias is of the same order as the bandwidth square. One can regress the squared bandwidth with the estimated regression coefficient to obtain the slope estimate. The bias is approximated by the multiplication of the slope estimate and the squared bandwidth. Second, the data are split into two halves and coefficient estimates are obtained for each half. The squared difference of the two coefficient estimates divided by $4$ is used to approximate the variance. The mean squared error is squared bias plus variance and the optimal bandwidth is chosen to be the one that minimizes the mean squared error. This method is somewhat {\it ad hoc} since only two folds of data are used and squared difference is coarse and may be imprecise estimate of the variance. 

We propose a new kernel smoothed cross-validation to select the optimal bandwidth within certain range. First, we split the data into several folds, and estimate regression coefficient without one fold. In computing the prediction error, we use kernel smoothing to deal with the mismatched response and covariate. Specifically, suppose $\hat{\beta}^{(-k)}$ and $\hat{\gamma}^{(-k)}$ are estimates without the $k$th fold. The squared prediction error for the $k$th fold is computed as 
$$\frac{\sum_{i=1}^{n^{(k)}}\iint K_h(t-s)\{Y_i(t)-X_i(t)^T\hat\beta^{(-k)}-Z_i(s)^T\hat\gamma^{(-k)}\}^2d N_i(t,s)}{\sum_{i=1}^{n^{(k)}}\iint K_h(t-s)dN_i(t,s)},$$
where $n^{(k)}$ is the number of subjects in the $k$th fold. We take average of the squared prediction errors over all folds, and select the bandwidth with the smallest average squared prediction error. A typical functional relationship between the average squared prediction error and bandwidth is depicted in Figure \ref{Ch3: fig mse}.
\begin{figure}[!ht]
	\includegraphics[width = 4.5in]{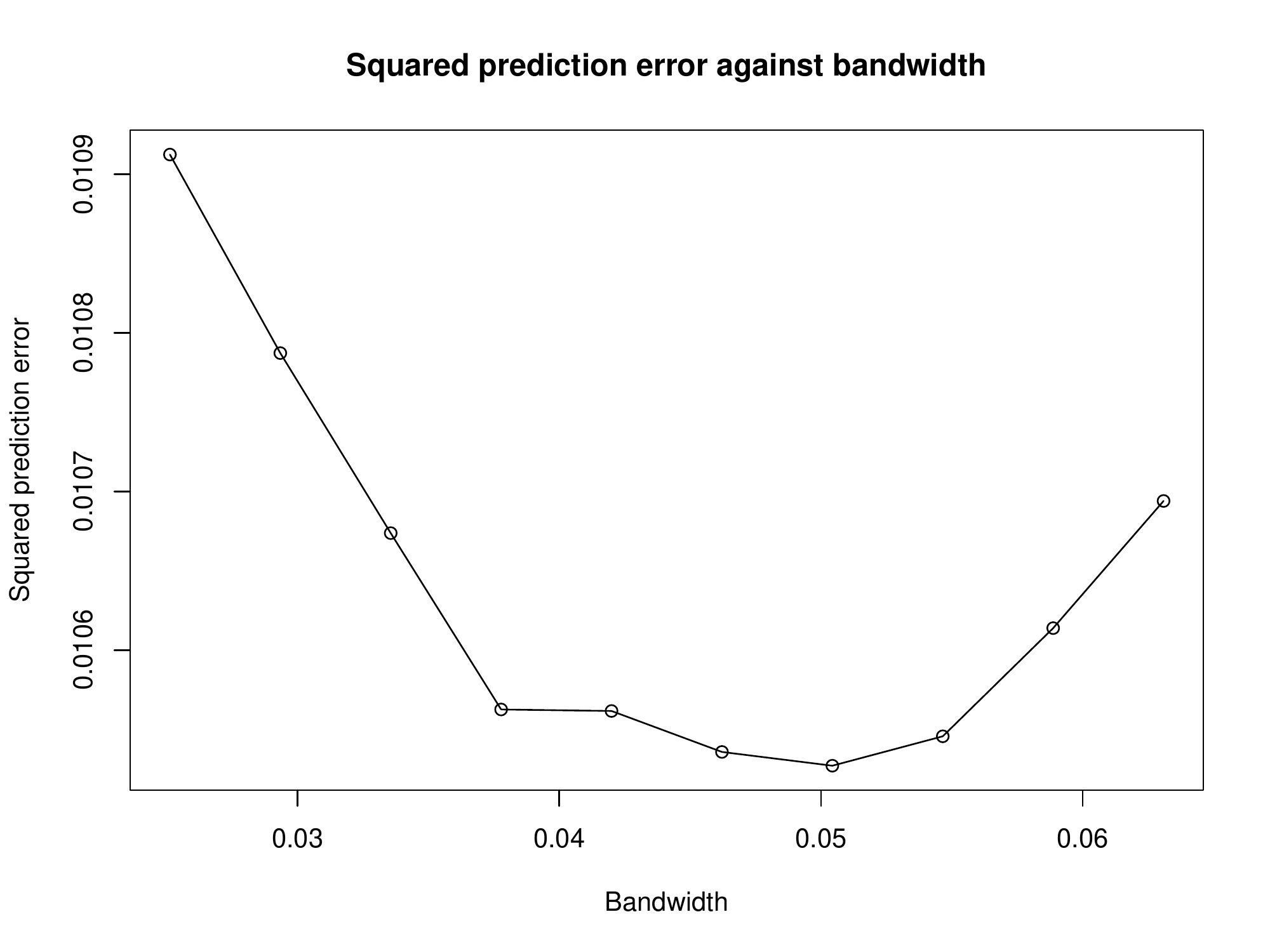}
	\centering \caption{Typical squared prediction error against bandwidth for $n=100, E\{Z(t)\}=2\sin(2\pi t)$, with bandwidth ranging from $n^{-0.8}$ to $n^{-0.6}$. }\label{Ch3: fig mse}
\end{figure}

\section{Numerical studies}
In this section, we investigate the finite sample performance of the proposed estimators through Monte Carlo simulations.

\subsection{Omitted longitudinal covariate}\label{Ch3: subsec numerical omit} 
We first examine the performance of $\hat{\beta}_p$ and $\hat{\beta}_c$ along with the na\"ive estimator $\hat{\beta}_n$ when some important covariates are omitted. The model we use to generate data is
$$
Y(t) = \alpha + X(t)^T\beta + Z(t)^T\gamma + \epsilon(t),
$$
where $\alpha = 1, \beta = 2$ and $\gamma =-1.$ We generate $1,000$ datasets, each consisting of $n=100, 400,$ or $900$ subjects. Bandwidth is fixed at $n^{-0.6};$ other bandwidths yield similar results that are relegated in the Supplementary Material. The number of observations for each subject is $\mbox{Poisson} (5) + 1$ and the observation times are generated from the uniform distribution ${\cal U}(0,1).$ The covariate process $X(t)$ and $Z(t)$ are both Gaussian, with $E\{X(t) \} = \sqrt{t}, \mbox{Cov}\{X(t), X(s) \} = \mbox{Cov}\{Z(t), Z(s) \} = e^{-|t-s|}$ and $E\{Z(t) \} = 0.5 + t, 0.5 + t^2, 2\mbox{sin}(2\pi t)$ or $E\{ Z(t)\} =2.$ The error process $\epsilon(t)$ is Gaussian with $E\{ \epsilon(t)\} =0$ and $\mbox{Cov}\{\epsilon(t), \epsilon(s) \} = 2^{-|t-s|}.$ $X(t), Z(t)$ and $\epsilon(t)$ are independently generated. The results are summarized in Table \ref{Ch3: table beta}. 
\begin{table}[!ht]
	\caption{1000 simulation results for inference of $\beta$ with $h=n^{-0.6}$}\label{Ch3: table beta}
	\begin{center}\small
		\scalebox{0.75}{\begin{tabular}{lrrrrrrrrrrrr}
				\hline
				& \multicolumn{4}{c}{Na\"ive} & \multicolumn{4}{c}{PLM} & \multicolumn{4}{c}{Centering}\\
				&	Bias	&	SD	&	SE	&	CP	&	Bias	&	SD	&	SE	&	CP	&	Bias	&	SD	&	SE	&	CP\\
				\hline
				\multicolumn{3}{l}{ Independent covariates} \\
				\multicolumn{3}{l}{ $E\{Z(t)\}=2$} \\
				$n=100$ & $0.002$ & $0.117$ & $0.113$ & $94$ & $0.003$ & $0.122$ & $0.116$ & $93$ & $0.003$ & $0.123$ & $0.117$ & $93$ \\ 
				$n=400$ & $0.001$ & $0.060$ & $0.058$ & $94$ & $0.002$ & $0.063$ & $0.060$ & $94$ & $0.002$ & $0.063$ & $0.060$ & $94$ \\ 
				$n=900$ & $0.0004$ & $0.039$ & $0.039$ & $94$ & $0.001$ & $0.041$ & $0.040$ & $94$ & $0.001$ & $0.041$ & $0.040$ & $94$ \\ 
				\multicolumn{3}{l}{ $E\{Z(t)\}=0.5+t$} \\
				$n=100$ & $-0.062$ & $0.122$ & $0.115$ & $89$ & $0.001$ & $0.125$ & $0.117$ & $92$ & $0.001$ & $0.125$ & $0.118$ & $92$ \\ 
				$n=400$ & $-0.066$ & $0.057$ & $0.058$ & $79$ & $-0.003$ & $0.059$ & $0.060$ & $95$ & $-0.003$ & $0.059$ & $0.060$ & $95$ \\ 
				$n=900$ & $-0.064$ & $0.038$ & $0.039$ & $62$ & $-0.001$ & $0.040$ & $0.040$ & $95$ & $-0.001$ & $0.040$ & $0.040$ & $95$ \\ 
				\multicolumn{3}{l}{ $E\{Z(t)\}=0.5+t^2$} \\
				$n=100$ & $-0.058$ & $0.116$ & $0.114$ & $92$ & $0.003$ & $0.120$ & $0.116$ & $94$ & $0.003$ & $0.120$ & $0.116$ & $94$ \\ 
				$n=400$ & $-0.062$ & $0.057$ & $0.058$ & $81$ & $-0.002$ & $0.059$ & $0.060$ & $96$ & $-0.002$ & $0.059$ & $0.060$ & $96$ \\ 
				$n=900$ & $-0.058$ & $0.039$ & $0.039$ & $68$ & $0.002$ & $0.040$ & $0.040$ & $95$ & $0.002$ & $0.040$ & $0.040$ & $95$ \\ 
				\multicolumn{3}{l}{ $E\{Z(t)\}=2\mbox{sin}(2\pi t)$} \\
				$n=100$ & $0.228$ & $0.140$ & $0.133$ & $57$ & $-0.005$ & $0.126$ & $0.117$ & $92$ & $-0.006$ & $0.126$ & $0.117$ & $92$ \\ 
				$n=400$ & $0.229$ & $0.067$ & $0.068$ & $7$ & $0.0003$ & $0.061$ & $0.060$ & $95$ & $0.0003$ & $0.061$ & $0.060$ & $95$ \\ 
				$n=900$ & $0.229$ & $0.045$ & $0.045$ & $0$ & $-0.0004$ & $0.040$ & $0.040$ & $96$ & $-0.0004$ & $0.040$ & $0.040$ & $96$ \\ 
				\hline
				\multicolumn{3}{l}{ Uncorrelated covariates} \\
				\multicolumn{3}{l}{ $E\{Z(t)\}=2$} \\
				$n=100$ & $-0.002$ & $0.207$ & $0.189$ & $91$ & $-0.003$ & $0.219$ & $0.196$ & $90$ & $-0.003$ & $0.219$ & $0.196$ & $90$ \\ 
				$n=400$ & $0.003$ & $0.106$ & $0.103$ & $94$ & $0.002$ & $0.112$ & $0.107$ & $93$ & $0.002$ & $0.112$ & $0.107$ & $93$ \\ 
				$n=900$ & $0.002$ & $0.073$ & $0.070$ & $94$ & $0.002$ & $0.076$ & $0.073$ & $93$ & $0.002$ & $0.076$ & $0.073$ & $93$ \\ 
				\multicolumn{3}{l}{ $E\{Z(t)\}=0.5+t$} \\
				$n=100$ & $-0.053$ & $0.207$ & $0.188$ & $91$ & $0.016$ & $0.220$ & $0.193$ & $90$ & $0.016$ & $0.220$ & $0.193$ & $90$ \\ 
				$n=400$ & $-0.067$ & $0.106$ & $0.104$ & $89$ & $-0.002$ & $0.111$ & $0.108$ & $94$ & $-0.002$ & $0.111$ & $0.108$ & $94$ \\ 
				$n=900$ & $-0.059$ & $0.073$ & $0.070$ & $85$ & $0.005$ & $0.076$ & $0.073$ & $94$ & $0.005$ & $0.076$ & $0.073$ & $94$ \\ 
				\multicolumn{3}{l}{ $E\{Z(t)\}=0.5+t^2$} \\
				$n=100$ & $-0.060$ & $0.217$ & $0.190$ & $90$ & $0.003$ & $0.227$ & $0.196$ & $90$ & $0.003$ & $0.227$ & $0.196$ & $90$ \\ 
				$n=400$ & $-0.064$ & $0.107$ & $0.103$ & $89$ & $-0.003$ & $0.113$ & $0.107$ & $93$ & $-0.003$ & $0.113$ & $0.107$ & $94$ \\ 
				$n=900$ & $-0.062$ & $0.071$ & $0.071$ & $86$ & $-0.002$ & $0.074$ & $0.074$ & $94$ & $-0.002$ & $0.074$ & $0.074$ & $94$ \\ 
				\multicolumn{3}{l}{ $E\{Z(t)\}=2\mbox{sin}(2\pi t)$} \\
				$100$ & $0.237$ & $0.226$ & $0.204$ & $74$ & $-0.002$ & $0.225$ & $0.194$ & $90$ & $-0.002$ & $0.225$ & $0.195$ & $90$ \\ 
				$400$ & $0.233$ & $0.118$ & $0.111$ & $46$ & $0.001$ & $0.115$ & $0.108$ & $93$ & $0.001$ & $0.115$ & $0.108$ & $93$ \\ 
				$900$ & $0.231$ & $0.079$ & $0.076$ & $17$ & $0.002$ & $0.076$ & $0.074$ & $94$ & $0.002$ & $0.076$ & $0.074$ & $94$ \\ 
				\hline
		\end{tabular}}
	\end{center}
	\footnotesize{Note: ``Bias" is the empirical bias, ``SD'' is the sample standard deviation, ``SE'' is the average of the standard error estimates, ``CP''$/100$ represents the coverage probability of the $95\%$ confidence interval of $\hat{\beta}$.}
\end{table}

We observe that when $E\{Z(t) \}=2,$ the na\"ive estimator $\hat{\beta}_n$ performs reasonably well. For all other time-varying $E\{Z(t)\},$ there is evidence of substantial bias and poor coverage probabilities for the true $\beta.$ The partial linear model (PLM) based estimator $\hat{\beta}_p$ and centering (Centering) approach based estimator $\hat{\beta}_c$ have almost identical performance with $\sqrt{n}$ rate of convergence. As sample size increases the empirical and model based standard errors tend to agree and the coverage is close to the nominal $95\%$ level. The performance improves with larger sample sizes. 

In order to obtain valid inference based on $\hat{\beta}_p$ and $\hat{\beta}_c,$ $X(t)$ and $Z(t)$ do not have to be independent. Being uncorrelated suffices. The simulation setup is identical to the previous scenario except that \textcolor{black}{the covariate processes }
and the error process $\epsilon(t)$ are generated in the following manner:
\begin{enumerate}
	\item Generate a Gaussian process $\nu(t)$ with $\mbox{Cov}\{\nu(t),\nu(s)\}=e^{-|t-s|}$;
	\item \textcolor{black}{$Z^{\prime}(t)= Z(t) +\nu(t)$} , where $E\{Z(t)\}=0.5+t,0.5+ t^2,0.5+\sqrt{t}$ or $E\{Z(t)\}=2$;
	\item \textcolor{black}{$X^{\prime}(t)= X(t) +\omega\nu(t)$, } 
	where $\omega\sim N(0,1)$;
	\item $\epsilon(t)=\tau\nu(t)$, where $\tau\sim N(0,1)$.
\end{enumerate}
We observe that the na\"ive method $\hat{\beta}_n$ is biased except when $E\{ Z(t)\} = 2.$ The proposed two estimators $\hat{\beta}_p$ and $\hat{\beta}_c$ remain unbiased.

\subsection{Asynchronous longitudinal covariate}
We next study coefficient estimation when $Z(t)$ is mismatched with $X(t)$ and $Y(t).$ The simulation setup is the same as that in Section \ref{Ch3: subsec numerical omit}  with independent $X(t), Z(t)$ and $\epsilon(t).$ As estimators based on partial linear model approach and centering approach have same asymptotic distribution, we illustrate the two-step method with the centering approach, denoted as Centering + KS in Table \ref{Ch3: table all}.
\begin{table}[!ht] 
	\caption{1000 simulation results for $\beta,\gamma$ and $\alpha$}\label{Ch3: table all} \footnotesize
	\setlength{\tabcolsep}{3pt}
	\begin{center}
		\scalebox{0.75}{\begin{tabular}{llrrrrrrrrrrrrrrrr}
				\hline
				& & \multicolumn{4}{c}{LVCF} & \multicolumn{4}{c}{Centering+LVCF} &
				\multicolumn{4}{c}{Centering+KS} & \multicolumn{4}{c}{KS} \\
				&$n$& Bias & SD & SE & CP & Bias & SD & SE & CP & Bias & SD & SE & CP & Bias & SD & SE & CP \\
				\hline
				\multicolumn{4}{l}{$E\{Z(t)\}=2$}\\
				$\beta$ & $100$ & $0.002$ & $0.099$ & $0.091$ & $92$ & $-0.005$ & $0.130$ & $0.116$ & $91$ & $-0.005$ & $0.130$ & $0.116$ & $91$ & $0.001$ & $0.124$ & $0.113$ & $93$ \\ 
				& $400$ & $0.0004$ & $0.049$ & $0.047$ & $94$ & $-0.008$ & $0.064$ & $0.059$ & $93$ & $-0.008$ & $0.064$ & $0.059$ & $93$  & $0.001$ & $0.082$ & $0.081$ & $94$ \\ 
				& $900$ & $-0.002$ & $0.032$ & $0.031$ & $94$ & $-0.008$ & $0.040$ & $0.040$ & $94$ & $-0.008$ & $0.040$ & $0.040$ & $94$  & $-0.001$ & $0.072$ & $0.069$ & $93$ \\ 
				\hline
				$\gamma$ & $100$ & $0.121$ & $0.102$ & $0.095$ & $74$ & $0.128$ & $0.102$ & $0.095$ & $72$ & $0.023$ & $0.129$ & $0.122$ & $91$ & $0.016$ & $0.129$ & $0.116$ & $91$ \\ 
				& $400$ & $0.118$ & $0.051$ & $0.049$ & $34$ & $0.120$ & $0.051$ & $0.049$ & $33$ & $0.006$ & $0.084$ & $0.085$ & $94$ & $0.004$ & $0.084$ & $0.083$ & $95$ \\ 
				& $900$ & $0.118$ & $0.032$ & $0.033$ & $5$ & $0.119$ & $0.032$ & $0.033$ & $5$ & $0.0002$ & $0.071$ & $0.072$ & $95$ & $-0.0005$ & $0.071$ & $0.072$ & $95$ \\ 
				\hline
				$\alpha$ & $100$ & $-0.245$ & $0.242$ & $0.226$ & $80$ & $-0.254$ & $0.249$ & $0.217$ & $76$ & $-0.043$ & $0.308$ & $0.286$ & $91$ & $-0.032$ & $0.305$ & $0.273$ & $91$ \\ 
				& $400$ & $-0.240$ & $0.121$ & $0.116$ & $47$ & $-0.237$ & $0.125$ & $0.112$ & $46$ & $-0.006$ & $0.193$ & $0.194$ & $94$ & $-0.008$ & $0.197$ & $0.194$ & $96$ \\ 
				& $900$ & $-0.236$ & $0.079$ & $0.078$ & $14$ & $-0.234$ & $0.081$ & $0.075$ & $13$ & $0.002$ & $0.161$ & $0.163$ & $94$ & $-0.001$ & $0.166$ & $0.167$ & $95$ \\ 
				\hline
				\multicolumn{4}{l}{$E\{Z(t)\}=0.5+t$}\\
				$\beta$ & $100$ & $-0.011$ & $0.093$ & $0.091$ & $94$ & $-0.003$ & $0.120$ & $0.114$ & $92$ & $-0.003$ & $0.120$ & $0.114$ & $92$ & $-0.002$ & $0.122$ & $0.113$ & $92$ \\ 
				& $400$ & $-0.011$ & $0.046$ & $0.047$ & $95$ & $-0.001$ & $0.060$ & $0.059$ & $94$ & $-0.001$ & $0.060$ & $0.059$ & $94$ & $-0.001$ & $0.085$ & $0.082$ & $93$ \\ 
				& $900$ & $-0.010$ & $0.031$ & $0.032$ & $94$ & $-0.001$ & $0.040$ & $0.039$ & $94$ & $-0.001$ & $0.040$ & $0.039$ & $94$ & $-0.003$ & $0.069$ & $0.070$ & $95$ \\ 
				\hline
				$\gamma$ & $100$ & $0.114$ & $0.095$ & $0.092$ & $74$ & $0.120$ & $0.095$ & $0.092$ & $72$ & $0.008$ & $0.120$ & $0.117$ & $91$ & $0.002$ & $0.119$ & $0.112$ & $91$ \\ 
				& $400$ & $0.119$ & $0.047$ & $0.047$ & $30$ & $0.120$ & $0.047$ & $0.047$ & $28$ & $0.004$ & $0.082$ & $0.081$ & $94$ & $0.002$ & $0.082$ & $0.080$ & $95$ \\ 
				& $900$ & $0.119$ & $0.031$ & $0.032$ & $2$ & $0.119$ & $0.031$ & $0.031$ & $3$ & $0.003$ & $0.068$ & $0.069$ & $94$ & $0.003$ & $0.069$ & $0.069$ & $95$ \\ 
				\hline
				$\alpha$ & $100$ & $-0.230$ & $0.150$ & $0.148$ & $64$ & $-0.243$ & $0.156$ & $0.136$ & $56$ & $-0.010$ & $0.186$ & $0.187$ & $94$ & $-0.004$ & $0.189$ & $0.180$ & $93$ \\
				& $400$ & $-0.236$ & $0.078$ & $0.076$ & $13$ & $-0.244$ & $0.083$ & $0.069$ & $9$ & $-0.001$ & $0.126$ & $0.124$ & $94$ & $0.000$ & $0.131$ & $0.128$ & $95$ \\ 
				& $900$ & $-0.234$ & $0.051$ & $0.051$ & $0$ & $-0.241$ & $0.054$ & $0.047$ & $0$ & $0.000$ & $0.102$ & $0.104$ & $94$ & $0.002$ & $0.107$ & $0.109$ & $96$ \\ 
				\hline
				\multicolumn{4}{l}{$E\{Z(t)\}=0.5+t^2$}\\
				$\beta$ & $100$ & $-0.021$ & $0.094$ & $0.091$ & $93$ & $-0.003$ & $0.125$ & $0.115$ & $93$ & $-0.003$ & $0.125$ & $0.115$ & $93$ & $-0.005$ & $0.125$ & $0.112$ & $92$ \\ 
				& $400$ & $-0.021$ & $0.047$ & $0.047$ & $93$ & $-0.001$ & $0.059$ & $0.059$ & $95$ & $-0.001$ & $0.059$ & $0.059$ & $95$ & $-0.001$ & $0.083$ & $0.081$ & $94$ \\ 
				& $900$ & $-0.018$ & $0.032$ & $0.032$ & $91$ & $0.003$ & $0.040$ & $0.040$ & $95$ & $0.003$ & $0.040$ & $0.040$ & $95$ & $0.003$ & $0.071$ & $0.069$ & $95$ \\ 
				\hline
				$\gamma$ & $100$ & $0.109$ & $0.091$ & $0.091$ & $77$ & $0.115$ & $0.090$ & $0.091$ & $75$ & $0.018$ & $0.119$ & $0.114$ & $92$ & $0.011$ & $0.120$ & $0.110$ & $93$ \\ 
				& $400$ & $0.112$ & $0.048$ & $0.047$ & $33$ & $0.113$ & $0.048$ & $0.047$ & $33$ & $0.009$ & $0.085$ & $0.080$ & $92$ & $0.008$ & $0.085$ & $0.079$ & $92$ \\ 
				& $900$ & $0.109$ & $0.033$ & $0.032$ & $7$ & $0.109$ & $0.033$ & $0.032$ & $7$ & $0.000$ & $0.070$ & $0.068$ & $94$ & $-0.001$ & $0.070$ & $0.068$ & $94$ \\ 
				\hline
				$\alpha$ & $100$ & $-0.202$ & $0.143$ & $0.140$ & $68$ & $-0.219$ & $0.153$ & $0.126$ & $56$ & $-0.013$ & $0.185$ & $0.175$ & $91$ & $-0.006$ & $0.189$ & $0.169$ & $92$ \\  
				& $400$ & $-0.203$ & $0.072$ & $0.072$ & $20$ & $-0.218$ & $0.075$ & $0.064$ & $12$ & $-0.008$ & $0.119$ & $0.114$ & $93$ & $-0.007$ & $0.123$ & $0.119$ & $93$ \\ 
				& $900$ & $-0.201$ & $0.049$ & $0.048$ & $1$ & $-0.216$ & $0.051$ & $0.043$ & $0$ & $-0.001$ & $0.098$ & $0.095$ & $93$ & $-0.000$ & $0.104$ & $0.102$ & $94$ \\ 
				\hline
				\multicolumn{4}{l}{$E\{Z(t)\}=2\sin(2\pi t)$}\\
				$\beta$ & $100$ & $0.087$ & $0.111$ & $0.111$ & $86$ & $0.010$ & $0.121$ & $0.116$ & $94$ & $0.010$ & $0.121$ & $0.116$ & $94$ & $0.004$ & $0.123$ & $0.114$ & $93$ \\ 
				& $400$ & $0.082$ & $0.059$ & $0.056$ & $69$ & $0.008$ & $0.061$ & $0.059$ & $94$ & $0.008$ & $0.061$ & $0.059$ & $94$ & $-0.001$ & $0.086$ & $0.082$ & $93$ \\ 
				& $900$ & $0.084$ & $0.038$ & $0.038$ & $40$ & $0.009$ & $0.043$ & $0.040$ & $92$ & $0.009$ & $0.043$ & $0.040$ & $93$ & $0.001$ & $0.073$ & $0.070$ & $94$ \\ 
				\hline
				$\gamma$ & $100$ & $0.257$ & $0.059$ & $0.056$ & $1$ & $0.255$ & $0.059$ & $0.055$ & $1$ & $0.009$ & $0.064$ & $0.061$ & $91$ & $0.006$ & $0.065$ & $0.061$ & $93$ \\ 
				& $400$ & $0.257$ & $0.028$ & $0.029$ & $0$ & $0.253$ & $0.028$ & $0.028$ & $0$ & $0.003$ & $0.046$ & $0.046$ & $93$ & $0.002$ & $0.047$ & $0.046$ & $95$ \\ 
				& $900$ & $0.257$ & $0.019$ & $0.019$ & $0$ & $0.253$ & $0.019$ & $0.019$ & $0$ & $0.001$ & $0.041$ & $0.040$ & $93$ & $-0.0003$ & $0.041$ & $0.040$ & $94$ \\ 
				\hline
				$\alpha$ & $100$ & $0.279$ & $0.154$ & $0.143$ & $50$ & $0.336$ & $0.157$ & $0.119$ & $25$ & $-0.011$ & $0.157$ & $0.150$ & $92$ & $-0.006$ & $0.158$ & $0.147$ & $93$ \\ 
				& $400$ & $0.282$ & $0.077$ & $0.073$ & $3$ & $0.336$ & $0.078$ & $0.060$ & $0$ & $-0.008$ & $0.097$ & $0.095$ & $93$ & $-0.001$ & $0.104$ & $0.102$ & $94$ \\ 
				& $900$ & $0.282$ & $0.049$ & $0.049$ & $0$ & $0.338$ & $0.050$ & $0.040$ & $0$ & $-0.005$ & $0.080$ & $0.078$ & $93$ & $0.0003$ & $0.088$ & $0.087$ & $94$ \\ 
				\hline
		\end{tabular} }
	\end{center}
	\footnotesize{Note: ``Bias" is the empirical bias, ``SD'' is the sample standard deviation, ``SE'' is the average of the standard error estimates, ``CP''$/100$ represents the coverage probability of the $95\%$ confidence interval for $\hat{\beta},\hat\gamma$, and $\hat\alpha$, respectively.}
\end{table}

For comparison, we implement three alternative estimation procedures. 
\begin{enumerate}
	\item Apply last value carried forward (LVCF) to estimate $\beta$ and $\gamma$ simultaneously. In longitudinal studies, a na\"ive approach to analyzing asynchronous longitudinal data is the last value carried forward method. If data at a certain time point are missing, the observation at the most recent time point in the past is used in analysis for synchronous data. This method is referred to as LVCF in Table \ref{Ch3: table all}. Specifically, for $i$th subject, at time $t_{ij},$ if the covariate process $Z_i(\cdot)$ does not have any observation, then the most recently observed $Z_{i}(s)$ is used, where $s = \mbox{max}\{x \le t_{ij}, x\in \{s_{i1}, \ldots, s_{iM_i} \} \}.$ After this imputation, we proceed with the usual least square estimation procedure.
	\item Apply the two-step approach, but use LVCF to estimate $\gamma$ in the second step. Specifically, for $i$th subject, in the first stage, obtain the longitudinal residual, $\hat{\omega}_i(t_{ij}) = Y_i(t_{ij}) -X_i(t_{ij})^T\hat{\beta}.$ In the second stage, regress $\hat{\omega}_{i}(t_{ij})$ with $Z_i(s),$ where $s= \mbox{max}\{x \le t_{ij}, x\in \{s_{i1}, \ldots, s_{iM_i} \} \}.$ This method is referred to as Centering + LVCF in Table \ref{Ch3: table all}.
	\item 
	Solve the estimating equation (\ref{ee2-1}) to obtain estimators of $\beta$ and $\gamma$ simultaneously. The estimation of $\gamma$ is similar to the proposed two-step approach, yet the estimation of $\beta$ is less efficient than the proposed two-step approach. This method is referred as KS in Table \ref{Ch3: table all}.
\end{enumerate}

Automatic bandwidth selection procedure proposed in Section \ref{Ch3: subsec bdwd} is used where the bandwidths are selected in the range $(n^{-0.8}, n^{-0.6}).$ 

We summarize simulation results in Table \ref{Ch3: table all}. For estimation of $\beta,$ when $E\{Z(t) \}=2,$ all methods perform satisfactorily as assumptions for LVCF are satisfied in this case. When $E\{Z(t)\}$ is non-constant, the performance of LVCF and Centering + LVCF deteriorates and Centering + KS and KS both produce valid results. As our theory predicts, Centering + KS is more efficient than KS for estimation of $\beta$, which is reflected on the smaller variance. For estimation of $\gamma$ and $\alpha,$ both LVCF and Centering + LVCF are biased while Centering + KS and KS produce similar results. 

\section{Application to the ADNI data}
In this section, we illustrate the proposed method of analyzing mixed synchronous and asynchronous longitudinally observed functional data on a study of the Alzheimer's disease. In the dataset, $256$ subjects were followed for $5$ years. The dataset is collected from ADNI GO and ADNI2 in the ADNI study. Among many goals of the ADNI study, we are  interested in clinical, functional neuroimaging and structural variables that affect the progression of mild cognitive impairment and early Alzheimer's disease. The response variable MMSE ranges from $0$ to $30$ measuring global cognitive performance, where larger values means better cognitive state. It is examined from $1$ to $7$ time points. Baseline covariates include age, years of education, whether the person has mild cognitive impairment (MCI), whether the person has early Alzheimer's disease (AD), the number of APOE4 gene copies and the log hazard function of fractional anisotropy (FA) at grid point $0.65$. The FA is one of the most popular diffusion-weighted imaging measures that reflects fiber density and myelination in white matter, which is observed at $1$ to $8$ time points. Details of the data processing are given in \cite{lizhu2022}. The measurement time points of the log hazard functions of FA and the MMSE scores are different between and within subjects while baseline measurements are in alignment with the MMSE score, giving rise to mixed synchronous and asynchronous longitudinal covariates. 

We use model (\ref{true-model}) to fit the data. Our modeling assumes that the asynchronous longitudinal covariate FA is not correlated with baseline covariates. To better understand this, we regress FA against age, years of education, MCI, AD and APOE4(1) and APOE(2), first in six univariate regression models and then in one multiple regression model. The results are summarized in Table \ref{Ch3: table cd4}. The $p$-values are computed based on two-sided test. We observe that none of the six baseline covariates are statistical significantly associated with FA. Consequently, we include them all in our model. 
\begin{table}[!ht]
	\caption{Regression of FA on six covariates}\label{Ch3: table cd4}
	\begin{center}
		\begin{tabular}{lrrrrrr}
			\hline
			& Age & Education & MCI & AD&APOE4(1)& APOE4(2)\\
			& &&\multicolumn{2}{c}{Fit separately} &&\\ Estimate & 0.006& 0.003 & 0.057& $-$0.020 & $-$0.006 & $-$0.070\\
			$p$-value & 0.067&	0.634&	0.120& 0.601& 0.875&0.286\\
			\hline
			&&&\multicolumn{2}{c}{All in one model} &&\\
			Estimate & 0.005 & 0.004 & 0.075 & 0.026 & $-$0.013 & $-$0.067\\
			$p$-value & 0.098 & 0.520 & 0.101 & 0.623 & 0.766 & 0.326\\
			\hline
		\end{tabular}
	\end{center}
\end{table}

We fit model (\ref{true-model}) with bandwidth chosen in the range $(2(Q_3-Q_1)n^{-0.7}, \newline 2(Q_3-Q_1)n^{-0.6}),$ where $Q_3$ and $Q_1$ are third and first quantiles of the combined observation times of MMSE and FA, $n=256$ is number of patients after eliminating missing data \citep{little2014}. We implemented five methods: Na\"ive, LVCF, Centering + LVCF, Centering + KS and KS. The na\"ive method ignores FA in fitting the linear regression model. In the two-step approach, regression coefficients of age, education, MCI, AD, APOE4(1) and APOE4(2) are estimated in the first step using the centering approach, and the regression coefficient of FA is estimated in the second step using either LVCF (Centering + LVCF) or kernel smoothing (Centering + KS). LVCF and KS estimate all regression coefficients simultaneously. We normalize continuous variables including MMSE, age, education and HA. Analysis results are summarized in Table \ref{Ch3: table hiv}.
\begin{table}[!ht]
	\caption{Analysis of dataset from ADNI} 
	\label{Ch3: table hiv}\small   \setlength{\tabcolsep}{3pt}
	\begin{center}\small
		\scalebox{0.8}{\begin{tabular}{lrrrrrrrrrr} 
				\hline
				& \multicolumn{2}{c}{Na\"ive} & \multicolumn{2}{c}{LVCF} & \multicolumn{2}{c}{Centering+LVCF} &  \multicolumn{2}{c}{Centering+KS} & \multicolumn{2}{c}{\cite{cao15}}\\
				& Estimate & $p$-value & Estimate & $p$-value & Estimate  & $p$-value & Estimate  & $p$-value & Estimate  & $p$-value \\
				\hline
				Age& $-$0.199 & 0.258 & $-$0.241 & 0.199 & $-$0.280 & 0.728 & $-$0.280 & 0.728 &$-$0.176 & 0.320\\
				Edu & 0.117 & 0.001 & 0.111 & 0.011 &0.116 & 0.000&0.116 & 0.000&0.106 & 0.001\\
				MCI & $-$0.393 & 0.000 & $-$0.427 & 0.000 & $-$0.422 & 0.000 & $-$0.422 & 0.000 &$-$0.352 & 0.000\\
				AD& $-$1.779 & 0.000 & $-$1.764 & 0.000 & $-$1.858 & 0.000 & $-$1.858 & 0.000& $-$1.742 & 0.000\\
				AP4(1)&$-$0.329 & 0.000 & $-$0.328 & 0.000 & $-$0.305 & 0.000&$-$0.305 & 0.000&$-$0.273 & 0.000\\
				AP4(2) & $-$0.391 & 0.022 & $-$0.366 & 0.036&$-$0.415 & 0.017 & $-$0.415 & 0.017 & $-$0.296 & 0.039\\
				FA &\multicolumn{2}{c}{(omitted)}& 0.047 & 0.225 & 0.044 & 0.248 & 0.061& 0.047& 0.058 & 0.062\\
				\hline
		\end{tabular}}
	\end{center}
	\footnotesize{Note: ``AP4(1)'' is APOE4(1), and ``AP4(2)''  is APOE4(2).}
\end{table} 

There is no statistically significant association between age and MMSE across all methods. As education, MCI, AD, APOE(1) and APOE(2) are baseline covariates, their parameter estimates are quite similar across different methods and all show statistical significance. The estimation results show that MCI at baseline, AD at baseline, APOE4(1) and APOE4(2) have significant negative effect on MMSE, whereas education plays a positive role, which has been verified in the literature \citep{bekris2010}. For misaligned FA, LVCF and Centering + LVCF do not show statistical significance. However, the newly proposed Centering + KS shows that it has a statistically significant positive effect. This finding is consistent with the existing literature that lower FA values, which means less white matter, are associated with lower MMSE scores \citep{kristensen2019}.

\section{Concluding remarks}

In this paper, we propose valid statistical approaches for analysis of longitudinal data with omitted longitudinal covariates. Furthermore, to deal with mixed synchronous and asynchronous longitudinal covariates, we propose a two-step approach for analysis. In the first step, a partial linear model approach or a centering approach is used to estimate regression coefficient of the synchronous longitudinal covariate. In the second step, we regress asynchronous longitudinal covariate with longitudinal residual from the first step through kernel weighting to obtain regression coefficient estimation of the asynchronous longitudinal covariate. We obtain parametric root $n$ rate of convergence for regression coefficient estimation of the synchronous longitudinal covariate and $n^{2/5}$ rate of convergence for regression coefficient estimation of the asynchronous longitudinal covariate. In terms of implementation, the centering approach is computationally faster while the partial linear model approach can suggest possible forms of the omitted longitudinal covariate. We require that asynchronous longitudinal covariate is uncorrelated with synchronous longitudinal covariate. If this assumption is violated, we suggest to model the synchronous and asynchronous longitudinal covariates jointly to get unbiased regression coefficient estimation. 

We use working independence covariance matrix for analysis. \cite{pepe1994} pointed out that working independence covariance matrix is a safe choice to get unbiased estimation of the regression coefficient in longitudinal data analysis with time-varying covariates. A carefully chosen working covariance matrix can lead to efficiency gain compared to the simple independent covariance matrix. A fully efficient estimator requires a correctly specified working covariance matrix, which is difficult in practice. This is similar in spirit to the use of ordinary least squares in the presence of heteroscedasticity in linear models.

\section*{Acknowledgements}
We thank Ting Li for help with data acquisition, Peng Ding for helpful discussions and Congmin Liu for help in the data analysis. Data used in preparation of this article were obtained from the Alzheimers Disease Neuroimaging Initiative (ADNI) database (adni.loni.usc.edu). As such, the investigators within the ADNI contributed to the design and implementation of ADNI and/or provided data but did not participate in analysis or writing of this report. A complete listing of ADNI investigators can be found at: \url{
	http://adni.loni.usc.edu/wp-content/uploads/how_to_apply/ADNI_Acknowledgement_List.pdf.}

\newpage
\appendix
In Section A, we provide detailed proofs of Theorems $1$ to $5$. Our main tools are empirical processes \citep{vw95}. In Section B, we list additional simulations. 

\appendix
\section{Proofs of main results}
\subsection{Proof of Theorem 1}

\begin{theorem}\label{consis}
	Under (C1) and (C2), estimation of $\beta$ in (2.1) under the mis-specified model (2.2) is unbiased.
\end{theorem}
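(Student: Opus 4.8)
The plan is to start from the bias expansion $E(\hat\beta_n)=\beta+I+II$ established earlier and show that, under (C1) and (C2), the $\beta$-block of $I+II$ is zero; it is cleanest to carry the intercept inside the design. Write $\tilde X_{ij}=(1,X_{ij}^T)^T$ and $B_n=\sum_{i,j}\tilde X_{ij}\tilde X_{ij}^T$, so that jointly minimizing the least-squares criterion for (\ref{mis-model}) gives $(\hat\alpha^\diamond,\hat\beta_n^T)^T=B_n^{-1}\sum_{i,j}\tilde X_{ij}Y_{ij}$. Substituting the true model (\ref{true-model}), $Y_{ij}=\alpha+X_{ij}^T\beta+Z_{ij}^T\gamma+\epsilon_{ij}$, yields
\begin{equation*}
\begin{pmatrix}\hat\alpha^\diamond\\ \hat\beta_n\end{pmatrix}
=\begin{pmatrix}\alpha\\ \beta\end{pmatrix}
+B_n^{-1}\sum_{i,j}\tilde X_{ij}\bigl(Z_{ij}^T\gamma+\epsilon_{ij}\bigr),
\end{equation*}
so the whole argument reduces to controlling the $\beta$-block of $B_n^{-1}\sum_{i,j}\tilde X_{ij}(Z_{ij}^T\gamma+\epsilon_{ij})$.

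First I would dispose of the error term. Conditioning on the $\sigma$-field $\mathcal G$ generated by all covariate values $\{X_{ij},Z_{ij}\}$, all observation times, and the counting processes, the modelling assumption that $\epsilon(t)$ is mean zero and uncorrelated with $X(t)$ and $Z(t)$ gives $E(\epsilon_{ij}\mid\mathcal G)=0$, so $E\{B_n^{-1}\sum_{i,j}\tilde X_{ij}\epsilon_{ij}\}=E\{B_n^{-1}\sum_{i,j}\tilde X_{ij}\,E(\epsilon_{ij}\mid\mathcal G)\}=0$; this is the step already used implicitly when the bias display dropped $\epsilon_{ij}$.

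The substantive part is the $Z$-term, which accounts for $I+II$. By (C1) write $E(Z_{ij})=\mu$ for a fixed vector $\mu$ and split $Z_{ij}=\mu+(Z_{ij}-\mu)$. The constant part contributes $(\mu^T\gamma)\,B_n^{-1}\sum_{i,j}\tilde X_{ij}$, and here I would invoke the elementary identity $B_n^{-1}\sum_{i,j}\tilde X_{ij}=B_n^{-1}B_n e_1=e_1$, valid because the first coordinate of $\tilde X_{ij}$ equals $1$ so that $\sum_{i,j}\tilde X_{ij}=\sum_{i,j}\tilde X_{ij}\tilde X_{ij}^Te_1$; thus this part is $(\mu^T\gamma)e_1$, which shifts only the intercept estimate and leaves the $\beta$-block untouched. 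This is term $I$, and it simultaneously explains the remark that $\alpha$ is not consistently estimable under the mis-specified model, the na\"ive procedure targeting $\alpha+\mu^T\gamma$. For the centered part, term $II$, conditioning once more on the $\sigma$-field $\mathcal G_X$ generated by the covariates $X$ and the observation times reduces it to $E\bigl\{B_n^{-1}\sum_{i,j}\tilde X_{ij}\,E(Z_{ij}-\mu\mid\mathcal G_X)^T\bigr\}\gamma$, and (C2) makes the within-time second moment factorize, $E\{X(t)Z(t)^T\}=E\{X(t)\}E\{Z(t)\}^T$, which cancels the contribution to the $\beta$-block; equivalently, in the consistency reading, $m^{-1}\sum_{i,j}\tilde X_{ij}(Z_{ij}-\mu)^T\gamma$ converges by a law of large numbers for counting-process averages to a multiple of $e_1$, again leaving $\hat\beta_n$ unaffected.

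The main obstacle I anticipate is the interplay between the random matrix $B_n^{-1}$ and the cross term in $II$: uncorrelatedness of $X(t)$ and $Z(t)$ pins down the limiting cross moment but not the full joint law, so the clean conclusion is obtained either by conditioning on the $\sigma$-field of the covariates and observation times --- under which, together with a mild exogeneity of the observation times, both the $\epsilon$- and $Z$-contributions to the $\beta$-block vanish exactly --- or by letting $n\to\infty$, where only $E\{X(t)Z(t)^T\}-E\{X(t)\}E\{Z(t)\}^T=0$ enters. Either way, $I=0$ and $II=0$, so $E(\hat\beta_n)=\beta$, and as a by-product $\hat\alpha^\diamond$ converges to $\alpha+E\{Z(t)\}^T\gamma$, which is precisely the content of the remark following the theorem.
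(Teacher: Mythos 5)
Your proposal is correct and follows essentially the same route as the paper's proof: both absorb the constant $E\{Z(t)\}^T\gamma$ into the intercept (your identity $B_n^{-1}\sum_{i,j}\tilde X_{ij}=e_1$ just makes that step exact) and eliminate the centered $Z$-term via (C2), with the $\epsilon$-term handled by exogeneity. Your explicit caveat that uncorrelatedness at each $t$ does not by itself license factoring the expectation against the random matrix $B_n^{-1}$ is a fair observation --- the paper's own proof performs that factorization $E[AB]=E[A]E[B]$ without further comment --- but this does not change the fact that the two arguments are the same in substance.
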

\begin{proof}
	We re-parameterize $(2.1)$ as follows. 
	\begin{eqnarray*}
		Y(t) &=& \alpha + E\{Z(t)\}^T\gamma + X(t)^T\beta + [Z(t) - E\{Z(t) \}]^T\gamma + \epsilon(t)\\
		&=& \alpha^\diamond + X(t)^T\beta^\diamond + \epsilon^\diamond (t),
	\end{eqnarray*}
	where $\alpha^\diamond = \alpha+ E\{ Z(t)\}^T\gamma,$ $\beta^\diamond = \beta$ and $\epsilon^\diamond (t) = [Z(t) - E\{Z(t) \}]^T\gamma + \epsilon(t).$ Under (C1), $\alpha^{\diamond}$ is constant\textcolor{black}{, and $E[\alpha+E\{Z_{ij}\}-\alpha_0]=E\left\{\left[Z_{ij}-E\{Z_{ij}\}\right]^T\gamma\right\}=0.$} Under (C2), the new re-parametrized error term $\epsilon^\diamond(t)$ is a mean $0$ process and uncorrelated with $X(t).$ \textcolor{black}{ Thus, we have
		\begin{align*}
			&I=E\left[\Big(\sum_{i=1}^{n}\sum_{j=1}^{M_i}X_{ij}X_{ij}^T \Big)^{-1} X_{ij}\left\{\alpha+E(Z_{ij})^T \gamma-\alpha^\diamond\right\}\right]&\\
			&=E\left[\Big(\sum_{i=1}^{n}\sum_{j=1}^{M_i}X_{ij}X_{ij}^T \Big)^{-1} X_{ij}\right]E\left[\left\{\alpha+E(Z_{ij})^T \gamma-\alpha^\diamond\right\}\right]=0,
		\end{align*}
		\begin{align*}
			&II=E\left[\Big(\sum_{i=1}^{n}\sum_{j=1}^{M_i}X_{ij}X_{ij}^T \Big)^{-1} X_{ij} \left\{Z_{ij}-E(Z_{ij})\right\}^T\gamma\right]&\\
			&=E\left[\Big(\sum_{i=1}^{n}\sum_{j=1}^{M_i}X_{ij}X_{ij}^T \Big)^{-1} X_{ij}\right]E\left[\left\{Z_{ij}-E(Z_{ij})\right\}^T\gamma\right]=0. \quad i=1,\ldots,n;j=1,\ldots,M_i.
		\end{align*}
	}
	Consequently, the mis-specified model (2.2) can be used to correctly estimate the regression coefficient $\beta$ in (2.1).
\end{proof}
\subsection{Proof of Theorem 2}
\begin{theorem}\label{thm1}
	Under (C2)-(C6), the asymptotic distribution of $\hat{\beta}_p$ satisifies
	\begin{equation}\label{asym-plm}
		\sqrt{n}(\hat\beta_p-\beta_0)\overset{d}{\rightarrow} N(0,A_{\textcolor{black}{\beta_0}}^{-1}\Sigma_{\textcolor{black}{\beta_0}} A_{\textcolor{black}{\beta_0}}^{-1}),
	\end{equation} 
	where $\beta_0$ is the true value of $\beta$ in (2.1),
	\begin{eqnarray*}
		A_{\textcolor{black}{\beta_0}}&=&E\int \{\tilde X(t)\tilde X(t)^T\}\lambda(t)dt,\quad \mbox{and}\\
		\Sigma_{\textcolor{black}{\beta_0}}&=&E \Big\{\int \tilde{X}(t) \{\tilde{Z}(t)^T\gamma_0 + \epsilon(t) \}dN(t)\Big\}^{\otimes2}.
	\end{eqnarray*}
\end{theorem}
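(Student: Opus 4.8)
Following the reparametrization used in the proof of Theorem~\ref{consis}, write (2.1) as $Y(t)=\alpha(t)+X(t)^T\beta_0+\epsilon_p(t)$ with $\alpha(t):=\alpha+E\{Z(t)\}^T\gamma_0$ and $\epsilon_p(t):=\tilde Z(t)^T\gamma_0+\epsilon(t)$. By (C2), $E\{\tilde X(t)\tilde Z(t)^T\}=\mathrm{Cov}\{X(t),Z(t)\}=0$, so $\epsilon_p$ has mean $0$ and is uncorrelated with $X(t)$, and by (C3) it is uncorrelated with the observation process; hence the working model (2.4) is a correctly specified partial linear model with slope $\beta_p=\beta_0$, and $\hat\beta_p\overset{p}{\to}\beta_0$ by \cite{fan2004}. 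It remains to obtain the limit law, keeping track of the enlarged error $\epsilon_p$. Concatenating over subjects as in Section~2.2, let $\tilde X^*,\tilde Z^*$ be the arrays with rows $\tilde X_i(t_{ij})^T,\tilde Z_i(t_{ij})^T$, let $\alpha^*$ be the vector with entries $\alpha(t_{ij})$, and let $\varepsilon^*$ be the vector with entries $\epsilon_i(t_{ij})$; since $S$ is symmetric and $Y^*-X^*\beta_0=\alpha^*+\tilde Z^*\gamma_0+\varepsilon^*$,
\[
\hat\beta_p-\beta_0=\bigl[\{(I-S)X^*\}^T(I-S)X^*\bigr]^{-1}\{(I-S)X^*\}^T(I-S)\bigl(\alpha^*+\tilde Z^*\gamma_0+\varepsilon^*\bigr).
\]

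The plan is to prove (i) $n^{-1}\{(I-S)X^*\}^T(I-S)X^*\overset{p}{\to}A_{\beta_0}$ and (ii) $n^{-1/2}\{(I-S)X^*\}^T(I-S)(\alpha^*+\tilde Z^*\gamma_0+\varepsilon^*)=n^{-1/2}\sum_{i=1}^n\int\tilde X_i(t)\{\tilde Z_i(t)^T\gamma_0+\epsilon_i(t)\}\,dN_i(t)+o_p(1)$, and then invoke Slutsky's theorem. Both rest on a uniform expansion of the local-linear smoother: under (C3), (C5), (C6), $SX^*$ estimates $E\{X(t)\mid T^*\}=E\{X(t)\}$, so $(I-S)X^*=\tilde X^*+R_X$ with $\|R_X\|_\infty=O_p\{h^2+(nh)^{-1/2}\}=o_p(1)$; also $(I-S)\alpha^*=\alpha^*-S\alpha^*$ is the (conditionally deterministic) local-linear approximation error of the smooth $\alpha(\cdot)$, so $\|(I-S)\alpha^*\|_\infty=o_p(1)$, while $\|S(\tilde Z^*\gamma_0)\|_\infty,\|S\varepsilon^*\|_\infty=O_p\{(nh)^{-1/2}\log n\}$ because $\tilde Z^*\gamma_0$ and $\varepsilon^*$ are centred. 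Substituting $(I-S)X^*=\tilde X^*+R_X$ into the quadratic form, the leading piece is $n^{-1}\tilde X^{*T}\tilde X^*=n^{-1}\sum_i\int\tilde X_i\tilde X_i^T\,dN_i\overset{p}{\to}A_{\beta_0}$ by the law of large numbers over the i.i.d.\ subjects (using $E\{dN_i(t)\}=\lambda(t)dt$ from (C3)), with invertibility from (C4), and the $R_X$ cross-terms are $o_p(1)$ since $\|R_X\|_\infty\overset{p}{\to}0$; this gives (i).

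For (ii), pair $(I-S)X^*=\tilde X^*+R_X$ with $(I-S)(\alpha^*+\tilde Z^*\gamma_0+\varepsilon^*)$. The leading contribution is $n^{-1/2}\tilde X^{*T}(\tilde Z^*\gamma_0+\varepsilon^*)=n^{-1/2}\sum_{i=1}^n\int\tilde X_i(t)\{\tilde Z_i(t)^T\gamma_0+\epsilon_i(t)\}\,dN_i(t)$, a sum of i.i.d.\ mean-$0$ vectors (mean $0$ because $E\{\tilde X_i\tilde Z_i^T\}=0$ by (C2) and $E\{\tilde X_i\epsilon_i\}=0$ by (C3)) with covariance $\Sigma_{\beta_0}=E\bigl\{\int\tilde X(t)(\tilde Z(t)^T\gamma_0+\epsilon(t))\,dN(t)\bigr\}^{\otimes 2}$, so the Lindeberg--L\'evy CLT gives convergence to $N(0,\Sigma_{\beta_0})$. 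All remaining terms are $o_p(n^{1/2})$: those paired with $\varepsilon^*$ are conditionally mean $0$ given the covariates and observation times and of order $\|S\varepsilon^*\|_\infty$ or $\|R_X\|_\infty$ times a bounded factor; those paired with $\tilde Z^*\gamma_0$ vanish because the dominant within-subject interaction again has mean $E\{\tilde X_i\tilde Z_i^T\}=0$ by (C2) while the cross-subject interactions are independent and centred; and $n^{-1/2}\tilde X^{*T}(I-S)\alpha^*$ has conditional variance $O_p(\|(I-S)\alpha^*\|_\infty^2)=o_p(1)$. The double sums over within- and between-subject observation pairs are controlled by the maximal inequalities of \cite{vw95}. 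Combining (i) and (ii),
\[
\sqrt n(\hat\beta_p-\beta_0)=A_{\beta_0}^{-1}n^{-1/2}\sum_{i=1}^n\int\tilde X_i(t)\{\tilde Z_i(t)^T\gamma_0+\epsilon_i(t)\}\,dN_i(t)+o_p(1)\overset{d}{\to}N\bigl(0,A_{\beta_0}^{-1}\Sigma_{\beta_0}A_{\beta_0}^{-1}\bigr).
\]

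The main obstacle is the uniform control of $S$ and the negligibility of the remainder terms: one must show that the local-linear bias and the stochastic smoothing errors of $\alpha(\cdot)$, $\tilde Z(\cdot)^T\gamma_0$ and $\epsilon(\cdot)$, once integrated against $\tilde X$ and summed over the $\sum_i M_i$ (within-subject dependent) observations, contribute only $o_p(n^{1/2})$. This is exactly where condition (C2) --- rather than full independence of $X$ and $Z$ --- is needed to annihilate the cross-terms, where (C6) pins down the stochastic order of the smoothing errors, and where the empirical-process tools of \cite{vw95} handle the non-i.i.d.\ observation structure; everything else reduces to a law of large numbers and a central limit theorem for sums indexed by subject.
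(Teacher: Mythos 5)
Your proposal is correct in its essentials and lands on exactly the same asymptotic linearization as the paper: $\sqrt n(\hat\beta_p-\beta_0)=A_{\beta_0}^{-1}n^{-1/2}\sum_i\int\tilde X_i(t)\{\tilde Z_i(t)^T\gamma_0+\epsilon_i(t)\}\,dN_i(t)+o_p(1)$, with (C2) and (C3) supplying the mean-zero property, the law of large numbers giving $A_{\beta_0}$, and the CLT plus Slutsky finishing. The route you take to get there, however, is different from the paper's. The paper never touches the hat-matrix representation in its proof: following \cite{fan92} and \cite{fan2004}, it observes that for each fixed $\beta$ the local linear intercept estimate $\hat\alpha(t;\beta)$ converges to the population target $\alpha(t;\beta)=\alpha_0+E\{Z(t)\}^T\gamma_0-E\{X(t)\}^T(\beta-\beta_0)$ at rate $O_p\{h^2+(nh)^{-1/2}\}$, decomposes the quadratic loss as $l_n(\beta)=I_{n,1}(\beta)+I_{n,2}(\beta)+I_{n,3}(\beta)$ where $I_{n,1}$ uses the limiting intercept, shows $I_{n,2}+I_{n,3}=O_p\{h^4+(nh)^{-1}\}$, and reads off the minimizer of the explicit quadratic $I_{n,1}$ in closed form. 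You instead work directly with $\hat\beta_p=\{X^{*T}(I-S)^T(I-S)X^*\}^{-1}X^{*T}(I-S)^T(I-S)Y^*$, expand $(I-S)X^*=\tilde X^*+R_X$, and control the smoothing remainders termwise. The two decompositions encode the same analytic facts (the smoother centres $X$ and absorbs $\alpha+E\{Z(t)\}^T\gamma_0$ into the nonparametric intercept), but yours requires explicit uniform control of $S$ acting on the centred noise vectors $\tilde Z^*\gamma_0$ and $\varepsilon^*$, whereas the paper's loss-function route delegates those rates to the cited kernel-regression literature. Neither argument fully closes the step from ``the perturbation of the objective (or of the design matrix) is $O_p\{h^2+(nh)^{-1/2}\}$'' to ``the induced shift in the estimator is $o_p(n^{-1/2})$'' --- this needs the centring/cancellation across subjects that you correctly flag as the main obstacle and attribute to empirical-process maximal inequalities --- so your proposal is at a comparable level of rigor to the paper's own proof while being somewhat more explicit about where the difficulty sits.
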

\begin{proof}
	From \cite{fan92}, for each given $\beta,$ the local linear estimator $\hat{\alpha}(t;\beta)$ in (2.5) is a consistent estimate of the function 
	\begin{align}\label{intercept}
		\alpha(t; \beta) = E\{Y(t) - X(t)^T\beta\} = \alpha_0 + E\{Z(t)\}^T\gamma_0 - E\{X(t)\}^T (\beta - \beta_0) 
	\end{align}
	and $\hat{\alpha}(t;\beta) - \alpha(t;\beta) = O_p\{h^2 + (nh)^{-1/2}\}.$ Let $l_n(\beta)$ denote the quadratic loss,
	\begin{eqnarray*}
		l_n(\beta) = n^{-1}\sum_{i=1}^{n}\sum_{j=1}^{M_i}\{Y_i(t_{ij}) - \hat{\alpha} (t_{ij}; \beta) - X_i(t_{ij})^T\beta \}^2.
	\end{eqnarray*}
	Then $\hat{\beta}_p$ minimizes the convex function $l_n(\beta).$ Decompose 
	$$
	l_n(\beta) = I_{n,1}(\beta) + I_{n,2}(\beta) + I_{n,3}(\beta),
	$$
	where
	$$
	I_{n,1}(\beta) = n^{-1}\sum_{i=1}^{n}\sum_{j=1}^{M_i}\{Y_i(t_{ij}) - \alpha(t_{ij}; \beta) - X_i(t_{ij})^T\beta \}^2,
	$$
	$$
	I_{n,2}(\beta) = 2 n^{-1}\sum_{i=1}^{n}\sum_{j=1}^{M_i}\{Y_i(t_{ij}) - \alpha(t_{ij};\beta) -X_i(t_{ij})^T\beta\}\{ \alpha(t_{ij};\beta) -\hat{\alpha}(t_{ij};\beta)\},
	$$
	and
	$$
	I_{n,3}(\beta) = n^{-1}\sum_{i=1}^{n}\sum_{j=1}^{M_i}\{\alpha(t_{ij};\beta) - \hat{\alpha}(t_{ij};\beta)\}^2.
	$$
	Following \cite{fan92},
	$$
	I_{n,2}(\beta) = O_p\{ I_{n,3}(\beta)\} = O_p(h^4+(nh)^{-1}).
	$$ 
	Using the model $Y(t) = \alpha_0 + Z(t)^T \gamma_0 + X(t)^T \beta_0 + \epsilon(t)$ and (\ref{intercept}), we have
	\begin{align*}
		I_{n,1}(\beta) &= n^{-1}\sum_{i=1}^{n}\int \Big( [ Z_i(t)-E\{Z_i(t)\} ]^T \gamma_0 + \epsilon(t) - [ X_i(t) - E\{X_i(t)\} ]^T (\beta - \beta_0)\Big)^2\\
		& \times dN_i(t)\\
		&=n^{-1}\sum_{i=1}^{n}\Big( [Z_i(t) - E\{Z_i(t)\} ]^T\gamma_0 + \epsilon(t)\Big)^2dN_i(t) \\
		&+ (\beta-\beta_0)^Tn^{-1}\sum_{i=1}^{n}\int [ X_i(t)-E\{X_i(t)\} ] [ X_i(t)-E\{X_i(t)\} ]^TdN_i(t)(\beta - \beta_0)\\
		&-2(\beta - \beta_0)^Tn^{-1}\sum_{i=1}^{n}\int [ X_i(t) - E\{X_i(t)\} ]\Big([Z_i(t) - E\{Z_i(t)\}]^T\gamma_0 + \epsilon(t)\Big)dN_i(t).
	\end{align*}
	The minimizer of this quadratic function is given by 
	\begin{equation*}
		\beta_0 + \Big\{ n^{-1}\sum_{i=1}^{n}\int \tilde{X}_i(t)\tilde{X}_i(t)^TdN_i(t) \Big\}^{-1} \Big\{n^{-1}\sum_{i=1}^{n}\int \tilde{X}_i(t)[\tilde{Z}_i(t)^T\gamma_0 + \epsilon(t)] dN_i(t)\Big\},
	\end{equation*}
	where $\tilde{X}_i(t) = X_i(t) - E\{X_i(t)\}$ and $\tilde{Z}_i(t) = Z_i(t) - E\{Z_i(t)\}.$
	Consequently
	\begin{align*}
		\hat{\beta}_p &= \beta_0 + \Big\{ n^{-1}\sum_{i=1}^{n}\int \tilde{X}_i(t)\tilde{X}_i(t)^TdN_i(t) \Big\}^{-1} \Big\{n^{-1}\sum_{i=1}^{n}\int \tilde{X}_i(t)[\tilde{Z}_i(t)^T\gamma_0 + \epsilon(t)] dN_i(t)\Big\}\\
		& + o_p(1).
	\end{align*}
	\textcolor{black}{Since $\int \tilde{X}_i(t)\tilde{X}_i(t)^TdN_i(t), i=1,\ldots,n.$ are i.i.d. random variables with mean $A_{\beta_0}$}, by the weak law of large numbers and (\ref{intercept}),
	$$
	n^{-1}\sum_{i=1}^{n}\int \tilde{X}_i(t)\tilde{X}_i(t)^TdN_i(t) \overset{p}{\rightarrow}  A_{\textcolor{black}{\beta_0}} . 
	$$
	\textcolor{black}{Similarly, $\int \tilde{X}_i(t)[\tilde{Z}_i(t)^T\gamma_0 + \epsilon(t)] dN_i(t), i=1,\ldots,n.$ are i.i.d. random variables with  $$E\left[\int \tilde{X}_i(t)[\tilde{Z}_i(t)^T\gamma_0 + \epsilon(t)] dN_i(t)\right]=\int E\left[\tilde{X}_i(t)\right]E\left[\tilde{Z}_i(t)^T\gamma_0 + \epsilon(t) dN_i(t)\right]=0,$$ and 
		\begin{align*}
			& Var\left[\int \tilde{X}_i(t)[\tilde{Z}_i(t)^T\gamma_0 + \epsilon(t)dN_i(t)\right] =E\left[\left(\int \tilde{X}_i(t)[\tilde{Z}_i(t)^T\gamma_0 + \epsilon(t)dN_i(t)\right)^2\right] <\infty.
	\end{align*}} By the central limit theorem,
	$$
	n^{-1}\sum_{i=1}^{n}\int \tilde{X}_i(t)[\tilde{Z}_i(t)^T\gamma_0 + \epsilon(t)] dN_i(t) \overset{d}{\rightarrow} N(0, \Sigma_\beta),
	$$
	where by (C2),
	\begin{eqnarray*}
		E\int \tilde{X}(t) [\tilde{Z}(t)^T \gamma_0 + \epsilon(t)]dN(t) &=& E\Big[E\int \tilde{X}(t)\{\tilde{Z}(t)^T\gamma_0 + \epsilon(t) \}dN(t)\mid \tilde{X}(t), \tilde{Z}(t) \Big]\\
		&=& E\Big[ \int \tilde{X}(t)\tilde{Z}(t)^T\gamma_0 \lambda(t)dt\Big]\\
		&=& \int  \lambda(t) E\{ \tilde{X}(t) \tilde{Z}(t)^T \} \gamma_0 dt\\
		&=&0_{p \times 1}.
	\end{eqnarray*}
	Therefore, by Slutsky theorem,
	\begin{equation*}
		\sqrt{n}(\hat{\beta}_p - \beta_0) \overset{d}{\rightarrow} N(0, A^{-1}_{\textcolor{black}{\beta_0}} \Sigma_{\textcolor{black}{\beta_0}} A^{-1}_{\textcolor{black}{\beta_0}}).
	\end{equation*}
\end{proof}

\subsection{Proof of Theorem 3}
\begin{theorem} \label{thm2}
	Under (C2)-(C7), the asymptotic distribution of $\hat{\beta}_c$ satisfies
	\begin{equation*}
		\sqrt{n}(\hat{\beta}_c - \beta_0) \overset{d}{\rightarrow} N(0, A_{\textcolor{black}{\beta_0}}^{-1}\Sigma_{\textcolor{black}{\beta_0}} A_{\textcolor{black}{\beta_0}}^{-1}),
	\end{equation*}
	where $A_\beta$ and $\Sigma_\beta$ are the same as those in Theorem $2$.
\end{theorem}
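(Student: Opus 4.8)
\textbf{Proof proposal for Theorem~\ref{thm2}.} The plan is to reduce the centering estimator to the same ``oracle'' linear statistic that drives Theorem~\ref{thm1}, so that the limit automatically inherits the matrices $A_{\beta_0}$ and $\Sigma_{\beta_0}$. Starting from the representation $\hat\beta_c-\beta_0=\{n^{-1}\sum_i\int\hat X_i\hat X_i^TdN_i\}^{-1}\,n^{-1}\sum_i\int\hat X_i\{\hat Y_i-\hat X_i^T\beta_0\}dN_i$, the first step is the algebraic identity obtained by taking the unconditional expectation of (\ref{true-model}), namely $m_Y(t)=\alpha_0+m_X(t)^T\beta_0+E\{Z(t)\}^T\gamma_0$. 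Writing $\Delta_X(t)=\hat m_X(t)-m_X(t)$ and $\Delta_Y(t)=\hat m_Y(t)-m_Y(t)$ and substituting the true model (\ref{true-model}) together with this identity yields
\[
\hat X_i(t)=\tilde X_i(t)-\Delta_X(t),\qquad \hat Y_i(t)-\hat X_i(t)^T\beta_0=\tilde Z_i(t)^T\gamma_0+\epsilon_i(t)-\Delta_Y(t)+\Delta_X(t)^T\beta_0 ,
\]
so the whole estimator is expressed through the centered processes $\tilde X_i,\tilde Z_i$ and the two Nadaraya--Watson errors.

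For the denominator, under (C5)-(C7) the Nadaraya--Watson estimator $\hat m_X$ is uniformly consistent for the continuous function $m_X$, so $\sup_t\|\Delta_X(t)\|=o_p(1)$; combined with (C3) and the weak law of large numbers applied to the i.i.d.\ terms $\int\tilde X_i(t)\tilde X_i(t)^TdN_i(t)$, the denominator tends in probability to $A_{\beta_0}=E\int\tilde X(t)\tilde X(t)^T\lambda(t)dt$, which is invertible by (C4). For the numerator, I would isolate the oracle term $G_n:=n^{-1}\sum_i\int\tilde X_i(t)\{\tilde Z_i(t)^T\gamma_0+\epsilon_i(t)\}dN_i(t)$. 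Its summands are i.i.d., and exactly as in the proof of Theorem~\ref{thm1} the mean vanishes because $E\{\tilde X(t)\tilde Z(t)^T\}=0$ by (C2) and $\epsilon$ is uncorrelated with $X$ by (C3); hence the central limit theorem gives $\sqrt n\,G_n\overset{d}{\rightarrow}N(0,\Sigma_{\beta_0})$ with $\Sigma_{\beta_0}=E\{\int\tilde X(t)(\tilde Z(t)^T\gamma_0+\epsilon(t))dN(t)\}^{\otimes 2}$. Slutsky's theorem then delivers the stated limit once the remaining pieces of the numerator are shown to be $o_p(n^{-1/2})$.

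Those remaining pieces are the terms produced by expanding $(\tilde X_i-\Delta_X)(\tilde Z_i^T\gamma_0+\epsilon_i-\Delta_Y+\Delta_X^T\beta_0)$ other than $G_n$: the cross terms $n^{-1}\sum_i\int\tilde X_i(t)\{-\Delta_Y(t)+\Delta_X(t)^T\beta_0\}dN_i(t)$ and $-n^{-1}\sum_i\int\Delta_X(t)\{\tilde Z_i(t)^T\gamma_0+\epsilon_i(t)\}dN_i(t)$, plus the quadratic-in-$\Delta$ terms. This is the main obstacle. I would control them using the uniform kernel rate $\sup_t(\|\Delta_X(t)\|+\|\Delta_Y(t)\|)=O_p\{h^2+(nh)^{-1/2}\}$ together with two structural observations: (i) $E\{\tilde X(t)\}=0$, so the deterministic $O(h^2)$ bias component of each $\Delta$ enters only through a mean-zero average with variance $O(h^4/n)=o(n^{-1})$; and (ii) the stochastic component of each $\Delta$ is itself a kernel-weighted i.i.d.\ average, which turns the cross terms into degenerate U-statistic-type double sums whose diagonal and off-diagonal contributions are bounded by moment calculations and empirical-process maximal inequalities under (C5)-(C7). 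Carrying out this bookkeeping is the delicate step, entirely parallel to the handling of $I_{n,2}$ and $I_{n,3}$ in the proof of Theorem~\ref{thm1}; once it is complete, $A_{\beta_0}$ and $\Sigma_{\beta_0}$ coincide with those of Theorem~\ref{thm1} by construction, which is precisely why $\hat\beta_p$ and $\hat\beta_c$ have the same asymptotic variance.
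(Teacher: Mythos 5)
Your proposal is correct in substance and lands on exactly the same limiting objects as the paper: both arguments reduce $\hat\beta_c$ to the oracle i.i.d.\ statistic $n^{-1}\sum_i\int\tilde X_i(t)\{\tilde Z_i(t)^T\gamma_0+\epsilon_i(t)\}dN_i(t)$, kill its mean via (C2)--(C3), and apply the law of large numbers plus the central limit theorem to obtain $A_{\beta_0}^{-1}\Sigma_{\beta_0}A_{\beta_0}^{-1}$. The packaging differs: you expand the closed-form estimator directly in the Nadaraya--Watson errors $\Delta_X,\Delta_Y$, whereas the paper works at the level of the estimating equation, asserts in one line that $U(\beta)=U^*(\beta)+o_p(1)$ after substituting the true means, proves consistency via the convexity lemma, and then splits $\sqrt{n}U^*(\beta)$ into an empirical-process term (controlled by a P-Donsker/Lipschitz-class argument) and a deterministic term $-\sqrt{n}A_{\beta_0}(\beta-\beta_0)$. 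The one place where you are actually more careful than the paper is the plug-in remainder: you correctly observe that the uniform rate $O_p\{h^2+(nh)^{-1/2}\}$ by itself is not $o_p(n^{-1/2})$ under (C6), so the cross terms must be tamed by exploiting $E\{\tilde X(t)\}=0$ and the degenerate U-statistic structure of the stochastic part of $\Delta_X,\Delta_Y$; the paper never confronts this, recording only an $o_p(1)$ error that suffices for consistency but not, as written, for the $\sqrt{n}$-normality. You leave that bookkeeping as a sketch, but so, in effect, does the paper; your diagnosis of where the real work lies is accurate, and completing it along the lines you indicate would yield a proof that is, if anything, tighter than the published one.
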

\begin{proof}
	As in \cite{fan92}, under (C5) and (C6), it can be shown that $\hat{m}_Y(t) - m_Y(t) = O_p(h^2 + (nh)^{-1/2})$ and $\hat{m}_X(t) - m_X(t) = O_p(h^2 + (nh)^{-1/2}).$ Then the estimating equation (2.11) becomes
	\begin{eqnarray*}
		U(\beta) &= &n^{-1}\sum_{i=1}^{n}\int \tilde{X}_i(t)\{\tilde{Y}_i(t) - \tilde{X}_i(t)^T\beta \}dN_i(t) + o_p(1)\\
		&=& U^{*}(\beta) + o_p(1).
	\end{eqnarray*}
	By the weak law of large numbers, 
	$$
	U^*(\beta) \overset{p}{\rightarrow} u^*(\beta),
	$$
	where 
	$$
	u^*(\beta) = E\int \{ \tilde{X}(t) \tilde{X}(t)^T \}\lambda(t)dt(\beta_0-\beta).
	$$
	By (C4), $\beta_0$ is the unique solution to $u^*(\beta)=0.$ As $\hat{\beta}_c$ solves the estimating equation (2.11), it follows from the convexity lemma \citep{ag1982} that $\hat{\beta}_c \overset{p}{\rightarrow}\beta_0.$
	
	We next show the asymptotic normality of $\hat{\beta}_c.$ Let $\mathcal P_n$ and $\mathcal P$ denote the empirical measure and the probability measure, respectively. Then
	\begin{align}
		\sqrt{n}U^*(\beta)&=\sqrt{n}(\mathcal P_n-\mathcal P)\left[\int \tilde X(t)\{\tilde Y(t)-\tilde X(t)^T\beta \}d N(t)\right]  \notag\\
		&+\sqrt{n}E\left[\int \tilde X(t)\{\tilde Y(t)-\tilde X(t)^T\beta \}d N(t)\right]. \tag{A.2} \label{Ch3: app eq u beta}
	\end{align}
	Consider the class of functions
	\begin{align*}
		\left\{\int \tilde X(t)\{\tilde Y(t)-\tilde X(t)^T\beta \}d N(t): |\beta-\beta_0|<\epsilon\right\}
	\end{align*}
	for a given constant $\epsilon.$ Note that the functions in this class are differentiable and their corresponding first-order derivatives are bounded according to condition (C4),
	therefore the functions in this class are Lipschitz continuous in $\beta$ and
	the Lipschitz constant is uniformly bounded by $\|\int \tilde X(t) \tilde X(t)^T dN(t)\|_{\infty}.$ It can be shown that this class is a P-Donsker class \citep{vw95}. As a result, we obtain that for $|\beta-\beta_0| <\epsilon$, the first term in (\ref{Ch3: app eq u beta}) is equal to 
	\begin{eqnarray*}
		&& \sqrt{n}(\mathcal P_n-\mathcal P)\left[\int \tilde X(t)\{\tilde Y(t)-\tilde X(t)^T\beta_0 \}d N(t)\right]+o_p(1)\\
		&=&\sqrt{n}[U^*(\beta_0)-E\{U^*(\beta_0)\}]+o_p(1).
	\end{eqnarray*}
	The second term in (\ref{Ch3: app eq u beta}) is equal to, 
	\begin{eqnarray*}
		&&\sqrt{n}E\left[\int \tilde X(t)\{\tilde Y(t)-\tilde X(t)^T(\beta-\beta_0)-\tilde X(t)^T\beta_0 \} dN(t)\right]\\
		&=&\sqrt{n}E\left[\int \tilde X(t) E\{\tilde Z(t)^T\gamma_0+\epsilon(t)-\tilde X(t)^T(\beta-\beta_0) \mid \tilde X(t) \} \lambda(t) dt\right]\\
		&=&-\sqrt{n}E\int \{\tilde X(t)\tilde X(t)^T\}\lambda(t)d t(\beta-\beta_0)\\
		&:=&-\sqrt{n}A_{\textcolor{black}{\beta_0}} (\beta-\beta_0).
	\end{eqnarray*}
	Consequently,
	\begin{eqnarray*}
		\sqrt{n}U^*(\beta)&=&\sqrt{n}[U^*(\beta_0)-E\{U^*(\beta_0)\}-A(\beta-\beta_0)]+o_p(1)\\
		&=&\sqrt{n}[U^*(\beta_0)-A(\beta-\beta_0)]+o_p(1),
	\end{eqnarray*}
	as $E\{U^*(\beta_0) \} =0.$
	Let $\phi_i=\int \tilde X_i(t)\{\tilde Y_i(t)-\tilde X_i(t)^T\beta_0 \}d N_i(t)$. Then $\phi_i$ are independent and identically distributed (i.i.d.) and $\sqrt{n}U^*(\beta_0)=n^{-1/2}\sum_{i=1}^n\phi_i$, and 
	\begin{align*}
		E(\phi_i)&=E\left[\int \tilde X(t) E\{\tilde Z(t)^T\gamma_0+\epsilon(t) \mid \tilde X(t) \} \lambda(t) dt\right]=0,\\
		\var(\phi_i)&=E(\phi_i^{\otimes2})=E\left[\iint \{\tilde Y(t)-\tilde X(t)^T\beta_0\}\{\tilde Y(s)-\tilde X(s)^T\beta_0\}\tilde X(t)\tilde X(s)^T dN(t) dN(s) \right]\\
		&=E\left[\iint \{\tilde Z(t)^T\gamma_0+\epsilon(t)\}\{\tilde Z(s)^T\gamma_0+\epsilon(s) \}\tilde X(t)\tilde X(s)^T dN(t) dN(s) \right] \\
		& := \Sigma_{\textcolor{black}{\beta_0}}.
	\end{align*}
	Then by the Central Limit Theorem,
	\begin{align*}
		\sqrt{n}A_\beta (\hat\beta_c-\beta_0) = \sqrt{n}U^*(\beta_0)+o_p(1) \overset{d}{\rightarrow} N(0,\Sigma_{\textcolor{black}{\beta_0}}).
	\end{align*}
	
\end{proof}

\subsection{Proof of Theorem 4}
\begin{theorem} \label{thm3}
	Under (C3)-(C5),(C7)-(C10), the asymptotic distribution of $\hat{\gamma}$ satisfies 
	\begin{equation*}
		\sqrt{nh}(\hat{\gamma} - \gamma_0) \overset{d}{\rightarrow}N(0, A_{\textcolor{black}{\gamma_0}}^{-1}\Sigma_{\textcolor{black}{\gamma_0}} A_{\textcolor{black}{\gamma_0}}^{-1}),
	\end{equation*}
	where
	\begin{align*}
		A_{\textcolor{black}{\gamma_0}}&=\int E\left\{ Z(s)Z(s)^T\right\}\eta(s,s)ds\quad\text{and}\\
		\Sigma_{\textcolor{black}{\gamma_0}}&= \int K(z)^2dz\int E\left\{Z(s)Z(s)^T\right\} \eta(s,s)\sigma^2(s) ds.
	\end{align*}
\end{theorem}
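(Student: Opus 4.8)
The plan is to follow the single-covariate argument of \cite{cao15} for the kernel-weighted estimator, while carefully tracking the extra term created by plugging the Step~1 estimator $\hat\beta$ into \eqref{ee-asyn}. Absorbing the intercept into $X$ as in the text and solving $U^f(\gamma)=0$ yields the exact identity
$$\hat\gamma-\gamma_0=\widehat A_n^{-1}\,n^{-1}\sum_{i=1}^n\iint K_h(t-s)Z_i(s)\bigl\{Y_i(t)-X_i(t)^T\hat\beta-Z_i(s)^T\gamma_0\bigr\}dN_i(t,s),$$
where $\widehat A_n=n^{-1}\sum_i\iint K_h(t-s)Z_i(s)Z_i(s)^T dN_i(t,s)$. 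Substituting $Y_i(t)-X_i(t)^T\beta_0=Z_i(t)^T\gamma_0+\epsilon_i(t)$, the bracket becomes $\{Z_i(t)-Z_i(s)\}^T\gamma_0+\epsilon_i(t)-X_i(t)^T(\hat\beta-\beta_0)$, so the numerator decomposes as $B_1+B_2+B_3$, with $B_1$ the ``$\{Z(t)-Z(s)\}$'' piece, $B_2$ the ``$\epsilon$'' piece, and $B_3=-\{n^{-1}\sum_i\iint K_h(t-s)Z_i(s)X_i(t)^T dN_i(t,s)\}(\hat\beta-\beta_0)$.

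First I would establish $\widehat A_n\overset{p}{\rightarrow}A_{\gamma_0}$: taking expectations, changing variables $t=s+hu$, and using the twice-differentiability of $\E\{Z(s)Z(s)^T\}$ and $\eta$ from (C8)--(C9) together with $\int K=1$ gives $\E(\widehat A_n)\to\int\E\{Z(s)Z(s)^T\}\eta(s,s)ds=A_{\gamma_0}$, while $\mathrm{Var}(\widehat A_n)=O\{(nh)^{-1}\}=o(1)$ by the standard diagonal-term bound (each $K_h$ contributes an $h^{-1}$, since $dN_i^2=dN_i$ on the diagonal, whereas distinct observation pairs from the same subject contribute only $O(1)$ via the conditional intensity $f$ of (C8)). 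Applying the same device to $Z_i(s)X_i(t)^T$ shows the matrix in $B_3$ is $O_p(1)$; since Step~1, run on the synchronous $(Y_i,X_i)$ observations, gives $\hat\beta-\beta_0=O_p(n^{-1/2})$ by Theorem~\ref{thm1} or \ref{thm2}, we obtain $B_3=O_p(n^{-1/2})$ and hence $\sqrt{nh}\,B_3=O_p(\sqrt h)=o_p(1)$. This is the crux of the two-step argument: because $\hat\beta$ converges at the $\sqrt n$ rate while $\hat\gamma$ only attains $\sqrt{nh}$, the Step~1 error washes out asymptotically. For $B_1$, a second-order Taylor expansion in $h$ of $\E\{Z(s)Z(t)^T\}-\E\{Z(s)Z(s)^T\}$ about $t=s$, combined with $\int uK(u)du=0$ from (C5), yields $\E(B_1)=O(h^2)$, so $\sqrt{nh}\,\E(B_1)=O(\sqrt{nh^5})\to0$ by (C10); and $L^2$-continuity of $Z$ (implied by the moment smoothness in (C9)) makes the leading diagonal term of $\mathrm{Var}(B_1)$ of order smaller than $h^{-1}$, so that $B_1-\E(B_1)=o_p\{(nh)^{-1/2}\}$ and $\sqrt{nh}\,B_1=o_p(1)$.

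The substantive term is $B_2=n^{-1}\sum_i\zeta_i$ with $\zeta_i=\iint K_h(t-s)Z_i(s)\epsilon_i(t)dN_i(t,s)$. Under (C3), $\E(\zeta_i)=0$, and the $t=s+hu$ substitution together with continuity of $\sigma^2,\eta$ and $\E\{Z(s)Z(s)^T\}$ identifies the leading (diagonal) part of $\mathrm{Var}(\zeta_i)$ as $h^{-1}\int K(z)^2dz\int\E\{Z(s)Z(s)^T\}\sigma^2(s)\eta(s,s)ds\,\{1+o(1)\}=h^{-1}\Sigma_{\gamma_0}\{1+o(1)\}$; contributions from distinct observation pairs and from the autocovariance of $\epsilon$ are only $O(1)$ by (C8)--(C9), and the $\|\cdot\|_\infty$ bound in (C9) secures finiteness. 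Writing $\sqrt{nh}\,B_2=n^{-1/2}\sum_i(\sqrt h\,\zeta_i)$ and checking a Lyapunov condition for this triangular array — the $(2+\delta)$-th moment of $\sqrt h\,\zeta_i$ is $O(h^{-\delta/2})$, so the Lyapunov ratio is $O\{(nh)^{-\delta/2}\}\to0$ — gives $\sqrt{nh}\,B_2\overset{d}{\rightarrow}N(0,\Sigma_{\gamma_0})$. Combining with $\widehat A_n\overset{p}{\rightarrow}A_{\gamma_0}$ and Slutsky's theorem delivers $\sqrt{nh}(\hat\gamma-\gamma_0)\overset{d}{\rightarrow}N(0,A_{\gamma_0}^{-1}\Sigma_{\gamma_0}A_{\gamma_0}^{-1})$. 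I expect the main obstacle to be the exact variance computation for $B_2$: pinning down the $h^{-1}\Sigma_{\gamma_0}$ leading coefficient and rigorously controlling the off-diagonal, distinct-pair remainders through the conditional intensity of (C8) is where the calculation is heaviest; a secondary delicate point is verifying that the stochastic fluctuation of $B_1$ is genuinely $o_p\{(nh)^{-1/2}\}$ and not merely $O_p\{(nh)^{-1/2}\}$.
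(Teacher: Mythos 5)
Your proposal is correct and follows essentially the same route as the paper's proof: the same three-way decomposition into a smoothing-bias term from $Z(t)-Z(s)$ (the paper's $I_1$, your $B_1$, killed by the $O(h^2)$ Taylor bound and $nh^5\to 0$), a plug-in term from $\hat\beta-\beta_0$ (the paper's $I_2$, your $B_3$, of order $O_p(\sqrt{h})$ after $\sqrt{nh}$-scaling because Step 1 is $\sqrt{n}$-consistent), and a main noise term whose variance has leading diagonal contribution $h^{-1}\Sigma_{\gamma_0}$ with $O(1)$ distinct-pair remainders, finished by a Lyapunov CLT and Slutsky. The one methodological difference is that you exploit linearity of $U^f$ in $\gamma$ to start from the exact identity $\hat\gamma-\gamma_0=\widehat A_n^{-1}(B_1+B_2+B_3)$, which lets you dispense with the P-Donsker stochastic-equicontinuity step the paper uses to replace $(\hat\beta,\gamma)$ by $(\beta_0,\gamma_0)$ inside the centered empirical process; this is a legitimate simplification here (though the paper's formulation generalizes more readily to nonlinear link functions), and your closing remarks correctly identify the variance computation for $B_2$ and the $o_p\{(nh)^{-1/2}\}$ fluctuation bound for $B_1$ as the places where the real work lies.
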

\begin{proof}
	Plugging in $\hat{\beta}_p$ or $\hat{\beta}_c$ into (3.13), by the fact that $\hat{\beta}_p = \beta_0 + O_p(n^{-1/2})$ and $\hat{\beta}_c = \beta_0 + O_p(n^{-1/2}),$ we have
	\begin{align}\label{ee-change}
		&\textcolor{black}{E\{U^f( \gamma) \} } \notag\\
		&= n^{-1}\sum_{i=1}^{n} \textcolor{black}{E \iint K(z) Z_i(s)\{Y_i(s+hz) - Z_i(s)^T\gamma - X_i(s+hz)^T\beta_0 \}\eta(s+hz, s)dsdz} \notag\\
		&+o_p(1),\tag{A.3}
	\end{align}
	where we use change of variable $t = s + hz.$ 
	\textcolor{black}{The right hand side of (\ref{ee-change}) equal to}
	\begin{eqnarray*}
		&&n^{-1}\sum_{i=1}^{n}E\iint K(z)Z_i(s)\{ Z_i(s+hz)^T\gamma_0 - Z_i(s)^T \gamma\} \{\eta(s,s) + \frac{\partial \eta(s,s)}{\partial x} hz \}dsdz +o_p(1)\\ 
		&=& n^{-1}\sum_{i=1}^{n}\int E\{ Z_i(s)Z_i(s)^T\}(\gamma_0 - \gamma) \eta(s,s)ds + O(h^2).
	\end{eqnarray*}
	By weak law of large numbers, we have
	\begin{equation*}
		U^f(\hat{\beta}, \gamma) \overset{p}{\rightarrow} u^f(\gamma),
	\end{equation*}
	where 
	$$
	u^f(\gamma) = \int E\{ Z(s)Z(s)^T\} \eta(s,s)ds (\gamma_0 - \gamma),
	$$
	as $n\rightarrow \infty$ and $h \rightarrow 0.$
	As $\int E\{Z(s)Z(s)^T \}\eta(s,s)ds$ is positive definite, $\gamma_0$ is the unique solution to $u^f(\gamma) =0.$ As $\hat{\gamma}$ solves the estimating equation (3.13), it follows from the convexity lemma \citep{ag1982} that $\hat{\gamma}\overset{p}{\rightarrow} \gamma_0. $
	
	We next show the asymptotic normality of $\hat{\gamma}.$ Let $\mathcal P_n$ and $\mathcal P$ denote the empirical measure and the true probability measure, respectively. We have
	\begin{align}\label{u beta gamma}
		\sqrt{nh}U^f(\hat{\beta}, \gamma)&=\sqrt{nh}(\mathcal P_n-\mathcal P)\left[\iint K_h(t-s) Z(s)\{ Y(t)-Z(s)^T\gamma- X(t)^T \hat{\beta} \} dN(t,s)\right]\notag \\
		&+\sqrt{nh} E\left[\iint K_h(t-s) Z(s)\{ Y(t)-Z(s)^T\gamma- X(t)^T \hat{\beta} \} d N(t,s)\right]. \tag{A.4}
	\end{align}
	For $i$th subject, consider the class of functions
	\begin{align*}
		\Big\{& \sqrt{h} \iint K_h(t-s) Z_i(s)\{ Y_i(t)-Z_i(s)^T\gamma- X_i(t)^T\beta \} dN_i(t,s): \Big.\\
		&\Big. |\beta-\beta_0|<\epsilon_1, |\gamma-\gamma_0|<\epsilon_2 \Big\}
	\end{align*}
	for given constants $\epsilon_1$ and $\epsilon_2$. It can be shown that this class is a P-Donsker class \citep{vw95}. As a result, we obtain that for $|\beta-\beta_0|<M_1n^{-1/2}, |\gamma-\gamma_0| <M_2(nh)^{-1/2}$, where $M_1$ and $M_2$ are some constants, the first term in (\ref{u beta gamma}) is equal to
	\begin{align*}
		&\sqrt{nh}(\mathcal P_n-\mathcal P)\iint K_h(t-s) Z(s)\{ Y(t)-Z(s)^T\gamma_0- X(t)^T\beta_0 \} d N_i(t,s)+o_p(1)\\
		&=\sqrt{nh}[U^f(\beta_0,\gamma_0)- E\{U^f(\beta_0,\gamma_0)\}].
	\end{align*}
	For the second term in (\ref{u beta gamma}), we have 
	\begin{align*}
		&\sqrt{nh} E\left[\iint K_h(t-s) Z(s)\{ Y(t)-Z(s)^{T}(\gamma-\gamma_0)- X(t)^T(\hat{\beta}-\beta_0)-Z(s)^T\gamma_0-X(t)^T\beta_0 \}\right.\\&\left.\times\vphantom{\int} dN(t,s)\right] \\
		&=\sqrt{nh} E\left[\iint K_h(t-s) Z(s)\{ Y(t)-Z(s)^T\gamma_0- X(t)^T\beta_0 \}d N(t,s)\right] \\
		&-\sqrt{nh}\iint K_h(t-s) E\{Z(s)X(t)^T\} \eta(t,s) dtds( \hat{\beta}-\beta_0) \\
		&-\sqrt{nh}\iint K_h(t-s) E\{Z(s)Z(s)^T\}\eta\{t,s)dtds(\gamma-\gamma_0)\\
		&=I_1-I_2-I_3.
	\end{align*}
	For $I_1$, let $\psi_i=\sqrt{h}\iint K_h(t-s) Z_i(s)\{ Y_i(t)-Z_i(s)^T\gamma_0- X_i(t)^T\beta_0 \}d N_i(t,s)$. Then $\psi_i$s are i.i.d. and $\sqrt{nh}U(\beta_0,\gamma_0)=n^{-1/2}\sum_{i=1}^n\psi_i$. Let $t=s+hz$, by conditions (C5), (C9) and (C10), we have
	\begin{align*}
		I_1&=\sqrt{n}E(\psi_i)=\sqrt{nh}\iint K_h(t-s) E\{Z(s)Z(t)^T-Z(s)Z(s)^T\} \eta(t,s) dt ds\gamma_0\\
		&=\sqrt{nh}\iint K(z) E\{Z(s)Z(s+hz)^{T}-Z(s)Z(s)^T\} \eta(s+hz,s) dzds\gamma_0\\
		& = o(1).
	\end{align*}
	Note that by Theorem 1 and Theorem 2, $I_2=O_p(\sqrt{h})$. 
	
	For $I_3$,
	\begin{align*}
		I_3&=\sqrt{nh}\iint K(z) E\{Z(s)Z(s)^T\}\eta(s,s) dzds(\gamma-\gamma_0) + O(n^{1/2}h^{5/2})\\
		&:=\sqrt{nh}A_{\textcolor{black}{\gamma_0}}(\gamma-\gamma_0)+O(n^{1/2}h^{5/2}). 
	\end{align*}
	Consequently,
	\begin{align*}
		\sqrt{nh}U^f(\hat{\beta},\gamma)&=\sqrt{nh}[U^f(\beta_0,\gamma_0) - E\{ U^f(\beta_0, \gamma_0)\}]-\sqrt{nh}A_{\textcolor{black}{\gamma_0}}(\gamma-\gamma_0)+O(n^{1/2}h^{5/2}).\end{align*}
	As $\hat{\gamma}$ solves the estimating equation (3.13), we have 
	$$
	\sqrt{nh} A_{\textcolor{black}{\gamma_0}} (\hat{\gamma} - \gamma_0) = \sqrt{nh}[U^f(\beta_0, \gamma_0) - E\{ U^f(\beta_0, \gamma_0)\}]+ O(n^{1/2}h^{5/2}).
	$$
	The variance of $\sqrt{nh}U^f(\beta_0,\gamma_0)$ can be computed using $\psi_i$ defined earlier
	\begin{align*}
		& \var(\psi_i)\\
		&=E[\var\{\psi_i|X(t),Z(s),N(t,s),t,s \in [0, 1]\}]+\var[E\{\psi_i|X(t),Z(s),N(t,s),t,s \in [0, 1]\}]\\
		&=J_1+J_2,
	\end{align*}
	where
	\begin{align*}
		J_1&=hE\left[\iiiint K_h(t_1-s_1)K_h(t_2-s_2) Z(s_1)Z(s_2)^T\right. \\
		&\left.\times E\{Y(t_1)Y(t_2)|X(t),Z(s),N(t,s),t,s \in [0, 1]\} dN(t_1,s_1) dN(t_2, s_2) \vphantom{\int} \right]\\
		&-hE\left[\iiiint K_h(t_1-s_1)K_h(t_2-s_2) Z(s_1)Z(s_2)^T \right.\\
		& \left. \times E\{Y(t_1)|X(t),Z(s),N(t,s),t,s \in [0, 1] \} \right.\\
		&\left.\times E\{Y(t_2)|X(t),Z(s),N(t,s),t,s \in [0, 1] \} dN(t_1,s_1) dN(t_2, s_2) )\vphantom{\int} \right]\\
		&=h E\left[\iiiint K_h(t_1-s_1)K_h(t_2-s_2) Z(s_1)Z(s_2)^T E\{\epsilon(t_1)\epsilon(t_2) \}dN(t_1,s_1)dN(t_2, s_2)\right]\\
		&=\int K(z)^2 dz \int E\{Z(s)Z(s)^T  \}\eta(s,s)\sigma^2(s)ds + O(h^2)\\
		&:=\Sigma_\gamma + O(h^2),
	\end{align*}
	\begin{align*}
		J_2&=h\var\left[\iint K_h(t-s)Z(s)\{Z(t)-Z(s)\}^T\gamma_0 dN(t,s)\right ]\\
		&=hE\left[\iiiint K_h(t_1-s_1)K_h(t_2-s_2)Z(s_1)\{Z(t_1)-Z(s_1)\}^T\gamma_0\gamma_0^T\{Z(t_2)-Z(s_2)\}\right.\\
		&\left.\times Z(s_2)^T dN(t_1,s_1) dN(t_2,s_2)\vphantom{\int} \right ]\\
		&-h\left(\iint K_h(t-s) E[Z(s)\{Z(t)-Z(s)\}]^T\gamma_0 dN(t,s)\right )^{\otimes2}\\
		&=O(h).
	\end{align*}
	To prove the asymptotic normality, we verify the Lyapunov condition. Note that
	$$(nh)^{1/2}U^f(\beta_0,\gamma_0) = n^{-1/2}\sum_{i=1}^n\psi_i = \sum_{i=1}^n n^{1/2} n^{-1} \psi_i,$$
	then similar to the calculation of variance,
	$$\sum_{i=1}^n E \{\mid n^{1/2} n^{-1} \psi_i - E(n^{1/2} n^{-1} \psi_i) \mid^3\} = nO\{(nh)^{3/2} n^{-3} h^{-2}\} = O\{(nh)^{-1/2}\}.$$
	Thus 
	$$
	\mbox{var}(\psi_i) = \Sigma_{\textcolor{black}{\gamma_0}} + O(h).
	$$
	Consequently,
	$$
	\sqrt{nh}[U^f(\beta_0, \gamma_0) - E\{ U^f(\beta_0, \gamma_0)\}]\overset{d}{\rightarrow} N(0, \Sigma_{\textcolor{black}{\gamma_0}}).
	$$
	As $n^{1/2}h^{5/2} \rightarrow 0,$ we have
	\begin{equation*}
		\sqrt{nh}A_{\gamma_0}(\hat{\gamma} - \gamma)  \overset{d}{\rightarrow} N(0, \Sigma_{\textcolor{black}{\gamma_0}}). 
	\end{equation*}
\end{proof}

\subsection{Proof of Theorem 5}
\begin{theorem}\label{Ch4: cor cao}
	Under conditions (C8), C(10) and (C11),
		let $\hat{\theta}_w = (\hat{\beta}_w^T, \hat{\gamma}_w^T)^T$ and $\theta_0=(\beta_0^T, \gamma_0^T)^T,$ the asymptotic distributions of $\hat\theta_w$ satisfies
		\begin{align*}
			\sqrt{nh}(\hat\theta_w-\theta_0 )&\stackrel{d}{\to}N(0,A_{\textcolor{black}{\theta_0}}^{-1}\Sigma_{\textcolor{black}{\theta_0}} A_{\textcolor{black}{\theta_0}}^{-1}),
		\end{align*}
		where \begin{align*}
			A_{\textcolor{black}{\theta_0}}&=\int\E\{W(t,t)W(t,t)^T\}\eta(t,t)\df t \quad\text{and}\\
			\Sigma_{\textcolor{black}{\theta_0}}&=\int K(z)^2\df z\int\E\{W(t,t)W(t,t)^T \}\sigma^2(t)\eta(t,t)\df t.
		\end{align*}
	\end{theorem}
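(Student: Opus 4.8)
The plan is to follow the empirical-process argument in the proof of Theorem~4 (and in \cite{cao15}) almost verbatim, but with the stacked parameter $\theta=(\beta^T,\gamma^T)^T$ and the stacked covariate $W(t,s)=\{X(t)^T,Z(s)^T\}^T$ treated jointly, so that now \emph{both} blocks are smoothed and share the $\sqrt{nh}$ rate. Writing $Z_i(s)^T\gamma+X_i(t)^T\beta=W_i(t,s)^T\theta$, equation~(\ref{ee2-1}) reads $U_w(\theta)=n^{-1}\sum_{i=1}^n\iint K_h(t-s)W_i(t,s)\{Y_i(t)-W_i(t,s)^T\theta\}\,dN_i(t,s)$. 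First I would prove consistency: substituting $Y_i(t)=X_i(t)^T\beta_0+Z_i(t)^T\gamma_0+\epsilon_i(t)$ (with the intercept absorbed into the leading coordinate of $X$), taking expectations, changing variables $t=s+hz$, and Taylor-expanding $\eta(s+hz,s)$ and $E\{W(s+hz,s)W(s+hz,s)^T\}$ about $z=0$ using (C8) and (C11), one obtains $U_w(\theta)\overset{p}{\to}u_w(\theta)=A_{\theta_0}(\theta_0-\theta)$, with $A_{\theta_0}$ positive definite by (C11); hence $\theta_0$ is the unique zero of $u_w$, and since $U_w$ is the (negative) gradient of a convex quadratic objective, the convexity lemma \citep{ag1982} gives $\hat\theta_w\overset{p}{\to}\theta_0$. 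Note that, unlike the two-step estimator, this does not invoke the uncorrelatedness condition (C2).

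Next I would establish asymptotic normality via the split used in (\ref{u beta gamma}), namely $\sqrt{nh}\,U_w(\theta)=\sqrt{nh}(\mathcal P_n-\mathcal P)[\,\cdot\,]+\sqrt{nh}\,\mathcal P[\,\cdot\,]$. For the stochastic-equicontinuity term I would show that the class $\{\sqrt{h}\iint K_h(t-s)W_i(t,s)\{Y_i(t)-W_i(t,s)^T\theta\}\,dN_i(t,s):|\theta-\theta_0|<\epsilon\}$ is $P$-Donsker \citep{vw95} --- the integrands are Lipschitz in $\theta$ with an envelope controlled by (C11) --- so on the $(nh)^{-1/2}$-neighbourhood of $\theta_0$ the first term reduces to $\sqrt{nh}[U_w(\theta_0)-E\{U_w(\theta_0)\}]+o_p(1)$. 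For the drift term I would expand $Y_i(t)-W_i(t,s)^T\theta=\{Z_i(t)-Z_i(s)\}^T\gamma_0+\epsilon_i(t)-W_i(t,s)^T(\theta-\theta_0)$, which contributes (i) a ``mismatch'' bias $\sqrt{nh}\iint K_h(t-s)E\{W(t,s)(Z(t)-Z(s))^T\}\gamma_0\,\eta(t,s)\,dt\,ds=O(\sqrt{nh}\,h^2)=o(1)$ since $nh^5\to0$ by (C10), and (ii) $-\sqrt{nh}\,A_{\theta_0}(\theta-\theta_0)+O(n^{1/2}h^{5/2})$. Combining, $\sqrt{nh}\,A_{\theta_0}(\hat\theta_w-\theta_0)=\sqrt{nh}[U_w(\theta_0)-E\{U_w(\theta_0)\}]+o_p(1)$.

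It then remains to find the limiting law of $\sqrt{nh}\,U_w(\theta_0)=n^{-1/2}\sum_{i=1}^n\psi_i$ with $\psi_i=\sqrt{h}\iint K_h(t-s)W_i(t,s)[\{Z_i(t)-Z_i(s)\}^T\gamma_0+\epsilon_i(t)]\,dN_i(t,s)$. As in the proof of Theorem~4 I would decompose $\mathrm{var}(\psi_i)=E[\mathrm{var}(\psi_i\mid\mathcal W)]+\mathrm{var}[E(\psi_i\mid\mathcal W)]=:J_1+J_2$, where $\mathcal W$ collects the covariate and counting-process paths; conditioning removes the $\epsilon$ cross-terms, and after $t=s+hz$ one finds $J_1\to\int K(z)^2\,dz\int E\{W(t,t)W(t,t)^T\}\sigma^2(t)\eta(t,t)\,dt=\Sigma_{\theta_0}$ while $J_2=O(h)=o(1)$; the third-moment bound $\sum_{i}E|n^{-1/2}\psi_i|^3=O\{(nh)^{-1/2}\}\to0$ verifies the Lyapunov condition, so $\sqrt{nh}[U_w(\theta_0)-E\{U_w(\theta_0)\}]\overset{d}{\to}N(0,\Sigma_{\theta_0})$ and Slutsky's theorem yields $\sqrt{nh}(\hat\theta_w-\theta_0)\overset{d}{\to}N(0,A_{\theta_0}^{-1}\Sigma_{\theta_0}A_{\theta_0}^{-1})$. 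The step I expect to be most delicate is the $P$-Donsker verification for the bandwidth-rescaled family, together with the bookkeeping showing that the off-diagonal-in-time quantities $\eta(t,s)$ and $E\{W(t,s)W(t,s)^T\}$ (evaluated at $t\neq s$) concentrate on their diagonal values $\eta(t,t)$ and $E\{W(t,t)W(t,t)^T\}$ with remainders small relative to $(nh)^{-1/2}$; this is precisely where (C8) and the undersmoothing in (C10) enter, and it is also what forces the slower $\sqrt{nh}$ rate for $\hat\beta_w$ relative to $\hat\beta_p$ and $\hat\beta_c$.
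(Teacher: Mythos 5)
Your proposal is correct and follows essentially the same route as the paper's own proof: consistency via the change of variables $t=s+hz$ and the convexity lemma, then asymptotic normality through the $(\mathcal P_n-\mathcal P)$ decomposition, a $P$-Donsker argument for the bandwidth-rescaled class, the drift expansion isolating $-\sqrt{nh}A_{\theta_0}(\theta-\theta_0)$ plus $O(n^{1/2}h^{5/2})$ bias terms, and the $J_1+J_2$ conditional-variance split with a Lyapunov check. Your mismatch term $\{Z_i(t)-Z_i(s)\}^T\gamma_0$ is identical to the paper's $\{W_i(t,t)-W_i(t,s)\}^T\theta_0$ since the $X$-block of $W$ does not depend on $s$, so there is no substantive difference.
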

	\begin{proof}
		$\hat{\theta}_w$ solves the following estimating equation
		\begin{align}
			U_w(\theta) &= n^{-1}\sum_{i=1}^{n}\iint K_h(t-s)W_i(t,s)\{Y_i(t) - W_i(t,s)^T\theta \}dN_i(t,s). \tag{A.5} \label{Ch4: ee theta}
		\end{align}
		
		We first show the consistency of $\hat\theta_w$. Let $s=t+hz$, then
		By the law of large numbers, and let $s=t+hz$, then
		\begin{align*}
			U_w(\theta)&=n^{-1}\sum_{i=1}^{n}\iint K_h(t-s)W_i(t,s)\{Y_i(t) - W_i(t,s)^{T}{\theta} \}dN_i(t,s)\to u_\theta(\theta)
		\end{align*}
		as $n\to\infty$, where
		\begin{align*}
			u_w(\theta)&=\int \E\{W(t,t)W(t,t)^T\}\eta(t,t)d t(\theta_0-\theta).
		\end{align*}
		Under condition (C11), $\theta_0$ is the unique solution to $u_w(\theta)=0$. As $\hat\theta_w$ solves the estimation equation $U_w(\theta)=0$, it follows from the convexity lemma \citep{ag1982} that $\hat\theta_w \to \theta_0$ in probability.
		
		We next show the asymptotic normality of $\hat\theta_w$. Let $\mathcal P_n$ and $\mathcal P$ denote the empirical measure and the true probability measure, respectively. Then we have,
		\begin{align}
			\sqrt{nh}U_w(\theta)&=\sqrt{nh}(\mathcal P_n-\mathcal P)\left[\iint K_h(t-s)W(t,s)\{Y(t) - W(t,s)^T{\theta} \}dN(t,s)\right]\notag \\
			&+\sqrt{nh}\E\left[\iint K_h(t-s)W(t,s)\{Y(t) - W(t,s)^{T}{\theta} \}dN(t,s)\right].\tag{A.6} \label{Ch4: eq cor PP}
		\end{align}
		Consider the class of functions
		\begin{align*}
			\left\{\sqrt{h}\iint K_h(t-s)W(t,s)\{Y(t) - W(t,s)^{T}{\theta} \}dN(t,s):|\theta-\theta_0|<\epsilon \right\}
		\end{align*}
		for a given constant $\epsilon.$ Note that the functions in this class are differentiable and their corresponding first-order derivatives are bounded according to condition (C11), therefore the functions in this class are Lipschitz continuous in $\theta$ and the Lipschitz constant is uniformly bounded by $\|\iint K_h(t-s) W(t,s) W(t,s)^T dN(t,s)\|_{\infty}.$ It can be shown that this class is a P-Donsker class \citep{vw95}. As a result, we obtain that for $|\theta-\theta_0|<M(nh)^{-1/2}$ and some constant $M$, the first term in (\ref{Ch4: eq cor PP}) is equal to
		\begin{align*}
			&\sqrt{nh}(\mathcal P_n-\mathcal P)\left[\iint K_h(t-s)W(t,s)\{Y(t) - W(t,s)^{T}{\theta_0} \}dN(t,s)\right]+o_P(1)\\
			&=\sqrt{nh}[U_w(\theta_0)-\E\{U_w(\theta_0)\} ]+o_P(1).
		\end{align*}
		As to the second term in (\ref{Ch4: eq cor PP}), it's equal to
		
		\begin{align*}
			&\sqrt{nh}\E\left[\iint K_h(t-s)W(t,s)\{Y(t) - W(t,s)^{T}{\theta} \}dN(t,s)\right]\\
			&=\sqrt{nh}\E\Big(\iint K_h(t-s)W(t,s)[W(t,s)^T(\theta_0-\theta)+\{W(t,t)-W(t,s)\}^T\theta_0+\epsilon(t) ]dN(t,s)\Big)\\
			&=\sqrt{nh}\E\left\{\iint K_h(t-s)W(t,s)W(t,s)^TdN(t,s)\right\}(\theta_0-\theta)\\
			&+\sqrt{nh}\E\left[\iint K_h(t-s)W(t,s)\{W(t,t)-W(t,s)\}^TdN(t,s)\right]\theta_0:=I_1+I_2,
		\end{align*}
		where, by letting $s=t+hz$,
		\begin{align*}
			I_1&=\sqrt{nh}\iint K(z)\E\{W(t,t+hz)W(t,t+hz)^T\}\eta(t,t+hz)d td z(\theta_0-\theta)\\
			&=\sqrt{nh}\int\E\{W(t,t)W(t,t)^T\}\eta(t,t)d t(\theta_0-\theta)+O(n^{1/2}h^{5/2})\\
			&:=\sqrt{nh}A_\theta(\theta_0-\theta)+O(n^{1/2}h^{5/2}),\\
			I_2&=\sqrt{nh}\iint K(z)\E[W(t,t+hz)\{W(t,t)-W(t,t+hz)\}^T]\eta(t,t+hz)d td z\theta_0\\
			&=O(n^{1/2}h^{5/2}).
		\end{align*}
		Consequently,
		\begin{align*}
			\sqrt{nh}U_w(\theta)&=\sqrt{nh}[U_w(\theta_0)-\E\{U_w(\theta_0)\} ]+\sqrt{nh}A_{\textcolor{black}{\theta_0}}(\theta_0-\theta)+o(1).
		\end{align*}
		As $\hat\theta_w$ solves (\ref{Ch4: ee theta}),
		\begin{align*}
			\sqrt{nh}[U_w(\theta_0)-\E\{U_w(\theta_0)\}]+o(1)=\sqrt{nh}A_{\textcolor{black}{\theta_0}}(\hat\theta_w-\theta_0).
		\end{align*}
		Let $\psi_i=\sqrt{h}\iint K_h(t-s)W_i(t,s)\{Y_i(t) - W_i(t,s)^{T}{\theta_0} \}dN_i(t,s)$. Then $\psi_i$s are i.i.d. and $\sqrt{nh}U_w(\theta_0)=n^{-1/2}\sum_{i=1}^n\psi_i$.
		\begin{align*}
			\var(\psi)&=\E[\var\{\psi|W(t,s),N(t,s),t,s \in [0,1]\}]+\var[\E\{\psi|W(t,s),N(t,s),t,s \in [0,1]\}]\\
			&=J_1+J_2,
		\end{align*}
		where
		\begin{align*}
			J_1
			&=h\E\left(\iiiint K_h(t_1-s_1)K_h(t_2-s_2)W(t_1,s_1)W(t_2,s_2)^T\E\{\epsilon (t_1)\epsilon (t_2)\}\right.\\
			&\left.\vphantom{\int}\times dN(t_1,s_1)dN(t_2,s_2)\right)\\
			&=\int K(z)^2d z\int\E\{W(t,t)W(t,t)^T \}\sigma^2(t)\eta(t,t)d t+O(h^2),\\
			J_2&=h\var\left[\iint K_h(t-s)W(t,s)\{W(t,t) - W(t,s)\}^{T}{\theta_0}dN(t,s)\right]\\
			&=h\E\left[\iiiint K_h(t_1-s_1)K_h(t_2-s_2)W(t_1,s_1)\{W(t_1,t_1) - W(t_1,s_1)\}^T \theta_0 \theta_0^T\right.\\
			&\left.\vphantom{\int} \times \{W(t_2,t_2) - W(t_2,s_2)\} W(t_2,s_2)^TdN(t_1,s_1)dN(t_2,s_2) \right]\\
			&-h\left(\iint K_h(t-s)\E[W(t,s)\{W(t,t) - W(t,s)\}^{T}]{\theta_0}dN(t,s)\right)^{\otimes2}\\
			&=O(h).
		\end{align*}
		Thus
		\begin{align*}
			\var(\psi)&=\int K(z)^2d z\int\E\{W(t,t)W(t,t)^T \}\sigma^2(t)\eta(t,t)d t+O_P(h^2):=\Sigma_\theta+O(h).
		\end{align*}
		To prove the asymptotic normality, we verify the Lyapunov condition. Note that
		$$(nh)^{1/2}U_w(\theta_0) = n^{-1/2}\sum_{i=1}^n\psi_i = \sum_{i=1}^n n^{1/2} n^{-1} \psi_i,$$
		then similar to the calculation of variance,
		$$\sum_{i=1}^n E \{\mid n^{1/2} n^{-1} \psi_i - E(n^{1/2} n^{-1} \psi_i) \mid^3\} = nO\{(nh)^{3/2} n^{-3} h^{-2}\} = O\{(nh)^{-1/2}\}.$$
		Consequently,
		\begin{align*}
			\sqrt{nh}[U_w(\theta)-\E\{U_w(\theta_0)\} ]\stackrel{d}{\to}N(0,\Sigma_{\textcolor{black}{\theta_0}}).
		\end{align*}
		As $n^{1/2}h^{5/2}\to 0$, we have
		\begin{align*}
			\sqrt{nh}A_{\textcolor{black}{\theta_0}}(\hat\theta_w-\theta_0)\stackrel{d}{\to}N(0,\Sigma_{\textcolor{black}{\theta_0}}).
		\end{align*}
	\end{proof}

	\section{Additional simulation results}
	In this section, we present additional simulation results of gold standard and with different bandwidths. Table \ref{Ch3: table gold} summarizes simulation results for the fully observed longitudinal process. This means that we know the observed values of $X(\cdot), Y(\cdot)$ and $Z(\cdot)$ at any time point $t.$ This is the best results one can possibly get.
	\begin{table}[!ht]
		\caption{1000 simulation results for inference of $\alpha,\gamma$ and $\beta$ with fully observed data} \label{Ch3: table gold}
		\begin{center}\small
			\scalebox{0.75}{\begin{tabular}{llrrrrrrrrrrrrr}
					\hline
					\multirow{2}*{$E\{Z(t)\}$} & \multirow{2}*{$n$}& \multicolumn{4}{c}{$\alpha$} & \multicolumn{4}{c}{$\gamma$} & \multicolumn{4}{c}{$\beta$}\\
					&&	Bias	&	SD	&	SE	&	CP	&	Bias	&	SD	&	SE	&	CP	&	Bias	&	SD	&	SE	&	CP\\
					\hline
					2 & 100 &$0.009$ & $0.214$ & $0.199$ & $93$ & $-0.003$ & $0.089$ & $0.084$ & $93$ & $-0.001$ & $0.083$ & $0.081$ & $94$	\\
					& $400$ & $-0.004$ & $0.102$ & $0.103$ & $95$ & $0.001$ & $0.043$ & $0.043$ & $95$ & $0.001$ & $0.042$ & $0.041$ & $93$ \\ 
					& $900$ & $-0.001$ & $0.068$ & $0.069$ & $96$ & $-0.000$ & $0.029$ & $0.029$ & $95$ & $0.000$ & $0.028$ & $0.028$ & $95$ \\ 
					\hline
					$0.5+t$	& $100$ & $0.001$ & $0.134$ & $0.132$ & $94$ & $0.001$ & $0.081$ & $0.080$ & $94$ & $-0.001$ & $0.084$ & $0.081$ & $94$	\\
					& $400$ & $-0.000$ & $0.066$ & $0.067$ & $95$ & $0.002$ & $0.041$ & $0.041$ & $94$ & $0.000$ & $0.042$ & $0.042$ & $94$\\
					& $900$ & $-0.002$ & $0.046$ & $0.045$ & $94$ & $0.001$ & $0.027$ & $0.027$ & $95$ & $0.001$ & $0.027$ & $0.028$ & $95$ \\
					\hline
					$0.5+t^2$	& $100$ & $-0.002$ & $0.134$ & $0.125$ & $92$ & $0.004$ & $0.083$ & $0.079$ & $93$ & $-0.003$ & $0.085$ & $0.081$ & $93$ 	\\
					&$400$ & $0.002$ & $0.063$ & $0.064$ & $95$ & $-0.001$ & $0.041$ & $0.040$ & $94$ & $0.001$ & $0.041$ & $0.041$ & $94$ \\ 
					&$900$ & $0.0002$ & $0.044$ & $0.043$ & $95$ & $0.0004$ & $0.027$ & $0.027$ & $94$ & $-0.0003$ & $0.028$ & $0.028$ & $95$ \\  
					\hline
					$2\sin(2\pi t)$ & $100$ & $0.003$ & $0.113$ & $0.110$ & $94$ & $0.000$ & $0.037$ & $0.037$ & $94$ & $0.003$ & $0.085$ & $0.082$ & $94$ \\
					& $400$ & $-0.000$ & $0.056$ & $0.056$ & $94$ & $-0.001$ & $0.019$ & $0.019$ & $94$ & $-0.000$ & $0.041$ & $0.042$ & $94$ \\
					& $900$ & $-0.001$ & $0.038$ & $0.037$ & $95$ & $0.000$ & $0.012$ & $0.013$ & $95$ & $0.000$ & $0.028$ & $0.028$ & $96$ \\ 
					\hline
			\end{tabular}}
		\end{center}
		\footnotesize{Note: ``Bias" is the empirical bias, ``SD'' is the sample standard deviation, ``SE'' is the average of the standard error estimates, ``CP''$/100$ represents the coverage probability of the $95\%$ confidence interval for $\hat{\beta}$.}
	\end{table}
	
	We summarize simulation results with bandwidth $h=n^{-0.7}$ and $h=n^{-0.8}$ in Table \ref{Ch3: table beta h=n^-.7} and Table \ref{Ch3: table beta h=n^-.8} respectively. We can see that different choices of bandwidths do not affect inference of $\hat{\beta}_p$ and $\hat{\beta}_c$ much. 
	\begin{table}[!ht]
		\caption{1000 simulation results for inference of $\beta$ with $h=n^{-0.7}$}\label{Ch3: table beta h=n^-.7}
		\begin{center}\small
			\scalebox{0.8}{\begin{tabular}{lrrrrrrrrrrrr}
					\hline
					& \multicolumn{4}{c}{Na\"ive} & \multicolumn{4}{c}{PLM} & \multicolumn{4}{c}{Centering}\\
					&	Bias	&	SD	&	SE	&	CP	&	Bias	&	SD	&	SE	&	CP	&	Bias	&	SD	&	SE	&	CP\\
					\hline
					\multicolumn{3}{l}{ Independent covariates} \\
					\multicolumn{3}{l}{ $E\{Z(t)\}=2$} \\
					$100$ & $0.002$ & $0.116$ & $0.114$ & $93$ & $0.002$ & $0.122$ & $0.117$ & $94$ & $0.002$ & $0.122$ & $0.117$ & $94$ \\ 
					$400$ & $0.002$ & $0.055$ & $0.058$ & $95$ & $0.002$ & $0.058$ & $0.060$ & $96$ & $0.002$ & $0.058$ & $0.060$ & $95$ \\ 
					$900$ & $-0.0002$ & $0.040$ & $0.039$ & $94$ & $-0.0004$ & $0.041$ & $0.040$ & $94$ & $-0.0004$ & $0.041$ & $0.040$ & $94$ \\ 
					\multicolumn{3}{l}{ $E\{Z(t)\}=0.5+t$} \\
					$100$ & $-0.064$ & $0.117$ & $0.114$ & $89$ & $0.001$ & $0.121$ & $0.116$ & $93$ & $0.001$ & $0.121$ & $0.116$ & $93$ \\ 
					$400$ & $-0.061$ & $0.061$ & $0.058$ & $81$ & $0.003$ & $0.063$ & $0.060$ & $94$ & $0.003$ & $0.063$ & $0.060$ & $94$ \\ 
					$900$ & $-0.063$ & $0.040$ & $0.039$ & $62$ & $0.0001$ & $0.042$ & $0.040$ & $94$ & $0.00004$ & $0.042$ & $0.040$ & $94$ \\ 
					\multicolumn{3}{l}{ $E\{Z(t)\}=0.5+t^2$} \\
					$100$ & $-0.059$ & $0.117$ & $0.114$ & $91$ & $0.003$ & $0.124$ & $0.116$ & $92$ & $0.003$ & $0.124$ & $0.116$ & $92$ \\ 
					$400$ & $-0.058$ & $0.060$ & $0.058$ & $83$ & $0.002$ & $0.062$ & $0.060$ & $94$ & $0.002$ & $0.062$ & $0.060$ & $94$ \\ 
					$900$ & $-0.059$ & $0.040$ & $0.039$ & $66$ & $0.001$ & $0.040$ & $0.040$ & $95$ & $0.001$ & $0.040$ & $0.040$ & $95$ \\ 
					\multicolumn{3}{l}{ $E\{Z(t)\}=2\sin(2\pi t)$} \\
					$100$ & $0.234$ & $0.136$ & $0.132$ & $57$ & $0.002$ & $0.122$ & $0.116$ & $94$ & $0.002$ & $0.122$ & $0.116$ & $94$ \\ 
					$400$ & $0.231$ & $0.067$ & $0.068$ & $6$ & $0.002$ & $0.060$ & $0.059$ & $95$ & $0.002$ & $0.060$ & $0.059$ & $95$ \\ 
					$900$ & $0.230$ & $0.046$ & $0.045$ & $0$ & $0.002$ & $0.040$ & $0.040$ & $95$ & $0.002$ & $0.040$ & $0.040$ & $95$ \\ 
					\hline
					\multicolumn{3}{l}{ Uncorrelated covariates} \\
					\multicolumn{3}{l}{ $E\{Z(t)\}=2$} \\
					$100$ & $-0.007$ & $0.219$ & $0.190$ & $90$ & $-0.008$ & $0.231$ & $0.195$ & $89$ & $-0.008$ & $0.231$ & $0.196$ & $89$ \\ 
					$400$ & $0.005$ & $0.111$ & $0.103$ & $92$ & $0.005$ & $0.117$ & $0.107$ & $92$ & $0.005$ & $0.117$ & $0.107$ & $92$ \\ 
					$900$ & $0.002$ & $0.071$ & $0.070$ & $95$ & $0.002$ & $0.074$ & $0.073$ & $94$ & $0.002$ & $0.074$ & $0.073$ & $94$ \\ 
					\multicolumn{3}{l}{ $E\{Z(t)\}=0.5+t$} \\
					$100$ & $-0.066$ & $0.206$ & $0.190$ & $90$ & $0.001$ & $0.217$ & $0.194$ & $91$ & $0.001$ & $0.217$ & $0.195$ & $91$ \\ 
					$400$ & $-0.065$ & $0.106$ & $0.104$ & $89$ & $-0.001$ & $0.112$ & $0.107$ & $94$ & $-0.001$ & $0.112$ & $0.107$ & $94$ \\ 
					$900$ & $-0.064$ & $0.070$ & $0.070$ & $85$ & $-0.001$ & $0.073$ & $0.073$ & $95$ & $-0.001$ & $0.073$ & $0.073$ & $95$ \\ 
					\multicolumn{3}{l}{ $E\{Z(t)\}=0.5+t^2$} \\
					$100$ & $-0.063$ & $0.205$ & $0.192$ & $91$ & $0.002$ & $0.215$ & $0.197$ & $90$ & $0.002$ & $0.214$ & $0.197$ & $91$ \\ 
					$400$ & $-0.066$ & $0.110$ & $0.104$ & $88$ & $-0.006$ & $0.116$ & $0.107$ & $92$ & $-0.006$ & $0.116$ & $0.107$ & $92$ \\ 
					$900$ & $-0.059$ & $0.072$ & $0.070$ & $85$ & $0.002$ & $0.076$ & $0.073$ & $94$ & $0.002$ & $0.076$ & $0.073$ & $94$ \\ 
					\multicolumn{3}{l}{ $E\{Z(t)\}=2\sin(2\pi t)$} \\
					$100$ & $0.245$ & $0.218$ & $0.206$ & $74$ & $0.003$ & $0.218$ & $0.194$ & $90$ & $0.003$ & $0.218$ & $0.194$ & $91$ \\ 
					$400$ & $0.227$ & $0.113$ & $0.111$ & $46$ & $-0.005$ & $0.110$ & $0.107$ & $94$ & $-0.005$ & $0.110$ & $0.107$ & $94$ \\ 
					$900$ & $0.232$ & $0.075$ & $0.075$ & $13$ & $0.002$ & $0.075$ & $0.072$ & $93$ & $0.002$ & $0.075$ & $0.072$ & $93$ \\ 
					\hline
			\end{tabular}}
		\end{center}
		\footnotesize{Note: ``Bias" is the empirical bias, ``SD'' is the sample standard deviation, ``SE'' is the average of the standard error estimates, ``CP''$/100$ represents the coverage probability of the $95\%$ confidence interval for $\hat{\beta}$.}
	\end{table}
	
	\begin{table}[!ht]
		\caption{1000 simulation results for inference of $\beta$ with $h=n^{-0.8}$}\label{Ch3: table beta h=n^-.8}
		\begin{center}\small
			\scalebox{0.8}{\begin{tabular}{lrrrrrrrrrrrr}
					\hline
					& \multicolumn{4}{c}{Na\"ive} & \multicolumn{4}{c}{PLM} & \multicolumn{4}{c}{Centering}\\
					&	Bias	&	SD	&	SE	&	CP	&	Bias	&	SD	&	SE	&	CP	&	Bias	&	SD	&	SE	&	CP\\
					\hline
					\multicolumn{3}{l}{ Independent covariates} \\
					\multicolumn{3}{l}{ $E\{Z(t)\}=2$} \\
					$100$ & $0.005$ & $0.109$ & $0.113$ & $95$ & $0.005$ & $0.114$ & $0.114$ & $94$ & $0.005$ & $0.114$ & $0.114$ & $94$ \\ 
					$400$ & $0.002$ & $0.056$ & $0.058$ & $95$ & $0.003$ & $0.059$ & $0.059$ & $95$ & $0.003$ & $0.059$ & $0.059$ & $95$ \\ 
					$900$ & $-0.001$ & $0.038$ & $0.039$ & $95$ & $-0.001$ & $0.040$ & $0.040$ & $94$ & $-0.001$ & $0.040$ & $0.040$ & $94$ \\ 
					\multicolumn{3}{l}{ $E\{Z(t)\}=0.5+t$} \\
					$100$ & $-0.067$ & $0.120$ & $0.114$ & $88$ & $-0.003$ & $0.125$ & $0.114$ & $92$ & $-0.003$ & $0.125$ & $0.114$ & $92$ \\ 
					$400$ & $-0.063$ & $0.061$ & $0.058$ & $80$ & $0.001$ & $0.064$ & $0.059$ & $91$ & $0.001$ & $0.064$ & $0.059$ & $92$ \\ 
					$900$ & $-0.064$ & $0.041$ & $0.039$ & $63$ & $-0.001$ & $0.043$ & $0.040$ & $94$ & $-0.001$ & $0.043$ & $0.040$ & $94$ \\ 
					\multicolumn{3}{l}{ $E\{Z(t)\}=0.5+t^2$} \\
					$100$ & $-0.062$ & $0.116$ & $0.114$ & $90$ & $-0.0001$ & $0.119$ & $0.115$ & $93$ & $-0.0002$ & $0.119$ & $0.115$ & $93$ \\ 
					$400$ & $-0.061$ & $0.060$ & $0.058$ & $81$ & $-0.001$ & $0.061$ & $0.059$ & $94$ & $-0.001$ & $0.061$ & $0.059$ & $94$ \\ 
					$900$ & $-0.058$ & $0.040$ & $0.039$ & $69$ & $0.003$ & $0.042$ & $0.040$ & $94$ & $0.003$ & $0.042$ & $0.040$ & $94$ \\ 
					\multicolumn{3}{l}{ $E\{Z(t)\}=2\sin(2\pi t)$} \\
					$100$ & $0.232$ & $0.137$ & $0.133$ & $58$ & $0.001$ & $0.124$ & $0.114$ & $93$ & $0.001$ & $0.123$ & $0.115$ & $93$ \\ 
					$400$ & $0.227$ & $0.068$ & $0.068$ & $8$ & $-0.001$ & $0.061$ & $0.059$ & $93$ & $-0.001$ & $0.061$ & $0.059$ & $93$ \\ 
					$900$ & $0.228$ & $0.046$ & $0.045$ & $0$ & $-0.0001$ & $0.040$ & $0.040$ & $94$ & $-0.0001$ & $0.040$ & $0.040$ & $94$ \\ 
					\hline
					\multicolumn{3}{l}{ Uncorrelated covariates} \\
					\multicolumn{3}{l}{ $E\{Z(t)\}=2$} \\
					$100$ & $-0.002$ & $0.211$ & $0.189$ & $91$ & $-0.001$ & $0.225$ & $0.192$ & $90$ & $-0.001$ & $0.225$ & $0.192$ & $90$ \\ 
					$400$ & $-0.004$ & $0.105$ & $0.103$ & $93$ & $-0.004$ & $0.111$ & $0.106$ & $92$ & $-0.004$ & $0.111$ & $0.106$ & $92$ \\ 
					$900$ & $-0.004$ & $0.072$ & $0.070$ & $94$ & $-0.005$ & $0.075$ & $0.072$ & $94$ & $-0.005$ & $0.075$ & $0.072$ & $94$ \\ 
					\multicolumn{3}{l}{ $E\{Z(t)\}=0.5+t$} \\
					$100$ & $-0.059$ & $0.207$ & $0.186$ & $90$ & $0.008$ & $0.219$ & $0.188$ & $88$ & $0.008$ & $0.218$ & $0.188$ & $88$ \\ 
					$400$ & $-0.065$ & $0.108$ & $0.104$ & $88$ & $0.0001$ & $0.112$ & $0.106$ & $92$ & $0.0001$ & $0.112$ & $0.106$ & $92$ \\ 
					$900$ & $-0.065$ & $0.070$ & $0.071$ & $85$ & $-0.001$ & $0.073$ & $0.072$ & $95$ & $-0.001$ & $0.073$ & $0.072$ & $95$ \\ 
					\multicolumn{3}{l}{ $E\{Z(t)\}=0.5+t^2$} \\
					$100$ & $-0.057$ & $0.203$ & $0.190$ & $90$ & $0.006$ & $0.213$ & $0.191$ & $90$ & $0.006$ & $0.213$ & $0.192$ & $90$ \\ 
					$400$ & $-0.063$ & $0.108$ & $0.104$ & $88$ & $-0.002$ & $0.114$ & $0.106$ & $93$ & $-0.002$ & $0.114$ & $0.106$ & $93$ \\ 
					$900$ & $-0.059$ & $0.071$ & $0.071$ & $86$ & $0.001$ & $0.075$ & $0.072$ & $94$ & $0.001$ & $0.075$ & $0.072$ & $94$ \\ 
					\multicolumn{3}{l}{ $E\{Z(t)\}=2\sin(2\pi t)$} \\
					$100$ & $0.239$ & $0.229$ & $0.208$ & $74$ & $-0.001$ & $0.216$ & $0.195$ & $90$ & $-0.001$ & $0.216$ & $0.195$ & $90$ \\ 
					$400$ & $0.231$ & $0.118$ & $0.112$ & $46$ & $-0.003$ & $0.116$ & $0.106$ & $92$ & $-0.003$ & $0.116$ & $0.106$ & $92$ \\ 
					$900$ & $0.234$ & $0.080$ & $0.076$ & $15$ & $0.003$ & $0.077$ & $0.072$ & $94$ & $0.003$ & $0.077$ & $0.072$ & $94$ \\ 
					\hline
			\end{tabular}}
		\end{center}
		\footnotesize{Note: ``Bias" is the empirical bias, ``SD'' is the sample standard deviation, ``SE'' is the average of the standard error estimates, ``CP''$/100$ represents the coverage probability of the $95\%$ confidence interval for $\hat{\beta}$.}
	\end{table}

\end{document}